\documentclass{siamart1116}
\usepackage{amsfonts,amsmath,amssymb}
\usepackage{mathrsfs,mathtools}
\usepackage{enumerate}
\usepackage{hyperref}
\usepackage{esint}
\usepackage{graphicx}
\usepackage{bm}
\usepackage{commath}
\usepackage{esint}
\usepackage{algorithm}
\usepackage{algpseudocode}
\DeclareMathAlphabet{\mathpzc}{OT1}{pzc}{m}{it}

\def\dif{\:\mathrm{d}}

\def\C{\mathcal{C}}

\def\0{\emptyset}

\def\tr{\mathrm{tr}_\Omega\:}

\def\dif{\:\mathrm{d}}

\def\dif{\:\mathrm{d}}

\pagestyle{myheadings}
\markboth{H. Antil and
C.N. Rautenberg}{Non-Muckenhoupt weights, fractional elliptic operators, and applications}


\theoremstyle{definition}
\newtheorem{thm}{Theorem}
\newtheorem{rem}[theorem]{Remark}
\newtheorem{exa}[thm]{Example}


\numberwithin{equation}{section}


\usepackage[textsize=small]{todonotes}
\setlength{\marginparwidth}{2cm}

\title{Sobolev spaces with non-Muckenhoupt weights, fractional elliptic operators, and applications
\thanks{HA has been partially supported by  NSF grant DMS-1521590. 
 CNR has been supported via the framework of {\sc Matheon} by the Einstein Foundation Berlin within the ECMath project SE15/SE19 and acknowledges the
support of the DFG through the DFG-SPP 1962: Priority Programme ``Non-smooth and Complementarity-based Distributed Parameter Systems: Simulation and Hierarchical Optimization''  within Project 11}
}

\author{Harbir Antil\thanks{Department of Mathematical Sciences, George Mason University, Fairfax, VA 22030, USA. \texttt{hantil@gmu.edu}},
\and
Carlos N. Rautenberg\thanks{Weierstrass Institute
for Applied Analysis and Stochastics, Mohrenstr. 39, 10117 Berlin, Germany. \texttt{rautenberg@wias-berlin.de}\and Department of Mathematics, Humboldt-University of Berlin, Unter den Linden 6, 10099 Berlin, Germany. \texttt{carlos.rautenberg@math.hu-berlin.de}}
}


\begin{document}

\maketitle

\begin{abstract}
We propose a new variational model 
in weighted Sobolev spaces with non-standard weights and  applications to image processing. We show that these weights 
are, in general, not of Muckenhoupt type and therefore the classical analysis tools may not apply. For special cases of the weights,
the resulting variational problem is known to be equivalent to the fractional Poisson 
problem. The trace space for the weighted Sobolev space is identified to be embedded in a weighted $L^2$ 
space.  We propose a finite element scheme to solve the Euler-Lagrange 
equations, and for the image denoising application we propose an algorithm to identify the 
unknown weights. The approach is illustrated on several test problems
and it yields better results when compared to the existing total variation techniques. 
\end{abstract}

\begin{keywords}
 Variable weights, non Muckenhoupt weights, new trace theorem, fractional Laplacian with variable exponent, image denoising.
\end{keywords}

\begin{AMS}
 35S15, 
 26A33, 
 65R20, 
 65N30  
\end{AMS}

\section{Introduction}

In this paper we consider weighted Sobolev spaces where the weight $w$ is \emph{not} necessarely of Muckenhoupt type, and an associated variational model with concrete applications to image processing that shows advantageous features of such weights. The particular weights $w$ that we consider in this paper are closely related to fractional Sobolev spaces of differentiability order $s\in (0,1)$ and the fractional spectral Laplacian $(-\Delta)^s$, and are further motivated by considering a spatially dependent order $x\mapsto s(x)$. 

Weighted Sobolev spaces have been a topic of intensive study for around 60 years with a variety of focuses; we refer the readers to the monographs \cite{MR926688,MR802206,BOTuresson_2000a} and references within, and further \cite{MR3185296,MR2491902,MR1669639,MR775568,ANekvinda_1993a} for an introduction to the subject. In particular, a source of interest for such spaces is that they represent the correct solution space for several degenerate elliptic partial differential equations (\cite{MR926688}) and problems in potential theory (\cite{BOTuresson_2000a}). Recently, the topic has received a new impulse mainly associated with the extension result of Caffarelli-Silvestre; \cite{LCaffarelli_LSilvestre_2007a} (Stinga-Torrea; \cite{PRStinga_JLTorrea_2010a}) for fractional elliptic partial differential operators. In this setting, for example, the solution to $(-\Delta)^su=f$ in the fractional Sobolev space $H^s(\Omega)$  endowed with zero boundary conditions, can be equivalently obtained as (the restriction to $\Omega$ of) the solution of a PDE with a non-fractional elliptic operator in a weighted Sobolev space with weight $w(x,y)=y^{1-2s}$ in the extended domain $\mathcal{C}=\{(x,y)\in \Omega\times (0,+\infty)\}$.

The type of weights $w$ that have been more thoroughly studied correspond to two main classes: 1) weights belonging to some Muckenhoupt class; see \cite{MR2491902,MR1669639} and 2) composition of functions (mainly power functions) with the distance function to a particular set; see \cite{MR802206, ANekvinda_1993a}. In these cases, several important questions like density of smooth functions and characterization of traces have been, at least partially, answered; see \cite{MR3185296,MR2491902,MR802206,ANekvinda_1993a}. In this paper we consider weights of the type $w(x,y)=y^{1-2s(x)}$ on $\mathcal{C}=\{(x,y)\in \Omega\times (0,+\infty)\}$, which are, in general, neither of 1) or 2) type.

The variational problem of interest in this paper is closely related to fractional elliptic operators. The spike of research interest on problems with this type of operators is mainly due to the results from Caffarelli and collaborators; see \cite{PRStinga_JLTorrea_2010a,LCaffarelli_LSilvestre_2007a,LACaffarelli_PRStinga_2016a}. Following the renowned  results in \cite{LCaffarelli_LSilvestre_2007a}, a large number of contributions on modeling, numerical methods, applications, regularity results, different boundary conditions, and control problems, among others have been considered. 

Recently, for image denoising a model was 
proposed in \cite{HAntil_SBartels_2017a}. The variational problem can be formulated in our setting as 
\begin{equation}\label{eq:s_fix_intro}
 \min_{u} \frac{1}{2}\|(-\Delta)^{\frac{s}{2}} v\|_{L^2(\Omega)}^2 
     + \frac{\zeta}{2} \|v-f\|^2_{L^2(\Omega)}  ,
\end{equation}
where $(-\Delta)^s$ denotes the fractional power of the Laplacian with zero Neumann boundary
conditions and  $s \in (0,1)$; see \cite{LACaffarelli_PRStinga_2016a,antil2017fractional}. Additionally, $\zeta>0$ is a given constant, and $f=u_{\mathrm{true}}+\mathrm{noise}$ where $u_{\mathrm{true}}$ is the desired target of the optimization procedure. The choice of $s$ has a direct consequence on the global regularity of the solution to problem \eqref{eq:s_fix_intro}. However, it is desirable, from the reconstruction point of view, that the regularity of the solution to \eqref{eq:s_fix_intro} is low in places in $\Omega$ where edges or discontinuities are present in $u_{\mathrm{true}}$, and that is high in places where  $u_{\mathrm{true}}$ is smooth or contains homogeneous features. Hence, it is of interest to consider \eqref{eq:s_fix_intro} where $s:\Omega\to [0,1]$ is not a constant.  The first main roadblock in letting $s$ to be spatially dependent is the fact that 
there is no obvious way to define $(-\Delta)^{s(x)}$. In fact we will not attempt to 
do so in this paper, we leave this as an open question. 
Instead of \eqref{eq:s_fix_intro} we consider the following variational problem:
\begin{equation}\label{eq:s_intro}
\min_{u} \frac{1}{2}\int_{\C}y^{1-2s(x)} \left(\theta |u|^2 + |\nabla u|^2 \right) \dif x\dif y+ \frac{\mu}{2}\int_{\Omega} s(x)^2 |u(x,0) - f| ^2  \dif x,
\end{equation}
where $\mathcal{C}=\{(x,y)\in \Omega\times (0,+\infty)\}$, $0<\theta \ll 1$,   $\mu>0$ is a given constant, and $u(x,0)$ stands for the trace of $u$ at $\Omega\times\{0\}$. { For the solution $u$ of this optimization problem, we expect that $u(\cdot,0)\simeq u_{\mathrm{true}}$}. This problem is the focus of the present paper and the full motivational link between the problems \eqref{eq:s_fix_intro} and \eqref{eq:s_intro}, is given in the following section.

We next summarize the main contributions of this paper and discuss how the paper is organized in what follows. 

In section \ref{motivation}, we provide a rigorous motivation for the study of problem \eqref{eq:s_intro} by showing that  \eqref{eq:s_intro} represents a generalization of \eqref{eq:s_fix_intro}, after using the Caffarelli-Silvestre extension, in the case of $s$ non-constant. For a constant $s \in (0,1)$ the definition of fractional Laplacian in terms of the
Caffarelli-Silvestre or the Stinga-Torrea extension is by now well-known. However such a result remains open 
when $s(x) \in [0,1]$ for $x\in \Omega$. Towards this end we emphasize that 
we have formulated our problem \eqref{eq:s_intro} on an unbounded domain $\C=\Omega\times (0,+\infty)$, and not directly over $\Omega$.

In order to handle varying 
$s$ in space, for almost all (f.a.a) $x \in \Omega$ we assume $s(x) \in [0,1]$, and consider weighted Sobolev with weights $w(x,y)=y^{1-2s(x)}$ on $\mathcal{C}$. This class of Sobolev spaces is studied in section \ref{s:rest} and  we establish also two fundamental results. The first one is that since we allow 
$s(x) =0$, for some $x\in\Omega$, the weight $y^{1-2s(x)}$ is not, in general, in the Muckenhoupt class $A_2$
(cf. Proposition~\ref{eq:NoA2}). Due to the lack of this $A_2$ property we cannot use some of the 
existing machinery on weighted Sobolev spaces. In particular, the density of smooth functions is not always given in weighted spaces in the absence of the properties inherited by the Muckenhoupt class.  The second important result of section \ref{s:rest} characterizes the trace space for this class of weighted Sobolev spaces, this is given in Theorem \ref{thm:trace}, and it is established that the trace space embeds continuously on the $s$-weighted $L^2$ space. In addition, we  give examples of functions with discontinuous traces in Example \ref{exampledisc}. 

In section \ref{s:prob}, we establish properties for the variational problem of interest \eqref{eq:s_intro}, and a procedure for the selection of the function $s$. We finalize the paper with section \ref{s:num_method}, where we develop a discretization scheme, and provide numerical tests that positively compared with other methods  for image reconstruction. In particular, in section~\ref{s:disc} we introduce a finite element method for \eqref{eq:ext_var_intro} on a bounded domain $\C_\tau := \C \times (0,\tau)$. 
Since our targeted application is image denosing so we first state an 
Algorithm to determine $s$ in Algorithm~\ref{Algorithm}. 
We apply this approach to several prototypical examples in section~\ref{s:num_ex}.
In all cases we obtain better results when compared to the Total Variation (TV) approach.

\subsection{Motivation and some notation}\label{motivation}

Our point of departure for motivation to study the proposed variational model (and associated function space properties) is Figure~\ref{f:ex_2_diffview_intro} 
\begin{figure}[h!]
\centering
\hspace{-1.8cm}
\includegraphics[width=0.4\textwidth]{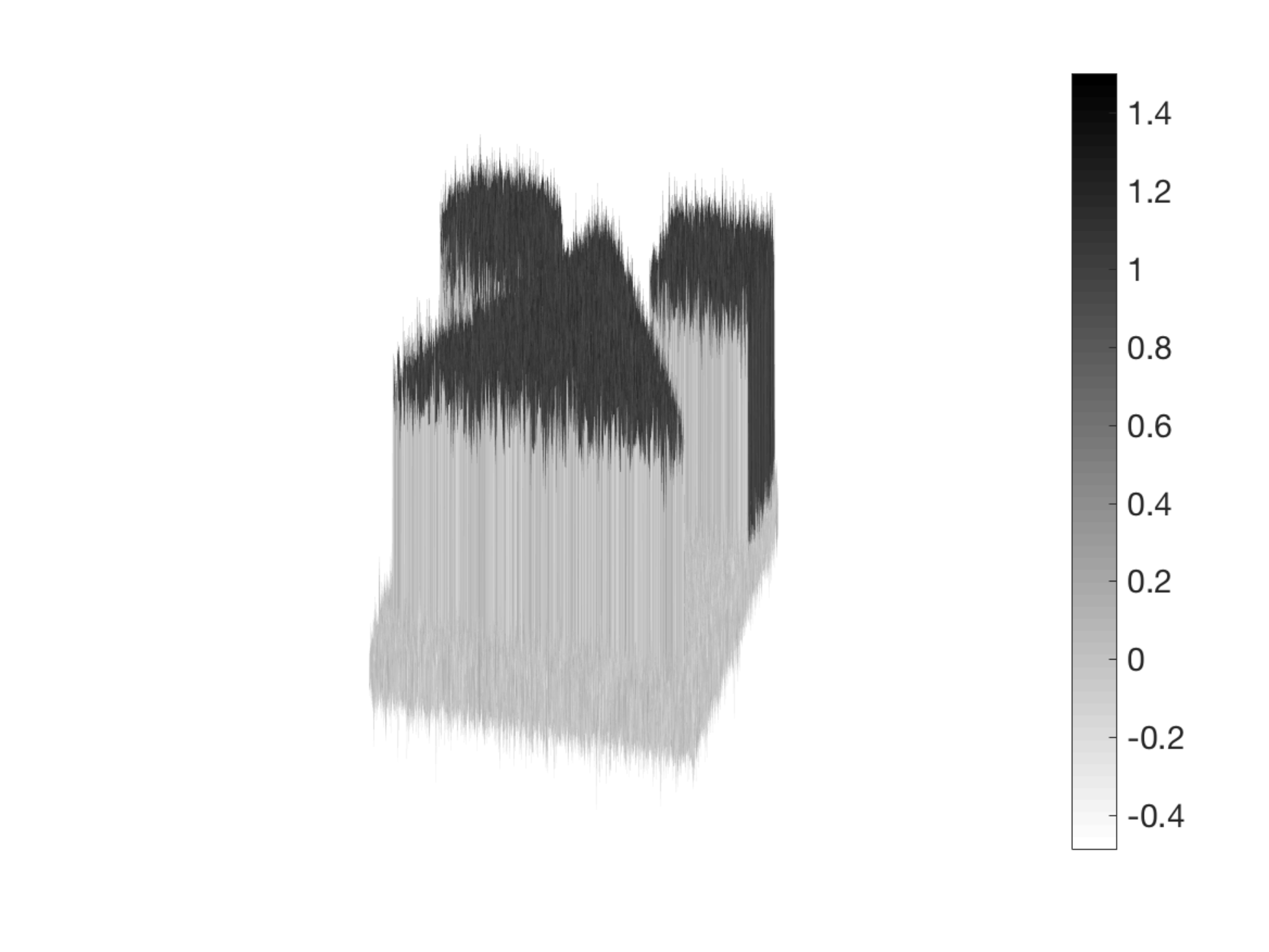}
\hspace{-0.59cm}
\includegraphics[width=0.4\textwidth]{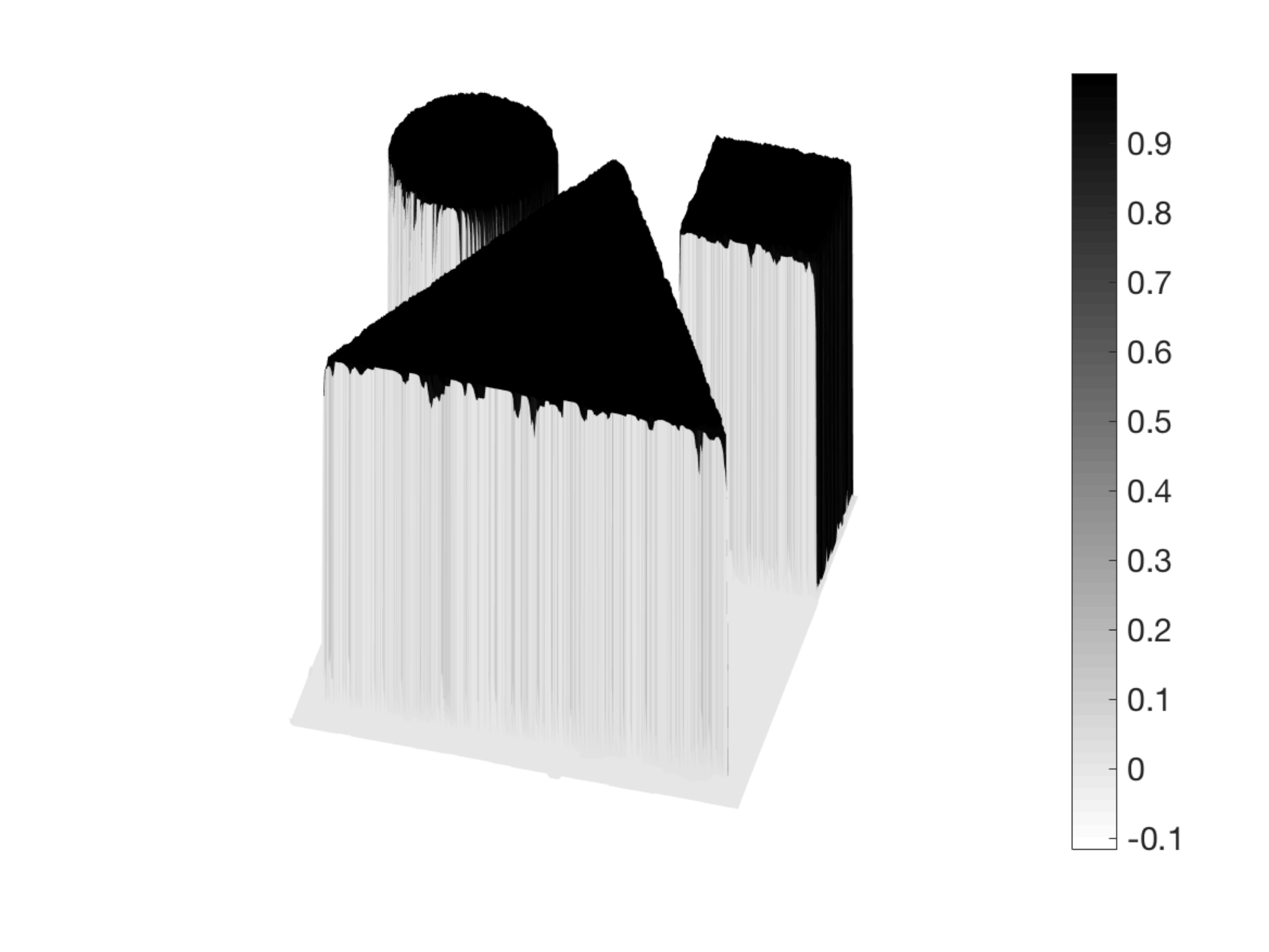}
\hspace{-0.62cm}
\includegraphics[width=0.4\textwidth]{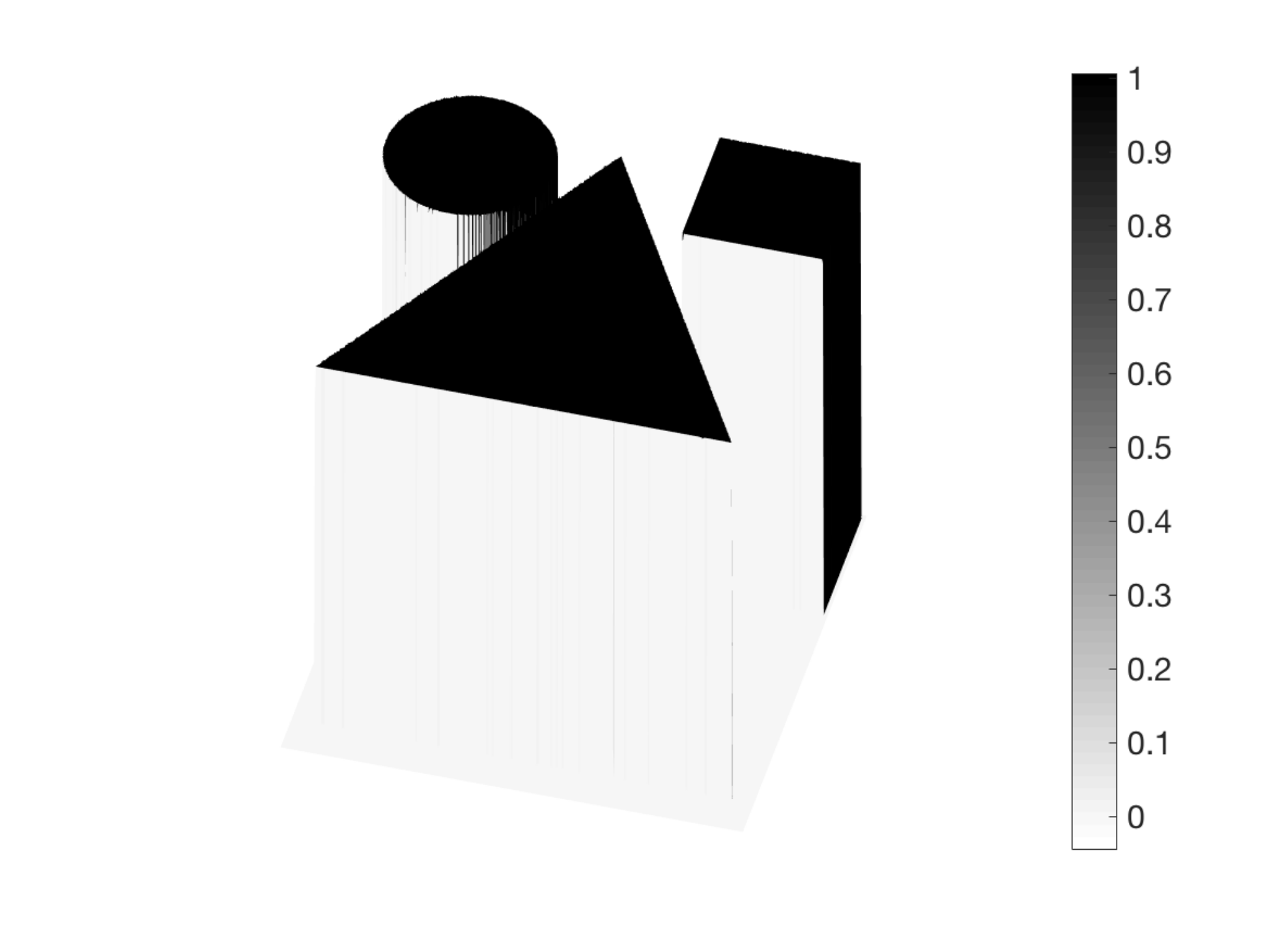}
\hspace{-0.8cm}
\caption{\label{f:ex_2_diffview_intro} Example 1. Left panel: noisy image. Middle panel: reconstruction
using Total Variation (TV) approach \cite{AChambolle_2004a}. Right panel: reconstruction using 
the approach introduced in this paper. 
}
\end{figure}
where the left panel 
shows a noisy image, the middle panel shows the reconstruction using the Total Variation (TV) model, explained in what follows,
(we use piecewise linear finite element discretization for the TV model and we stop
the algorithm when the relative error between the consecutive iterates is smaller than $1e$-8.), and the rightmost panel shows the reconstruction using 
the new framework introduced in this paper and briefly explained in this section. The Rudin-Osher-Fatemi (ROF) model, also called the  
TV model, was introduced  in \cite{LIRudin_SOsher_EFatemi_1992a} and is given by
\begin{equation}\label{eq:tv_intro}
 \min_{u \in {\rm BV}(\Omega)}  \int_{\Omega}|D u| + \frac{\zeta}{2} \|u-f\|^2_{L^2(\Omega)} ,
\end{equation}
where $\Omega \subset \mathbb{R}^N$ with $N \ge 1$ 
is a open bounded Lipschitz domain, and $f \in L^2(\Omega)$ is given by $f=u_{\mathrm{true}}+\xi$, where $\xi$ is the ``noise''. Moreover, 
$\zeta > 0$ is the regularization parameter, and  
 $\int_{\Omega}|D u|$ denotes the total variation seminorm of $u$ on $\Omega$ 
; see \cite{HAttouch_GButtazzo_GMichaille_2014a}.  The parameter $\zeta > 0$ in Figure~\ref{f:ex_2_diffview_intro} is chosen so that it maximizes a weighted sum of the Peak Signal to 
Noise Ratio (PSNR) and the Structural Similarity index (SSIM); as in \cite{MHintermueller_CNRautenberg_TWu_ALanger_2017a}. These two indexes measure with different criteria how close the solution $u_{\rm tv}$ to \eqref{eq:tv_intro} is to $u_{\mathrm{true}}$; see \cite{wang2004image}.
%
The fundamental question in such applications is:  \emph{Is it possible to capture the sharp 
transitions (edges) across the interfaces while removing the undesirable noise from remainder
of the domain.} 
%
Such a question is not limited to image denoising but is fundamental to many applications
in science and engineering. For instance multiphase flows (diffuse interface models),
fracture mechanics, image segmentation etc. 
Even though the ROF model has been extremely successful in practice, two main drawbacks are observed in reconstructions: 1) loss of contrast is always present 2) some corners tend to be rounded  (cf.~Figure~\ref{f:ex_2_diffview_intro}, 
middle panel).

The problem \eqref{eq:s_fix_intro} proposed in \cite{HAntil_SBartels_2017a} is obtained if  $\int_{\Omega}|D \cdot |$ is replaced by $\frac{1}{2}\|(-\Delta)^{\frac{s}{2}} \cdot\|_{L^2(\Omega)}^2$  
where $(-\Delta)^s$ denotes the fractional powers of the Laplacian with zero Neumann boundary
conditions, for $s\in (0,1)$. Solutions to \eqref{eq:s_fix_intro} are given in $H^s(\Omega)$, the fractional Sobolev space given by
\begin{equation*}
H^s(\Omega)=\left\{ v\in L^2(\Omega): \int_\Omega\int_\Omega \frac{|v(x)-v(y)|^2}{|x-y|^{n+2s}} \dif x\dif y<+\infty\right\}.
\end{equation*}
The minimization problem 
\eqref{eq:s_fix_intro} is appealing, in contrast to \eqref{eq:tv_intro},  as the Euler-Lagrange equations are linear in $u$, i.e., if $u$ solves \eqref{eq:s_fix_intro}, then equivalently it solves
\begin{equation}\label{eq:EL_s_fix}
  \langle (-\Delta)^s u, v\rangle+ \zeta (u-f, v)  = 0,
\end{equation}
for all $v\in H^s(\Omega)$. Both \eqref{eq:s_fix_intro} and hence \eqref{eq:EL_s_fix} have unique solutions 
in $H^s(\Omega)$. This can be easily deduced from the definition of Neumann fractional Laplacian 
and the equivalence between the spectral and $H^s(\Omega)$-norms (cf. \cite[Remark~2.5]{antil2017fractional} and \cite{LACaffarelli_PRStinga_2016a}). 
The $H^s(\Omega)$ space further provides flexibility in terms of the regularity by choosing $s$ appropriately. 

The Caffarelli-Silvestre (or Stinga-Torrea) extension technique establishes that if $u\in H^s(\Omega)$ solves \eqref{eq:EL_s_fix}, then there exists $\mathcal{U}\in H(\C;y^{1-2s})$ where $H(\C;y^{1-2s})$ is the weighted $H^1$ Sobolev space on $\C := \Omega \times (0,\infty)$ with weight $w(x,y)=y^{1-2s}$, $(x,y)\in \mathcal{C}$ (see the following section for more details), such that $\tr \mathcal{U}=u$, where $\tr$ operator is the restriction of the trace map to $\Omega$, and $\mathcal{U}$ solves
\begin{align}\label{eq:ext_var_intro0}
\begin{aligned}
 \int_\C y^{1-2s}\nabla \mathcal{U} \cdot \nabla \mathcal{W} & \dif x \dif y
  +  d_s \zeta \int_\Omega  (\tr \mathcal{U}-f)   \:\tr \mathcal{W} \:\dif x=0, 
\end{aligned}                       
\end{align}
for all $\mathcal{W}\in H(\C;y^{1-2s}) $, where  $d_s=2^{1-2s} \frac{\Gamma(1-s)}{\Gamma(s)}$. Therefore, for $s$ a constant such that  $s(x)=s\in (0,1)$ for all $x\in \Omega$, $\theta=0$ and $\zeta := \mu s^2 d_s^{-1}$, if $\mathcal{U}$ solves \eqref{eq:s_intro}, then it equivalently solves \eqref{eq:ext_var_intro0}. This establishes the connection of  \eqref{eq:s_intro} with \eqref{eq:EL_s_fix}, and hence with \eqref{eq:s_fix_intro}.

Further, if $s(\cdot)$ is not a constant and $s:\Omega\to[0,1]$ (with some additional assumptions made explicit in the following section), and if $\mathcal{U}\in H(\C;y^{1-2s(x)})$ solves \eqref{eq:s_intro}, then 
\begin{align}\label{eq:ext_var_intro}
\begin{aligned}
 \int_\C y^{1-2s(x)} \left(\nabla \mathcal{U} \cdot \nabla \mathcal{W} + \theta \mathcal{U} \mathcal{W}\right) & \dif x \dif y
  +  \mu \int_\Omega s(x)^2 (\tr \mathcal{U}-f)   \:\tr \mathcal{W} \:\dif x=0 , 
\end{aligned}                       
\end{align}
for all $\mathcal{W}\in H(\C;y^{1-2s(x)})$; this is rigorously done in section \ref{s:prob} where the key ingredient is the characterization of $\tr$ of $H(\C;y^{1-2s(x)})$. It is clear that for $\theta\ll1$, the above problem (whenever well-posed) represents a generalization of \eqref{eq:ext_var_intro0} and hence of \eqref{eq:s_fix_intro}.

Now we are in shape to explain Figure~\ref{f:ex_2_diffview_intro}. First, note that for $s\simeq 0$, we expect low regularity of solutions, and for $s\simeq 1$ we expect higher regularity of solutions: In fact, for constant $0<s_1< s_2<1$, we observe $H^{s_2}(\Omega)\subset H^{s_1}(\Omega)$.  Then, after a rough identification of edges, we choose $s$ small on edges, and large on other regions; the exact procedure is provided in section \ref{Sselect}. The right image in Figure~\ref{f:ex_2_diffview_intro}, is obtained via this procedure.

\section{Weighted Sobolev space with  variable $s$ and their traces}\label{s:rest}

Let $\Omega \subset\mathbb{R}^N$ with $N \ge 1$ be a non-empty, open, bounded domain 
with Lipschitz boundary $\partial\Omega$. We denote by $\C := \Omega \times (0,\infty)$ a semi-infinite 
cylinder with boundary $\partial_L\C := \partial\Omega \times [0,\infty)$. With $s:\Omega\to\mathbb{R}$ measurable and $s(x) \in [0,1]$ for almost all $x \in \Omega$, we define 
\begin{equation*}
\delta(x) := 1-2s(x) \in [-1,1], \qquad \text{ and } \qquad  w(x,y):=y^{\delta(x)},
\end{equation*}
so that
\begin{equation}\label{eg:intcond}
w, w^{-1}\in L^1_{\mathrm{loc}}(\C).
\end{equation}
We denote $L^2(\C;y^{\delta(x)}):=L^2(\C;w)$, where
\begin{equation*}
L^2(\C;y^{\delta(x)}):=\left\{v:\C\to \mathbb{R}: v \text{ is measurable and } \int_\C y^{\delta(x)} |v|^2 \dif x\dif y<+\infty\right\},
\end{equation*}
which is a well-defined weighted Lebesgue space.

For $u$ smooth, define the (extended real-valued) functional $ \|\cdot\|_{H}$  as
\begin{equation}\label{eq:EnergyNorm}
 \|u\|_{H} 
  := \left( \int_\C y^{\delta(x)} \left(|u|^2 + |\nabla u|^2 \right) \dif x\dif y \right)^{\frac12}.
\end{equation}
Note that if $ \|u\|_{H} <+\infty$, the value of $s$ controls how singular the function $u$ is in the neighborhood of $\Omega\times \{0\}$. Define $\mathcal{A}_\alpha(s):=\{x\in \Omega: s(x)=\alpha, \:\: \text{a.e.}\}$, suppose that for $\alpha=1$ its Lebesgue measure satisfies $|\mathcal{A}_1(s)|>0$, and that  $u$ is a constant in $\mathcal{A}_1(s)\times (0,h)$, then
\begin{equation*}
\|u\|^2_{H} \geq |u|^2|\mathcal{A}_1(s)|\int_0^h y^{-1}\dif y,
\end{equation*}
so that $u$ is zero in $\Omega\times (0,h)$. On the other hand, $u$ is allowed to have a more singular behaviour on $\mathcal{A}_0(s)$.

Consider the set 
\begin{equation*}
\C_\Omega=(\Omega\times \{0\})\cup\C,
\end{equation*}
that is, $\C_\Omega$ is the open cylinder $\C$ together with the $\Omega$ cap,
and denote by $H$, $H_0$, and $W$, or equivalently by  $H(\C;y^{\delta(x)})$, $H_0(\C;y^{\delta(x)})$, and $W(\C;y^{\delta(x)})$, the spaces 
$H(\C;w)$, $H_0(\C;w)$ and $W(\C;w)$ which are defined as
\begin{description}
\item[]
\item[] $H(\C;w)$ is the closure of the set $K:=\{w\in C^\infty(\overline{\C}): \|w\|_H<+\infty\}$ with respect to the $\|\cdot\|_H$ norm.
\item[]
\item[] $H_0(\C;w)$  is the closure of the set  $K_0:=\{w\in C_c^\infty(\C_\Omega): \|w\|_H<+\infty\}$ with respect to the $\|\cdot\|_H$  norm. 
\item[]
\item[] $W(\C;w)$ is the space of maps $u\in L^2(\C;y^{\delta(x)})\cap L_{\mathrm{loc}}^1(\C)$ with distributional gradients $\nabla u$ that satisfy $|\nabla u| \in L^2(\C;y^{\delta(x)})\cap L_{\mathrm{loc}}^1(\C)$.

\item[]
\end{description}

A few words are in order concerning the definition of $H$ given the slight difference from the existing literature: Usually, the definition of $H$ is given over the completion (of finite energy functions) of  $C^\infty(\C)$, in contrast to $C^\infty(\overline{\C})$, with respect to the $H$ norm; see  \cite{MR1669639}. Provided that $\partial\C$ is regular enough, $\C$ is bounded, and $w,w^{-1}$ are bounded above and below ($\epsilon>0$ away from zero) on a neighborhood of $\partial\C$, then the closure in the definition of $H$ can be equivalently taken with $C^\infty(\C)$ or $C^\infty(\overline{\C})$, indistinctly. Since in our case the two last conditions are not satisfied, we consider $H$ the way it is defined here.

First note that since $K_0\subset K$, which follows from $C_c^\infty(\C_\Omega)\subset C^\infty(\overline{\C})$, we have that $H_0\subset H$. The notation of ``$H_0$'' comes from the following: If $v\in K_0$, then $v|_{\partial \Omega \times \{0\}} =0$, so that restrictions of elements of $H_0$ to $\Omega$ have ``zero boundary conditions''. In particular, in the case $s\in (0,1)$ is constant, the latter remark can be taken out of quotation marks and considered in the sense of the trace; see \cite{ACapella_JDavila_LDupaigne_YSire_2011a}. The  condition \eqref{eg:intcond} implies (see \cite{MR1669639,MR775568}) that  the space $W$
is a proper weighted Sobolev space endowed with the $H$-norm.  While it holds that $H(\tilde{\C};\tilde{w})\subset W(\tilde{\C};\tilde{w})$, for general weights $\tilde{w}$ and domains $\tilde{\C}$, it does not necessarily hold that $H(\tilde{\C};\tilde{w})=W(\tilde{\C};\tilde{w})$. For $\tilde{w}=1$, the equality holds provided conditions on $\tilde{\C}$ are given, e.g., if $\tilde{\C}$ is bounded and has a Lipschitz boundary. A sufficient condition for $H(\tilde{\C};\tilde{w})=W(\tilde{\C};\tilde{w})$, when $\tilde{w}$ and $\tilde{\C}$ are benign enough, is that $\tilde{w}$ belongs to the $A_2(\tilde{\C})$  Muckenhoupt class, that is
\begin{equation}\label{eq:M}
\left(\frac{1}{Q}\int_{Q}\tilde{w}  \dif x\dif y\right)\left(\frac{1}{Q}\int_{Q}\tilde{w}^{-1} \dif x\dif y\right)\leq M,
\end{equation}
for some $M>0$ and all open cubes $Q\subset \tilde{\C}$, see \cite{BOTuresson_2000a,JDuoandikoetxea_2001a}.  Further, note that  \eqref{eg:intcond}, does not imply \eqref{eq:M}. Additionally, if $0<\epsilon \leq s\leq 1-\epsilon$ a.e., then $(x,y)\mapsto y^{1-2s}$ is a 
Muckenhoupt $A_2(\C)$ weight.   However, we are {particularly} interested in cases where $s$ satisfies 
\begin{equation}\label{eq:essinfs}
 \mathrm{ess\:inf}_{x\in \Omega} s(x) =0 ,  
\end{equation}
as this will allow almost perfect approximation of functions with jump discontinuities.
Remarkably enough condition \eqref{eq:essinfs} prevents $w$ to be a Muckenhoupt weight in 
general as we show next.

\begin{proposition}\label{eq:NoA2}
Suppose that for $x_0\in \Omega$, $s(x)\simeq |x-x_0|^q$ (locally) on $x_0$ for some $q>0$, that is, there exists positive constants $R_0, M_0, m_0$ such that
\begin{equation}\label{eq:sandwich}
m_0 |x-x_0|^q\leq s(x)\leq  M_0|x-x_0|^q,
\end{equation}
for all $x$ such that $|x-x_0|\leq R_0$. Then, $w\notin A_2(\C)$ where $w(x,y):=y^{1-2s(x)}$.
\end{proposition}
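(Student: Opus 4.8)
The plan is to refute the Muckenhoupt condition \eqref{eq:M} directly, by exhibiting a one–parameter family of cubes $Q_\ell\subset\C$ shrinking toward $(x_0,0)$ along which the $A_2$ product diverges as $\ell\to0^+$. The mechanism is that \eqref{eq:sandwich} forces $s(x_0)=0$ and $s(x)\sim|x-x_0|^q$ near $x_0$, so $w(x,y)=y^{1-2s(x)}$ is comparable to $y$ near the vertical segment over $x_0$, whereas $w^{-1}=y^{2s(x)-1}$ has a strong $x$–singularity there: since $\int_0^\ell y^{2s(x)-1}\dif y=\ell^{2s(x)}/(2s(x))$ and $1/s(x)$ is comparable to $|x-x_0|^{-q}$ by \eqref{eq:sandwich}, the average of $w^{-1}$ over a cube of side $\ell$ sitting on $\{y=0\}$ over $x_0$ is of order $\ell^{-1-q}$, while the average of $w$ is only of order $\ell$, so their product is of order $\ell^{-q}\to\infty$. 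This is precisely where hypothesis \eqref{eq:essinfs} (at the single point $x_0$) enters; if $s$ were bounded away from $0$ the same computation would instead give a bounded product.

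Concretely, I would set $B_\ell:=\{x\in\R^N:|x-x_0|_\infty<\ell/2\}$ and $Q_\ell:=B_\ell\times(0,\ell)\subset\C$, a genuine cube of volume $\ell^{N+1}$, and restrict to $\ell$ small enough that $B_\ell\subset\Omega$, that $|x-x_0|\le R_0$ on $B_\ell$ (note $|x-x_0|\le\sqrt N\,\ell/2$ there), and that $\beta_\ell:=M_0(\sqrt N\,\ell/2)^q<1$; by \eqref{eq:sandwich} this gives $0\le s(x)\le\beta_\ell$ for all $x\in B_\ell$. The first step is to estimate, for fixed $x\in B_\ell$, the two vertical integrals. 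Since $0\le s(x)<1$ and $0<\ell\le1$ one has $\int_0^\ell y^{1-2s(x)}\dif y=\ell^{2-2s(x)}/(2-2s(x))\ge\ell^2/2$, and $\int_0^\ell y^{2s(x)-1}\dif y=\ell^{2s(x)}/(2s(x))$ with $\ell^{2s(x)}\in[\ell^{2\beta_\ell},1]$; since $\ell^{2\beta_\ell}=e^{2\beta_\ell\ln\ell}\to1$ as $\ell\to0$ (because $\ell^q|\ln\ell|\to0$), after shrinking $\ell$ once more we may assume $\ell^{2s(x)}\ge\tfrac12$ uniformly on $B_\ell$.

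The second step assembles the averages. By Fubini and the first estimate, $\frac{1}{|Q_\ell|}\int_{Q_\ell}w\ge\frac{1}{\ell^{N+1}}\cdot\frac{\ell^2}{2}\cdot|B_\ell|=\frac{\ell}{2}$, and, using $1/(2s(x))\ge1/(2M_0|x-x_0|^q)$ from \eqref{eq:sandwich}, $\frac{1}{|Q_\ell|}\int_{Q_\ell}w^{-1}\ge\frac{1}{4M_0\ell^{N+1}}\int_{B_\ell}|x-x_0|^{-q}\dif x$. Here I split on the dimensional threshold $q=N$. If $q\ge N$ then $\int_{B_\ell}|x-x_0|^{-q}\dif x=+\infty$, so $w^{-1}\notin L^1(Q_\ell)$ and \eqref{eq:M} fails outright. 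If $0<q<N$, the substitution $x=x_0+\ell z$ gives $\int_{B_\ell}|x-x_0|^{-q}\dif x=c_{N,q}\,\ell^{N-q}$ with $c_{N,q}:=\int_{|z|_\infty<1/2}|z|^{-q}\dif z\in(0,\infty)$, hence $\frac{1}{|Q_\ell|}\int_{Q_\ell}w^{-1}\ge\frac{c_{N,q}}{4M_0}\,\ell^{-1-q}$, and multiplying the two bounds,
\[
\left(\frac{1}{|Q_\ell|}\int_{Q_\ell}w\,\dif x\dif y\right)\left(\frac{1}{|Q_\ell|}\int_{Q_\ell}w^{-1}\,\dif x\dif y\right)\ \ge\ \frac{c_{N,q}}{8M_0}\,\ell^{-q}\ \xrightarrow[\ell\to0^+]{}\ +\infty,
\]
contradicting \eqref{eq:M} for every finite $M$; thus $w\notin A_2(\C)$.

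I do not expect a genuine obstacle, the argument being essentially a scaling estimate. The two points needing care are: (i) keeping the exponential factors $\ell^{\pm2s(x)}$ comparable to $1$ \emph{uniformly} over the shrinking cube — this is exactly where $s(x_0)=0$ is used, and is the step that fails for a standard $A_2$ weight $y^{1-2s}$ with $s$ bounded away from $0$; and (ii) the borderline $q=N$, where the obstruction appears already as a failure of integrability of $w^{-1}$ at the natural scale near $(x_0,0)$, noting that condition \eqref{eg:intcond} is nonetheless unaffected since it only concerns compact subsets of the \emph{open} cylinder $\C$.
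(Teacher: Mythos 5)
Your proof is correct and follows essentially the same route as the paper's: both arguments take a family of boxes shrinking to $(x_0,0)$, lower-bound the averages of $w$ and $w^{-1}$ separately using the sandwich bound $1/(2s(x))\gtrsim |x-x_0|^{-q}$, and obtain a product of order (side length)$^{-q}\to\infty$. Your write-up is in fact slightly tidier on two points the paper glosses over — you use genuine cubes of side $\ell$ (whereas the paper's $\tilde Q_R=S_R(0)\times(0,y_0)$ with fixed $y_0$ has unbounded eccentricity as $R\downarrow 0$, which strictly speaking requires choosing $y_0\propto R$), and you treat the borderline case $q\ge N$, where $w^{-1}\notin L^1(Q_\ell)$, explicitly.
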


\begin{proof} \textit{First step.
Suppose first that $x_0=0\in \Omega$, $s(x)=|x|^q$ for $x\in B_{R_0}(0)$, and let $Q_R=B_{R}(0)\times (0,y_0)$ for $0<R\leq R_0<1$ and $0<y_0<1$.} 

Using cylindrical coordinates we observe
\begin{align*}
\int_{Q_R}w  \dif x\dif y&\geq  C_1 \int_0^R\int_0^{y_0}r^{N-1}y^{1-2 r^q} \dif y \dif r\geq C_2  \int_0^R r^{N-1}y_0^{2-2 r^q} \dif r \\
&\geq C_3 R^N y_0^2,
\end{align*}
where $C_1, C_2$, and $C_3$ are positive and independent of $R$ and $y_0$.  Similarly, 
\begin{align*}
\int_{Q_R}w^{-1}  \dif x\dif y&\geq C_1 \int_0^R\int_0^{y_0}r^{N-1}y^{-(1-2 r^q)} \dif y \dif r\geq C_4  \int_0^R r^{N-1-q}y_0^{2 r^q}  \dif r \\
&\geq C_5 R^{N-q} y_0^{2R^q},
\end{align*}
where also  $C_4$, and $C_5$ are positive and independent of $R$ and $y_0$. Since $|Q_R|\simeq R^N y_0$, we have
\begin{equation*}
\left(\frac{1}{|Q_R|}\int_{Q_R}w  \dif x\dif y\right)\left(\frac{1}{|Q_R|}\int_{Q_R}w^{-1} \dif x\dif y\right)\geq C_6  \frac{y_0^{2R^q}}{R^q},
\end{equation*}
for some $C_6>0$ independent of $R$. Taking $R\downarrow 0$, we have that the L.H.S. expression is unbounded.

\textit{Second step: Consider $x_0\neq 0$, $s(x)\neq |x|^q$, and show that the above inequality holds similarly for a  family of cubes $R\mapsto \tilde{Q}_R$}.

If $x_0\neq 0$ and $s(x)=|x|^q$, then a linear change of coordinates can be performed and the proof above still holds, so the choice of $x_0=0$ is without loss of generality. 

Additionally, let $\tilde{Q}_R\supset Q_R$ be defined as $\tilde{Q}_R=S_{R}(0)\times (0,y_0)$ where $S_R(0)$ is the smallest square that contains $B_{R}(0)$. Hence,  the quotient $|S_R(0)|/|B_R(0)|$ is independent of $R$ and for $c:=|\tilde{Q}_R|/|Q_R|$ we observe
\begin{align*}
&\left(\frac{1}{|\tilde{Q}_R|}\int_{\tilde{Q}_R}w  \dif x\dif y\right)\left(\frac{1}{|\tilde{Q}_R|}\int_{\tilde{Q}_R}w^{-1} \dif x\dif y\right)\\&\qquad\qquad\geq \frac{1}{c^2}\left(\frac{1}{|Q_R|}\int_{Q_R}w  \dif x\dif y\right)\left(\frac{1}{|Q_R|}\int_{Q_R}w^{-1} \dif x\dif y\right).
\end{align*}
Hence, by taking $R\downarrow 0$, we have that  $(x,y)\mapsto y^{1-2|x|^q}$ is not in $A_2(\C)$. 

Finally, for $s(x)\neq |x|^q$ but when \eqref{eq:sandwich} holds true, we similarly have 
\begin{align*}
&\left(\frac{1}{|Q|}\int_{Q}w  \dif x\dif y\right)\left(\frac{1}{|Q|}\int_{Q}w^{-1} \dif x\dif y\right)\\&\qquad\qquad\geq \tilde{c}\left(\frac{1}{|Q|}\int_{Q}\tilde{w}  \dif x\dif y\right)\left(\frac{1}{|Q|}\int_{Q}\tilde{w}^{-1} \dif x\dif y\right),
\end{align*}
for some $\tilde{c}>0$, where $\tilde{w}(x,y):=y^{1-2|x|^q}$ and $w(x,y):=y^{1-2s(x)}$, i.e., $w\notin A_2(\C)$. 
\end{proof} 
\begin{rem}
{\rm Although the above result is quite general with respect of the rate of decrease of $s$, at this point, we do not know if there are continuous functions $s$ with zeros within the domain $\Omega$, for which $(x,y)\mapsto y^{1-2s(x)}$ is an element of $A_2(\C)$.}
\end{rem}

 The study of the trace space of $H$ is of utmost importance in what follows. In particular, we are interested in the restriction of the trace operator to $\Omega$. The paper \cite{ACapella_JDavila_LDupaigne_YSire_2011a} studies the trace of $H$ when $s$ 
is independent of $x$ and away from 0. However, that approach which is based on 
\cite{lions1959theoremes} cannot be applied here. We next prove a trace characterization result, the proof is inspired by \cite{ANekvinda_1993a} where the authors 
consider bounded domains and weights of distance to the boundary type.

\begin{theorem}\label{thm:trace}
Let $s$ be such that its zero set $\mathcal{A}_0(s):=\{x\in \Omega: s(x)=0, \:\: \text{a.e.}\}$  has zero measure. Then, there exist a unique bounded linear trace operator 
\[
 \tr : H, H_0 \rightarrow L^2(\Omega; (1-\delta(x))^2)=L^2(\Omega; s(x)^2)
\]
such that $\tr(u) = u|_{\Omega\times\{0\}}$, for all $u\in H\cap C^\infty(\overline{\C})$, and hence also for all $u \in H_0\cap C_c^\infty(\C_\Omega)$.
\end{theorem}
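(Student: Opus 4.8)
The plan is to prove the a priori estimate $\int_\Omega s(x)^2|v(x,0)|^2\,\dif x\le C\|v\|_H^2$ with an absolute constant $C$ on the dense subset $K=\{v\in C^\infty(\overline{\C}):\|v\|_H<+\infty\}$, and then to obtain $\tr$ by continuous extension. Since $H$ is by definition the $\|\cdot\|_H$-closure of $K$ and $H_0$ the closure of $K_0\subset K$, the linear map $v\mapsto v|_{\Omega\times\{0\}}$, once shown to be $\|\cdot\|_H$-to-$L^2(\Omega;s(x)^2)$ bounded on $K$, extends in a unique way to a bounded linear operator on $H$; restricting it to $H_0\subset H$ gives the statement there, and both the identity $\tr(u)=u|_{\Omega\times\{0\}}$ on smooth finite-energy functions and the uniqueness follow automatically from the density construction. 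So everything reduces to the a priori estimate.

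The core is a pointwise-in-$x$, one-dimensional computation. Fix $v\in K$ and $x\in\Omega$ with $s(x)>0$; by the hypothesis $|\mathcal{A}_0(s)|=0$ this holds for a.e.\ $x$, and on the null set $\{s=0\}$ the integrand $s(x)^2|v(x,0)|^2$ vanishes and may be ignored. Write $\delta=\delta(x)=1-2s(x)<1$. The fundamental theorem of calculus in $y$ together with Cauchy--Schwarz applied with the splitting $|\partial_y v|=y^{-\delta/2}\big(y^{\delta/2}|\partial_y v|\big)$ give
\begin{equation*}
|v(x,1)-v(x,0)|\;\le\;\int_0^1|\partial_y v(x,y)|\,\dif y\;\le\;\Big(\int_0^1 y^{-\delta}\,\dif y\Big)^{1/2}\Big(\int_0^1 y^{\delta}|\partial_y v(x,y)|^2\,\dif y\Big)^{1/2},
\end{equation*}
and since $\delta<1$ one has $\int_0^1 y^{-\delta}\,\dif y=(1-\delta)^{-1}=(2s(x))^{-1}<+\infty$. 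Squaring, using $(a+b)^2\le2a^2+2b^2$, then multiplying by $s(x)^2$ and using $0\le s(x)\le1$,
\begin{align*}
s(x)^2|v(x,0)|^2 &\le 2\,s(x)^2|v(x,1)|^2+s(x)\int_0^1 y^{\delta}|\partial_y v(x,y)|^2\,\dif y\\
&\le 2|v(x,1)|^2+\int_0^\infty y^{\delta(x)}|\nabla v(x,y)|^2\,\dif y.
\end{align*}
The point is that the weight $s(x)^2$ present in the target norm is exactly what neutralizes the blow-up of $(2s(x))^{-1}$ as $s(x)\downarrow0$, which is precisely the regime that destroys the Muckenhoupt property in Proposition~\ref{eq:NoA2}.

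Next I control the ``interior slice'' $\int_\Omega|v(x,1)|^2\,\dif x$ by $\|v\|_H^2$. Applying the elementary one-variable inequality $|g(1)|^2\le 2\int_1^2\big(|g|^2+|g'|^2\big)\,\dif y$ (again the fundamental theorem of calculus plus Young's inequality) to $g=v(x,\cdot)$, and using that $y^{\delta}\ge\tfrac12$ uniformly over $y\in[1,2]$ and $\delta\in[-1,1]$, one gets $|v(x,1)|^2\le 4\int_1^2 y^{\delta(x)}\big(|v|^2+|\nabla v|^2\big)\,\dif y\le 4\int_0^\infty y^{\delta(x)}\big(|v|^2+|\nabla v|^2\big)\,\dif y$; integrating in $x$ bounds the slice by $4\|v\|_H^2$. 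Inserting this into the previous display and integrating the pointwise bound over $\Omega$ yields $\int_\Omega s(x)^2|v(x,0)|^2\,\dif x\le C\|v\|_H^2$, and the density argument of the first paragraph concludes.

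I do not expect a genuine obstacle: once the $s(x)^2$-weighting is seen to absorb the degeneracy, the argument is elementary. The two points that need care are (i) confining the pointwise estimate to the full-measure set $\{s(x)>0\}$, so that $\int_0^1 y^{-\delta}\,\dif y$ converges (at points with $s(x)=1$ the same bound is in fact cleaner, since then $(1-\delta)^{-1}=\tfrac12$), and (ii) the unboundedness of the $y$-direction, which is harmless because the tail $\{y>1\}$ contributes only nonnegative terms to $\|v\|_H$ and the slice at height $y=1$ is estimated inside the nondegenerate region $y\in[1,2]$.
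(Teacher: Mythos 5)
Your proof is correct, and its engine is the same as the paper's: a one-dimensional fundamental-theorem-of-calculus argument in $y$, combined with Cauchy--Schwarz after splitting the weight as $y^{-\delta/2}\cdot y^{\delta/2}$, which produces the factor $(1-\delta(x))^{-1}=(2s(x))^{-1}$ that is then absorbed by the weight $s(x)^2$ in the target norm --- exactly the mechanism the paper exploits, and the reason the degeneracy at $s(x)=0$ (which kills the $A_2$ property) is harmless for the trace. The difference is in how the zeroth-order term is handled. The paper never evaluates $u$ at a fixed positive height: it multiplies the identity $u(x,y)=u(x,0)+\int_0^1 yD_{N+1}u(x,ty)\,\mathrm{d}t$ by $y^{\delta(x)/2}$ and averages over $y\in(0,\sigma)$, so that the non-derivative contribution becomes the weighted average $I_1$, bounded directly by the weighted $L^2$ norm on the strip $(0,R)$; this keeps everything in the weighted norms throughout and gives the explicit constant $M_\sigma=6\sigma^{-1}$ with a free parameter $\sigma\in(0,1]$. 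You instead anchor the FTC at the slice $y=1$ and pay for it with an auxiliary interior-slice estimate $\int_\Omega|v(x,1)|^2\,\mathrm{d}x\le 4\|v\|_H^2$, which works because the weight $y^{\delta}$ is uniformly bounded below on $[1,2]$ for all $\delta\in[-1,1]$. Your route is arguably more elementary (one Cauchy--Schwarz instead of the paper's change of variables $z=ty$ and the double H\"older estimate for $I_2$), at the cost of the extra lemma; both give an absolute constant and the same continuous extension by density to $H$ and $H_0$. Your points (i) and (ii) of care are exactly the right ones, and your observation that the integrand vanishes identically on $\{s=0\}$ even makes the measure-zero hypothesis on $\mathcal{A}_0(s)$ irrelevant for the estimate itself (it matters only so that $L^2(\Omega;s(x)^2)$ is a genuine normed space).
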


\begin{proof}
Consider $u\in  H$ such that $u\in C^\infty(\overline{\C})$. Let $(x,y)\in \overline{\C}$ and $x\notin \mathcal{A}_0(s)$, then it  follows that \begin{equation*}
u(x,y)=u(x,0)+\int_0^1  y D_{N+1} u(x,ty) \dif t,
\end{equation*}
where $D_{N+1}$ corresponds to the partial derivative with respect to the $N+1$ coordinate. Then, multiplication by $ y^{\delta(x)/2}$ and integrating from $0$ to $\sigma>0$ with respect to $y$ leads to
\begin{equation*}
|u(x,0)|\int _0^\sigma y^{\delta(x)/2} \dif y\leq\int _0^\sigma  |u(x,y)|y^{\delta(x)/2} \dif y+ \int _0^\sigma\int_0^1   y |D_{N+1} u(x,ty)| y^{\delta(x)/2} \dif y\dif t.
\end{equation*}
Note that $\delta(x)/2\leq 1/2$ and suppose without loss of generality $0<\sigma\leq 1$, then $y^{1/2}\leq y^{\delta(x)/2}$ for all $y\in (0,\sigma)$. Therefore,
\begin{equation*}
\frac{2}{3}\sigma^{3/2}\leq \int _0^\sigma y^{\delta(x)/2} \dif y,
\end{equation*}
and hence,
\begin{equation}\label{eq:IneqI1I2}
|u(x,0)|\leq I_1+I_2, 
\end{equation}
for 
$$
I_1:=\frac{3}{2}\sigma^{-3/2}\int _0^\sigma  |u(x,y)|y^{\delta(x)/2} \dif y , \;\; 
I_2=\frac{3}{2}\sigma^{-3/2}\int_0^1\int _0^\sigma   y |D_{N+1} u(x,ty)| y^{\delta(x)/2} \dif y\dif t ,
$$
and where we have used Tonelli's theorem to switch the order of integration in the expression that orginates $I_2$.

H\"{o}lder's inequality implies that, for $I_1$, we have
\begin{align*}
I_1&\leq \frac{3}{2}\sigma^{-\frac32} \left (\int _0^\sigma  \dif y\right)^{\frac12} \left(\int _0^\sigma  |u(x,y)|^2y^{\delta(x)}  \dif y\right)^{\frac12}\leq \frac{3}{2}\sigma^{-1} \left(\int _0^R  |u(x,y)|^2y^{\delta(x)}  \dif y\right)^{\frac12},
\end{align*}
for any $R\geq \sigma$. 

Further, for $I_2$ consider the change of variable $z=ty$ and again by H\"{o}lder's inequality we observe:
\begin{align*}
&I_2\leq\frac{3}{2}\sigma^{-\frac12}\int_0^1\int _0^{\sigma t}  |D_{N+1} u(x,z)| z^{\frac{\delta(x)}{2}} t^{-1-\frac{\delta(x)}{2}} \dif z\dif t\\
&\hspace{-0.2cm}\leq \frac{3\sigma^{-\frac12} }{2}\hspace{-0.1cm}\left (\int_0^1 \left(t^{-\frac{1+\delta(x)}{4}}\right)^2\hspace{-0.2cm}\dif t\right )^{\frac12}\hspace{-0.1cm} \left (\int_0^1 \left( \int _0^{\sigma t}  |D_{N+1} u(x,z)| z^{\frac{\delta(x)}{2}} t^{\frac{1+\delta(x)}{4}-1-\frac{\delta(x)}{2}} \dif z\right)^2\hspace{-0.2cm} \dif t\right )^{\frac12}\\
&\leq \frac{3}{2}\sigma^{-\frac12} \left (\frac{2}{1-\delta(x)}\right )^{\frac12} \left (\int_0^1 t^{-\frac{3+\delta(x)}{2}} \left( \int _0^{\sigma t}  |D_{N+1} u(x,z)| z^{\frac{\delta(x)}{2}}  \dif z\right)^2 \dif t\right )^{\frac12}\\
&\leq \frac{3}{2} \left (\frac{2}{1-\delta(x)}\right )^{\frac12} \left (\int_0^1 t^{-\frac{1+\delta(x)}{2}} \left( \int _0^{\sigma t}  |D_{N+1} u(x,z)|^2 z^{\delta(x)}  \dif z\right) \dif t  \right )^{\frac12}.
\end{align*}
Then, for any $R\geq \sigma$, we have
\begin{align*}
I_2&\leq \frac{3}{2} \left (\frac{2}{1-\delta(x)}\right )^{1/2} \left (\int_0^1 t^{-\frac{1+\delta(x)}{2}}\dif t  \right )^{1/2} \left( \int _0^{R}  |D_{N+1} u(x,z)|^2 z^{\delta(x)}  \dif z   \right )^{1/2}\\
&\leq  \frac{3}{1-\delta(x)} \left( \int _0^{R}  |D_{N+1} u(x,z)|^2 z^{\delta(x)}  \dif z\right )^{1/2}.
\end{align*}

Therefore, from \eqref{eq:IneqI1I2}, we obtain that for any $R\geq \sigma$
\begin{align*}
&|u(x,0)|(1-\delta(x))\\
&\quad\leq  \frac{3}{2}\sigma^{-1} (1-\delta(x)) \left(\int _0^R  |u(x,y)|^2y^{\delta(x)}  \dif y\right)^{\frac12}+3 \left( \int _0^{R}  |D_{N+1} u(x,z)|^2 z^{\delta(x)}  \dif z\right )^{\frac12}\\
&\quad\leq 3\sigma^{-1}\left(\int _0^R  |u(x,y)|^2y^{\delta(x)}  \dif y\right)^{\frac12}+3 \left( \int _0^{R}  |D_{N+1} u(x,z)|^2 z^{\delta(x)}  \dif z\right )^{\frac12}\\
&\quad\leq 6\sigma^{-1} \left(\int _0^R |u(x,y)|^2y^{\delta(x)}  \dif y + \int _0^{R}  |D_{N+1} u(x,z)|^2 z^{\delta(x)}  \dif z \right)^{\frac12}.
\end{align*}
Integration over $\Omega$ with respect to $x$ of the above squared inequality leads to
\begin{align*}
&\int_\Omega |u(x,0)|^2(1-\delta(x))^2\dif x\\
&\qquad\qquad\leq M_\sigma^2 \left(\int_\Omega\int _0^R |u(x,y)|^2y^{\delta(x)}  \dif y + \int_\Omega\int _0^{R}  |D_{N+1} u(x,z)|^2 z^{\delta(x)}  \dif z \right)\\
&\qquad\qquad\leq M_\sigma^2 \left(\int_\Omega\int _0^R |u(x,y)|^2y^{\delta(x)}  \dif y + \int_\Omega\int _0^{R}  |\nabla u(x,z)|^2 z^{\delta(x)}  \dif z \right),
\end{align*}
where $M_\sigma= 6\sigma^{-1}$, $R\geq \sigma$, with $\sigma\in (0,1]$ and for an arbitrary $u\in  H$ such that $u\in C^\infty(\overline{\C})$.

Since $R\geq \sigma$ is arbitrary, we have that for an arbitrary $u\in  H\cap C^\infty(\overline{\C})$, we have
\begin{equation*}
\|u(\cdot,0)\|_{L^2(\Omega; (1-\delta(x))^2)}\leq M_\sigma \|u\|_{H}.
\end{equation*}
The operator $\tr$ is the above extension to $H$: Since $H$ is the closure of  
$C^\infty(\overline{\C})$
with respect to the energy norm, we can take $\sigma=1$ with  $M_\sigma=6$, and obtain
\begin{equation*}
\|\tr u\|_{L^2(\Omega; (1-\delta(x))^2)}\leq 6 \|u\|_{H},
\end{equation*}
which completes the proof, since   $(1-\delta(x))^2=4s(x)^2$. The result for $H_0$ is analogous. 
\end{proof}

The behavior of $s$ controls the local regularity on space $\tr H$, in particular, $\tr H$ contains piecewise constants in any dimension for appropriate choices of $s$ as we see in the following example. 

\begin{exa}\label{exampledisc}There exists non-constant $s:\Omega\to \mathbb{R}$ maps  such that $\tr H$ contains piecewise smooth functions. For the sake of simplicity, let $\Omega=B_{1/2}(0)$, define $s(x)=\hat{s}(x_1)$, where $x=(x_1,x_2,\dots,x_N)$, and such that $0<\delta\leq \hat{s}(x_1)\leq \kappa<1/4$ for all $|x_1|<\epsilon$, and $0<\delta\leq \hat{s}(x_1)$ for all $x_1$.  Let
\begin{equation*}
w(x,y):=\left\{
  \begin{array}{ll}
    0, & \hbox{$x_1< -\sqrt{y}$ ;} \\
    \frac{1}{2\sqrt{y}}(x_1+\sqrt{y}), & \hbox{$\sqrt{y}\leq x_1\leq-\sqrt{y}$ ;} \\
    1, & \hbox{ $\sqrt{y}<x_1$.}
  \end{array}
\right.
\end{equation*}
so that 
\begin{equation*}
|\nabla w(x,y)|^2=\left\{
  \begin{array}{ll}
    0, & \hbox{$x_1< -\sqrt{y}$ ;} \\
    \frac{1}{4y}+\frac{x_1^2}{16 y^3} , & \hbox{$\sqrt{y}\leq x_1\leq-\sqrt{y}$ ;} \\
    0, & \hbox{ $\sqrt{y}<x_1$.}
  \end{array}
\right.
\end{equation*}
Define $u(x,y):=\eta(y)w(x,y)$ for $(x,y)\in\C$, with $\eta\in C^\infty_c(\mathbb{R})$, such that $\eta(y)=1$ if $y\in [0,R^*]$ for some $R^*$, and therefore,
\begin{equation*}
\int_{B_{\epsilon}(0)}\int_0^1y^{1-2s}|\nabla w(x,y)|^2\dif x \dif y \leq \int_{B_{\epsilon}(0)}\int_0^1y^{1-2\kappa}|\nabla w(x,y)|^2\dif x \dif y<+\infty.
\end{equation*}
Then, $u\in W$ and smoothing argument involving mollification shows that $u\in H$, and $\tr u = \chi_{\{x_1\geq 0\}}$. 
\end{exa}

\section{The Optimization Problem }\label{s:prob}
 Throughout this section we suppose that $f\in L^2(\Omega; s(x)^2)$, $s(x) \in [0,1]$ 
for almost all $x \in \Omega$, and that  $\mathcal{A}_0(s):=\{x\in \Omega: s(x)=0, \:\: \text{a.e.}\}$  has zero measure. 

Given a regularization parameter $\mu > 0$, and parameter 
$\theta > 0$ we consider the following variational problem 
\begin{equation}\label{1st}
\min_{u\in H(\mathcal{C}; y^{1-2s(x)})} J(u,s):=\int_{\C}y^{1-2s(x)} \left(\theta |u|^2 + |\nabla u|^2 \right) \dif x\dif y+ \frac{\mu}{2}\int_{\Omega} s(x)^2 |\tr u - f| ^2  \dif x.
\end{equation}

\begin{rem}[Parameter $\theta$]
\label{rem:par_theta}
{\rm 
In case the weights $w$ are of class $A_2$, we can set 
 $\theta = 0$, given that  the Poincar\'e type 
 inequalities are available in this case.  However, the weights $y^{1-2s(x)}$ in our case are not in 
 $A_2$ since we allow $s(x) =0$ for some $x\in \Omega$. Such Poincar\'e type inequalities on $H(\C;y^{1-2s(x)})$ are not known yet.
} 
\end{rem}
The model \eqref{1st} provides a completely new approach to approximate nonsmooth
functions $f$ with $\tr u$. Such a situation often occurs in inverse 
problems where given $f$ one is interested in a reconstruction $\tr u$ which is 
close to $f$. For instance in 
image denoising problems where $f$ represents a noisy image, given by $f=u_{\mathrm{true}}+\xi$ where $u_{\mathrm{true}}$ is the true target to recover, $\int_\Omega\xi=0$, and $\int_\Omega|\xi|^2=\sigma^2$ for a known  $\sigma>0$. The first term in \eqref{1st} is the regularization and the second term  is the so-called data fidelity, together they  ensure that the reconstruction $\tr u$ is close 
to $u_{\mathrm{true}}$ on $\Omega$.

For a fixed $s$ and given $f$ we next state existence and uniqueness of solution to \eqref{1st}. This follows immediately from Theorem~\ref{thm:trace}.
\begin{theorem}[existence and uniqueness]
\label{thm:exist_var}
There exist a unique solution to \eqref{1st}. 
\end{theorem}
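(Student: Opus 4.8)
The plan is to apply the direct method of the calculus of variations to the functional $J(\cdot,s)$ over the Hilbert space $H = H(\C;y^{1-2s(x)})$, with the trace theorem providing the one nontrivial ingredient. First I would observe that $J(\cdot,s)$ is well-defined and finite on all of $H$: the first term is exactly $\theta$ times (part of) $\|u\|_H^2$ and hence bounded by $\max\{1,\theta\}\|u\|_H^2 < +\infty$, while the second term is finite because $\tr u \in L^2(\Omega;s(x)^2)$ by Theorem~\ref{thm:trace} and $f\in L^2(\Omega;s(x)^2)$ by the standing assumption of the section, so $\|\tr u - f\|_{L^2(\Omega;s(x)^2)}^2 < +\infty$; here I use that the section hypothesis $|\mathcal A_0(s)|=0$ is precisely what Theorem~\ref{thm:trace} requires.

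Next I would establish coercivity. Since $\theta>0$,
\[
J(u,s) \;\ge\; \theta\int_\C y^{1-2s(x)}\bigl(|u|^2+|\nabla u|^2\bigr)\dif x\dif y \;=\; \theta\,\|u\|_H^2,
\]
so $J(u,s)\to+\infty$ as $\|u\|_H\to\infty$; this is where Remark~\ref{rem:par_theta} is relevant, since without $\theta>0$ one would need a Poincaré inequality that is not available. Then I would check weak lower semicontinuity: $u\mapsto \theta\|u\|_H^2$ (plus the correction, but really the whole first term) is a continuous convex functional on $H$, hence weakly lsc; the second term is the composition of the bounded linear operator $\tr:H\to L^2(\Omega;s(x)^2)$ with the continuous convex map $v\mapsto \tfrac{\mu}{2}\|v-f\|^2$, and a bounded linear operator is weakly-weakly continuous, so this term is weakly lsc as well; a sum of weakly lsc functionals is weakly lsc. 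Taking a minimizing sequence $\{u_n\}$, coercivity gives it is bounded in $H$, reflexivity of the Hilbert space $H$ yields a weakly convergent subsequence $u_{n_k}\rightharpoonup u^*$, and weak lower semicontinuity gives $J(u^*,s)\le\liminf J(u_{n_k},s)=\inf_{H}J(\cdot,s)$, so $u^*$ is a minimizer.

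For uniqueness I would argue by strict convexity. The map $u\mapsto\theta\int_\C y^{1-2s(x)}(|u|^2+|\nabla u|^2)\dif x\dif y = \theta\|u\|_H^2$ is strictly convex on $H$ (it is $\theta$ times the square of a norm coming from an inner product), and the fidelity term is convex, so $J(\cdot,s)$ is strictly convex, whence the minimizer is unique. Alternatively, and perhaps cleaner to write, I would note that $J(\cdot,s)$ is a quadratic functional $J(u,s)=a(u,u)-2\ell(u)+c$ with $a(u,v)=\int_\C y^{1-2s(x)}(\theta\,uv+\nabla u\cdot\nabla v)\dif x\dif y+\tfrac{\mu}{2}\int_\Omega s(x)^2(\tr u)(\tr v)\dif x$ a bounded (boundedness of $\tr$ again) coercive (the $\theta$-term alone gives $a(u,u)\ge\theta\|u\|_H^2$) symmetric bilinear form and $\ell(u)=\tfrac{\mu}{2}\int_\Omega s(x)^2 f\,\tr u\,\dif x$ a bounded linear functional, so Lax–Milgram gives existence and uniqueness of the minimizer in one stroke. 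I expect no genuine obstacle here: the only substantive point is that everything hinges on Theorem~\ref{thm:trace} to control and make sense of the fidelity term, and on $\theta>0$ to replace the missing Poincaré inequality in establishing coercivity — this is exactly why the statement says "This follows immediately from Theorem~\ref{thm:trace}."
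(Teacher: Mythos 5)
Your proposal is correct and follows essentially the same route as the paper, which itself only sketches the argument: direct method with coercivity supplied by $\theta>0$ and weak lower semicontinuity supplied by convexity together with the boundedness of $\tr$ from Theorem~\ref{thm:trace}, plus strict convexity for uniqueness. The only cosmetic quibble is that $J(u,s)\ge\theta\|u\|_H^2$ as written uses $\theta\le 1$; in general the coercivity constant is $\min\{\theta,1\}$, which changes nothing.
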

\begin{proof}
{Existence follows from application of direct methods of calculus of variations, the norm definition on $H$ and the Theorem~\ref{thm:trace}. Uniqueness follows directly from convexity arguments.}
\end{proof}
The first order necessary and sufficient optimality conditions for the minimization 
problem \eqref{1st} are given by: 
\begin{align}\label{eq:ext_var}
\begin{aligned}
 \int_\C y^{1-2s(x)} \left(\nabla u \cdot \nabla w + \theta u w\right) \dif x \dif y
  + & \mu \int_\Omega \tr u   \:\tr w \:s(x)^2\dif x  \\
  &= \mu\int_\Omega f \tr w \:s(x)^2\dif x, \quad \mbox{for all } w\in H . 
\end{aligned}                       
\end{align}
The first step for computational implementation of the problem above requires to solve the problem in a bounded domain. Notice that all results above are valid if we replace the unbounded domain $\C$ with boundary $\partial_L \C$ 
by a bounded domain $\C_\tau = \Omega \times (0,\tau)$ with $0< \tau < \infty$ and boundary $\partial_L\C_\tau = (\partial\Omega \times [0,\tau]) \cup \Omega \times \{\tau\}$. In that case the problem \eqref{eq:ext_var} becomes: find $v \in H^\tau := H(\C_\tau;y^{\delta(x)})$ 
\begin{align}\label{eq:ext_var_trunc}
\begin{aligned}
 \int_{\C_\tau} y^{1-2s(x)} \left(\nabla v \cdot \nabla w + \theta v w\right) \dif x \dif y
  + & \mu \int_\Omega \tr v   \:\tr w \:s(x)^2\dif x  \\
  &= \mu\int_\Omega f \tr w \:s(x)^2\dif x, \quad \mbox{for all } w\in H^\tau . 
\end{aligned}                       
\end{align}
It is known that for a constant $s \in (0,1)$ the solution $v$ to \eqref{eq:ext_var_trunc} converges to $u$ solving \eqref{eq:ext_var} exponentially with respect to $\tau > 1$, see 
\cite{RHNochetto_EOtarola_AJSalgado_2014a}. Such exponential approximation is also expected in the case of a non-constant $s$. Indeed, as we discussed earlier, a constant $s$ implies that 
$\tr u$ solves the fractional equation \eqref{eq:EL_s_fix}. A relation between \eqref{eq:ext_var}
and fractional PDE of type \eqref{eq:EL_s_fix} with $s(x)$ instead of $s$ is an open 
question.  We expect that studying \eqref{eq:ext_var} or equivalently \eqref{1st} is a first step in establishing the definition of fractional 
$(-\Delta)^{s(x)}$ for $x \in \Omega$.

For given $s$ and $f$, Theorem~\ref{thm:exist_var} shows existence of a unique solution to  \eqref{1st} and equivalently \eqref{eq:ext_var}. Nevertheless for our
model \eqref{1st} to be useful in practice we need to determine the function $s$. For this matter, consider the following important points that will lead to the selection procedure 
for $s$.

\begin{enumerate}[$(i)$]
 \item \label{disc}
  {\bf Precise edge recovery.} In example \ref{exampledisc}, a family of $s$ functions    is identified so that $u\in H(\mathcal{C}; y^{1-2s(x)})$, and $\tr u = \chi_{\{x_1\geq 0\}}$. This suggests that for the optimization problem \eqref{1st} to recover edges, or more precisely for $\tr u$ to have discontinuities in the same places where $u_{\mathrm{true}}$ has them  (for $f=u_{\mathrm{true}}+\mathrm{noise}$) the function $s$ needs to be close to or equal to zero in these regions. This suggest to utilize a rough edge detection at first and then force $s$ to be close zero on neighborhoods of these regions.

 \item \label{homogeneous}
  {\bf Homogeneous/flat regions recovery.} We proceed rather formally first. Suppose first  that $\theta=0$, and  that  problem \eqref{1st} is posed not on $H(\mathcal{C}; y^{1-2s(x)})$, but on the Banach space of equivalence classes generated by the seminorm $\int_{\C}y^{1-2s(x)}  |\nabla u|^2 \dif x\dif y$. Consider $s(x)=1$ on a certain region $\Omega_0$. Suppose that $|\nabla u|\geq e>0$ on $\Omega_0\times (0,y_0)$, then
\begin{equation*}
\int_{\C}y^{1-2s(x)}  |\nabla u|^2 \dif x\dif y\geq \epsilon \int_{\Omega_0\times (0,y_0)}y^{-1}  \dif x\dif y =+\infty.
\end{equation*}
Hence, the solution to  \eqref{1st}, needs to satisfy $|\nabla u|\simeq 0$ near $\Omega\times\{0\}$. This suggest that flat regions would be recovered if $s=1$ on that same region. However, if $\theta>0$, then analogously, solutions to  problem \eqref{1st}, would be forced to satisfy $|u|\simeq 0$ near $\Omega\times\{0\}$. Hence, we consider $0<\theta\ll 1$ and $s(x)\simeq 1$, but $s(x)<1$, on regions where homogeneous features are present so that $u$ is not forced to be identically zero but $|\nabla u|\simeq 0$ is enforced there.  
\end{enumerate}

We provide an algorithm based on the two points above in what follows. A more detailed 
bilevel optimization framework (where both $u$ and $s$ are optimization variables) 
will be considered in a forthcoming publication.

\section{Numerical Method}\label{s:num_method}

We now focus on the discretization of the truncated problem \eqref{eq:ext_var_trunc} and the selection of an appropriate $s$ function.

\subsection{Discretization of \eqref{eq:ext_var_trunc}}\label{s:disc}

From hereon we will assume that $\Omega$ is polygonal/polyhedral Lipschitz. 
We recall that the results of previous sections remains valid if we replace the 
unbounded domain $\C$ with boundary $\partial_L\C$ by $\C_\tau = \Omega \times (0,\tau)$ 
with $\tau > 0$ and boundary $\partial_L\C = (\partial\Omega \times [0,\tau]) \cup \Omega \times \{\tau\}$. The Euler-Lagrange equations for the resulting problem on $\C_\tau$ are given
by \eqref{eq:ext_var_trunc}.

We begin by introducing a discretization for \eqref{eq:ext_var_trunc}. We will follow the notation from \cite{HAntil_CNRautenberg_2016a}. For a constant $s$ such a discretization for $v$ was  first 
considered in \cite{RHNochetto_EOtarola_AJSalgado_2014a}. Let $\mathscr{T}_\Omega=\{E\}$ be a conforming and 
quasi-uniform triangulation of $\Omega$, where $E\in \mathbb{R}^N$ is an element that is 
isoparametrically equivalent to either to the unit cube or to the unit simplex in $\mathbb{R}^N$. 
We assume $\# \mathscr{T}_\Omega \propto M^N$. Thus, the element size $h_{\mathscr{T}_\Omega}$ 
fulfills $h_{\mathscr{T}_\Omega}\propto M^{-1}$. 

Furthermore, let $\mathcal{I}_\tau=\{I_k\}_{k=0}^{K-1}$, where $I_k = [y_k,y_{k+1}]$, 
is anisotropic mesh in $[0,\tau]$ in the sense that $[0,\tau]=\bigcup_{k=0}^{K-1} I_k$. For a constant $s \in (0,1)$ we define the anisotropic mesh in $y$-direction as 
\begin{equation}\label{eq:grad}
 y_k = \left(\frac{k}{K}\right)^\gamma , 
 \quad k = 0, \dots,K, \quad \gamma > \frac{1}{s} .
\end{equation}
This choice is motivated by the singular behavior of the solution towards the boundary $\Omega$ for a constant $s$. In that case anisotropically refined meshes are preferable as these can be used to compensate the singular effects \cite{meidner2017hp, RHNochetto_EOtarola_AJSalgado_2014a}. 
In all our implementations we will choose a fixed constant $s$ in \eqref{eq:grad}.
 
We construct the triangulations $\mathscr{T}_\tau$ of the cylinder $\mathcal{C}_\tau$ as tensor product triangulations by using $\mathscr{T}_\Omega$ and $\mathcal{I}_\tau$. Let $\mathbb{T}$ denotes the collection of such anisotropic meshes $\mathscr{T}_\tau$. For each $\mathscr{T}_\tau \in \mathbb{T}$ we define the finite element space $\mathbb{V}(\mathscr{T}_\tau)$ as 
\[
\mathbb{V}(\mathscr{T}_\tau):=\{V\in C^0(\overline{ \mathcal{C}_\tau}):V|_{T}\in\mathcal{P}_1(E)\oplus\mathbb{P}_1(I)\ \forall T=E\times I\in \mathscr{T}_\tau \}.
\]
In case $\partial\Omega$ has Dirichlet boundary conditions we define our finite element space as $\mathbb{V}_0(\mathscr{T}_\tau) = \mathbb{V}(\mathscr{T}_\tau) \cap \{V : V|_{\partial\C_\tau} = 0\}$, i.e., functions with zero boundary conditions. 
In case $E$ is a simplex then $\mathcal{P}_1(E)=\mathbb{P}_1(E)$, the set of polynomials of degree at most $1$. If $E$ is a cube then $\mathcal{P}_1(E)$ equals $\mathbb{Q}_1(E)$, the set of polynomials of degree at most 1 in each variable. In our numerical illustrations we shall work with simplices. 

We define the finite element space for $s$ as   
\[
 \mathbb{S}(\mathscr{T}_\Omega) := \{S \in L^\infty(\Omega) : S|_E \in \mathbb{P}_0(K)
  \quad \mbox{for all } E \in \mathscr{T}_\Omega \}
\]
which is a space of piecewise constant functions defined on $\mathscr{T}_\Omega$. The discrete 
version of \eqref{eq:ext_var_trunc} is then given by: 
Find $V \in \mathbb{V}(\mathscr{T}_\tau)$ 
\begin{align}\label{eq:ext_var_trunc_disc}
\begin{aligned}
 \int_{\C_\tau} y^{1-2S(x)}  & \left(\nabla V \cdot \nabla W + \theta V W\right) \dif x \dif y
  + \mu \int_\Omega \tr V   \:\tr W \:S(x)^2\dif x  \\
  &= \mu\int_\Omega f \tr W \:S(x)^2\dif x, \quad \mbox{for all } W\in \mathbb{V}(\mathscr{T}_\tau) , 
\end{aligned}                       
\end{align}
for $S \in \mathbb{S}(\mathscr{T}_\Omega)$. We compute the stiffness and mass matrices 
in \eqref{eq:ext_var_trunc_disc} exactly. The corresponding forcing boundary term is computed by a quadrature formula which is exact for polynomials up to degree 5. For a given $S$ the resulting discrete linear system \eqref{eq:ext_var_trunc_disc} is solved 
using the Preconditioned Conjugate Gradient (PCG) method with a block diagonal preconditioner.

\subsection{Parameter selection}\label{Sselect}

In view of what was stated about $\theta$ in section \ref{s:prob}, we have set $\theta = 10e$-10 in all our examples. 
We let 
$
 \tau = 1+\frac{1}{3}(\#\mathscr{T}_\Omega) ,
$
this choice is motivated by the fact that for a constant $s$ such a $\tau$ 
balances the finite element approximation on $\C_\tau$ and the truncation error 
from $\C$ to $\C_\tau$ \cite[Remark 5.5]{RHNochetto_EOtarola_AJSalgado_2014a}. 
The number of points in the $y$-direction is taken to be $K=20$. We use
a moderately anisotropic mesh in the $y$-direction by setting $s = 0.32$ in \eqref{eq:grad}.
Our experiments suggest that the results are stable under reducing $\theta$ or $s$
or increasing $K$ further. 

It then remains to specify the constant $\mu$ and the function $S$ in order to realize 
\eqref{eq:ext_var_trunc_disc}. We have observed that $\mu$ only affects the 
``contrast" or magnitude of $\tr V$. Nevertheless one can determine $\mu$ using 
the well established techniques such as L-curve method \cite{PCHansen_DPOleary_1993a}. In our case 
we choose a fixed $\mu$ for each example (no optimization was carried to select $\mu$). 
On the other hand selecting $S$ which is a function is much more delicate. One option is to use 
a bilevel strategy as in \cite{MHintermueller_CNRautenberg_2017a,MHintermueller_CNRautenberg_TWu_ALanger_2017a} to 
determine both $S$ and $V$ in an optimization framework. This is a part of our future work. In 
this paper we propose a different approach in Algorithm~\ref{Algorithm}.

Even though our targeted application is image denoising the approach we present is general 
enough to be applicable to a wider range of applications. We first notice that a typical 
image is given on a rectangular grid (pixels). Since for \eqref{eq:ext_var_trunc_disc} we are working on simplices we first need to interpolate the given image from the rectangular grid to a 
simplicial mesh. This is a delicate question especially given the fact that typically we only 
have access to a noisy image. Before we interpolate the noisy image onto a simplicial mesh
we perform an intermediate step. 

We solve ROF model \eqref{eq:tv_intro} using \cite{AChambolle_2004a} and $\zeta>0$ chosen to a fixed value for all examples. We stop the algorithm 
when the relative difference between two consecutive iterates is smaller 
than the given tolerance ${\rm tol_{tv}}$. We select a mild tolerance ${\rm tol_{tv}}$ 
in this step as we want to preserve the sharp features but still remove a certain noise.
We call the resulting solution as $u_{\rm tv}$. We then generate a piecewise linear Lagrange interpolant $I_{\mathscr{T}_\Omega} u_{\rm tv}$ of $u_{\rm tv}$.

We next evaluate 
$I_{\mathscr{T}_\Omega} u_{\rm tu}$ on the simplicial mesh. In order to reduce the computational
cost we use Adaptive Finite Element Method (AFEM) to generate the appropriate mesh. In the
nutshell, we start with a coarse mesh $\mathscr{T}_\Omega = \{E\}$ where $E$ is an element in
$\mathscr{T}_\Omega$. For each $E\in \mathscr{T}_\Omega$ we then evaluate gradient of 
$I_{\mathscr{T}_\Omega} u_{\rm tv}$ on $E$ and we denote this gradient by 
$\nabla I_{\mathscr{T}_\Omega} u_{\rm tv}|_{E}$. We use this gradient to define an edge 
indicator function, we call this edge indicator function as the estimator on $E$. Based on 
a marking strategy we then mark a subset of elements in $\mathscr{T}_\Omega$. Subsequently
we perform the mesh refinements. We execute this loop $N_{\rm refine}$ times. 

Finally we set $S$ so that it is close to 0 at the sharp edges (large gradient) 
in the image and close to 1 away from sharp edges. Intuitively smaller the $S$ the lesser
the smoothness and otherwise.

\begin{algorithm}[h!]
	\caption{Selection of $S$}
		\label{Algorithm}
	\textbf{Data:} $f\in L^2(\Omega,s(x)^2)$, $\zeta$, ${\rm tol_{tv}}$, $N_{\rm refine}$, 
	$\lambda$, $\beta$, $\nu$
	\begin{algorithmic}[1]
	\State \label{step1} Solve total variation minimization problem \eqref{eq:tv_intro}
	 with regularization 
	       parameter $\zeta$ and tolerance ${\rm tol_{tv}}$ using for instance \cite{AChambolle_2004a}. Generate a piecewise linear Lagrange interpolant 
	       $I_{\mathscr{T}_\Omega} u_{\rm tv}$ of $u_{\rm tv}$.
	       
	\State \label{step2} Construct an Adaptive Finite Element Method (AFEM) based on the following 
	iterative loop with $N_{\rm refine}$ iterations:
	       \[
	         \textrm{\bf Solve} \rightarrow 
	         \textrm{\bf Estimate} \rightarrow \textrm{\bf Mark} \rightarrow 
	          \textrm{\bf Refine} 
	       \]
	       We describe each of these modules next:
	       \begin{enumerate}[a.]
	        \item {\bf Solve}: For a given triangulation $\mathscr{T}_\Omega  = \{E\}$     
	        evaluate the elementwise gradient of $I_{\mathscr{T}_\Omega} u_{\rm tv}$. 
	        We denote the gradient on each element $E$ by 
	        $\nabla I_{\mathscr{T}_\Omega} u_{\rm tv}|_E$. 
	        \item {\bf Estimate}:  Use the edge indicator function        
	       \[
              \mathcal{E}(E;\lambda) 
              = 1 - \left(1 + \lambda^{-2} 
               |\nabla I_{\mathscr{T}_\Omega}u_{\rm tv} |_{E}|^2\right)^{-1} \quad 
                  \forall E \in \mathscr{T}_\Omega 
           \]
           as an estimator. Here, $\lambda > 0$ is a given parameter. 
	        \item {\bf Mark}: Use the D\"orfler marking strategy \cite{WDoerfler_1996a} (bulk chasing criterion) with parameter $\beta \in (0,1]$. Select a set $\mathcal{M} \subset \mathscr{T}_\Omega$ fulfilling 
	        \[
	         \sum_{E \in \mathcal{M}} \mathcal{E}(E;\lambda)^2
	          \ge \beta \sum_{E \in \mathscr{T}_\Omega} \mathcal{E}(E;\lambda)^2 .
	        \]  
	        \item {\bf Refine}: Generate a new mesh $\mathscr{T}_\Omega' = \{E'\}$ by bisecting all the 
	        elements contained in $\mathcal{M}$ using the newest-vertex bisection algorithm 
	        \cite{RHNochetto_KGSiebert_AVeeser_2009a}. 
	       \end{enumerate}	     	        
    \State \label{step3} On each element $E' \in \mathscr{T}_\Omega'$, set $S(E') = 1-\mathcal{E}(E';\nu)$.       
	\end{algorithmic}
\end{algorithm}
Once we have $S$ then we can immediately solve the linear equation \eqref{eq:ext_var_trunc_disc} 
for $V$.

\section{Numerical Examples}\label{s:num_ex}

In this section we illustrate the proposed scheme with the help of several examples. We consider $f$ given by $f=u_{\mathrm{true}}+\xi$ where $u_{\mathrm{true}}$ is the true image, and $\xi$ is noise with the following properties: $\int_\Omega\xi=0$, and $\int_\Omega|\xi|^2=\sigma^2$ for a known  $\sigma>0$.
In all cases we find $S$ by using Algorithm~\ref{Algorithm} and then solve the 
resulting linear equation \eqref{eq:ext_var_trunc_disc} for $V$. We call $\tr V$ as
the reconstruction. We also compare our reconstruction $\tr V$ with the 
reconstruction we obtain by solving \eqref{eq:tv_intro} with ${\rm tol}^*_{\rm tv} = 10e$-8. 
In this case we choose the parameter $\zeta$ so that a normalized weighted sum of Peak Signal to 
Noise Ratio (PSNR) and the Structural Similarity (SSIM) index is maximized, as in  
section~4.2 of \cite{MHintermueller_CNRautenberg_TWu_ALanger_2017a}. That is, we compare our results to the ones of the TV model with an optimized parameter $\zeta>0$. Note that this parameter is not the one used in step \ref{step1} from algorithm \ref{Algorithm}.

\subsection{Example 1: circle, triangle and square}

In our first example we consider an image with a circle, triangle and a square as shown in
Figure~\ref{f:ex_1_sigma10} (top row left). Notice that in this case the right pointing edge of
the triangle does not align with the grid. 
We consider two different noise levels. In both cases the noise is normally distributed with mean 0 and standard deviations 
0.10 and 0.15 respectively. The results are shown in Figures~\ref{f:ex_1_sigma10} and 
\ref{f:ex_1_sigma15} for standard deviations $0.10$ and $0.15$, respectively. In both cases
we first compute $S$ by using Algorithm~\ref{Algorithm} where we have set:
$\zeta = 0.2$, 
${\rm tol}_{\rm tv} = 10e$-4, 
$N_{\rm refine} = 8$,
$\lambda = 300$, 
$\beta = 0.99$,
$\nu = 200$
in Algorithm~\ref{Algorithm}. We further set $\mu = 8050$ in \eqref{eq:ext_var_trunc_disc}.
We then solve for $V$ using PCG. We call $\tr V$ as our reconstruction.  

The top row of Figure~\ref{f:ex_1_sigma10} shows the original and noisy images (left to 
right). The middle row shows $u_{\rm tv}$ and $S$ obtained using Algorithm~\ref{Algorithm}. 
In the bottom row we compare the results using the TV approach (left) and $\tr V$ (right) 
computed using
our approach by solving \eqref{eq:ext_var_trunc_disc}. Notice that Figure~\ref{f:ex_2_diffview_intro} is simply obtained by viewing Figure~\ref{f:ex_1_sigma10}, 
in particular, the noisy, reconstruction using TV and $\tr V$ panels, from a different angle.
Similar description holds for Figure~\ref{f:ex_1_sigma15}. Figure~\ref{f:ex_2_diffview_15} 
is again viewing Figure~\ref{f:ex_1_sigma15} from a different angle.

As we can notice TV tends to round up the corners (cf. Figure~\ref{f:ex_2_diffview_intro} (middle)). On the other hand as our theory predicted we can truly capture the edges in 
$\tr V$ (cf. Figure~\ref{f:ex_2_diffview_intro} (right)). This is further corroborated by Table~\ref{t:ex_1} where we have shown a comparison between PSNR and SSIM for these two 
approaches for two different standard deviations.

\begin{table}[h!]
\centering
\begin{tabular}{|l|l|l|l|l|} \hline 
$\sigma$  &  PSNR (TV)  & PSNR (New) & SSIM (TV)   & SSIM (New)  \\ \hline
 0.1      &  3.7299e+01 & 4.8147e+01 & 9.4016e-01  & 9.5710e-01  \\ \hline 
 0.15     &  3.5712e+01 & 4.0451e+01 & 9.3439e-01  &  9.4890e-01 \\ \hline
\end{tabular}
\caption{\label{t:ex_1}Example 1: PSNR and SSIM using two 
different standard deviations ($\sigma = 0.1$ and $\sigma = 0.15$) using
TV and proposed scheme (New).}
\end{table}

\begin{figure}[h!]
\centering
\includegraphics[width=0.49\textwidth]{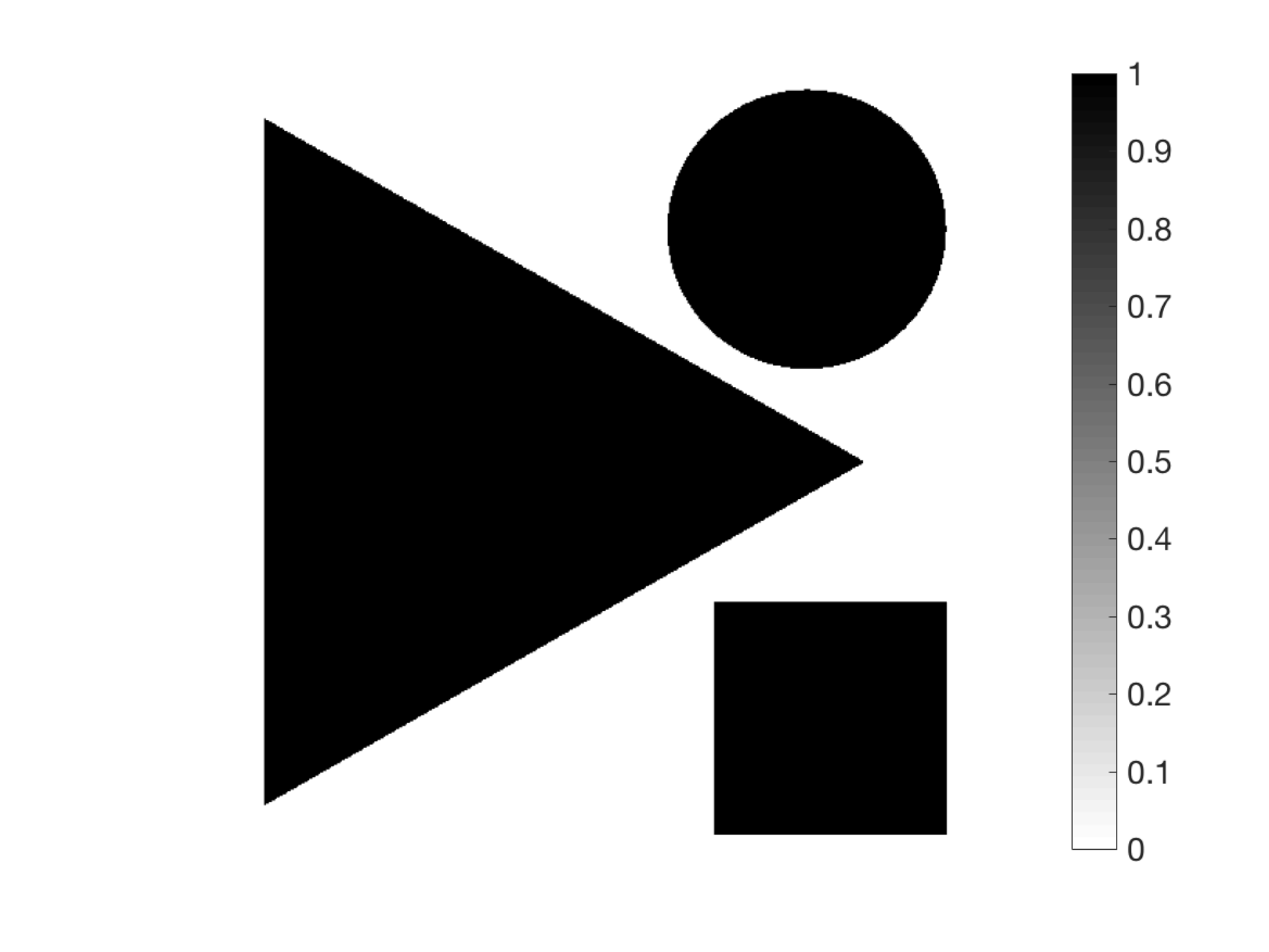}
\includegraphics[width=0.49\textwidth]{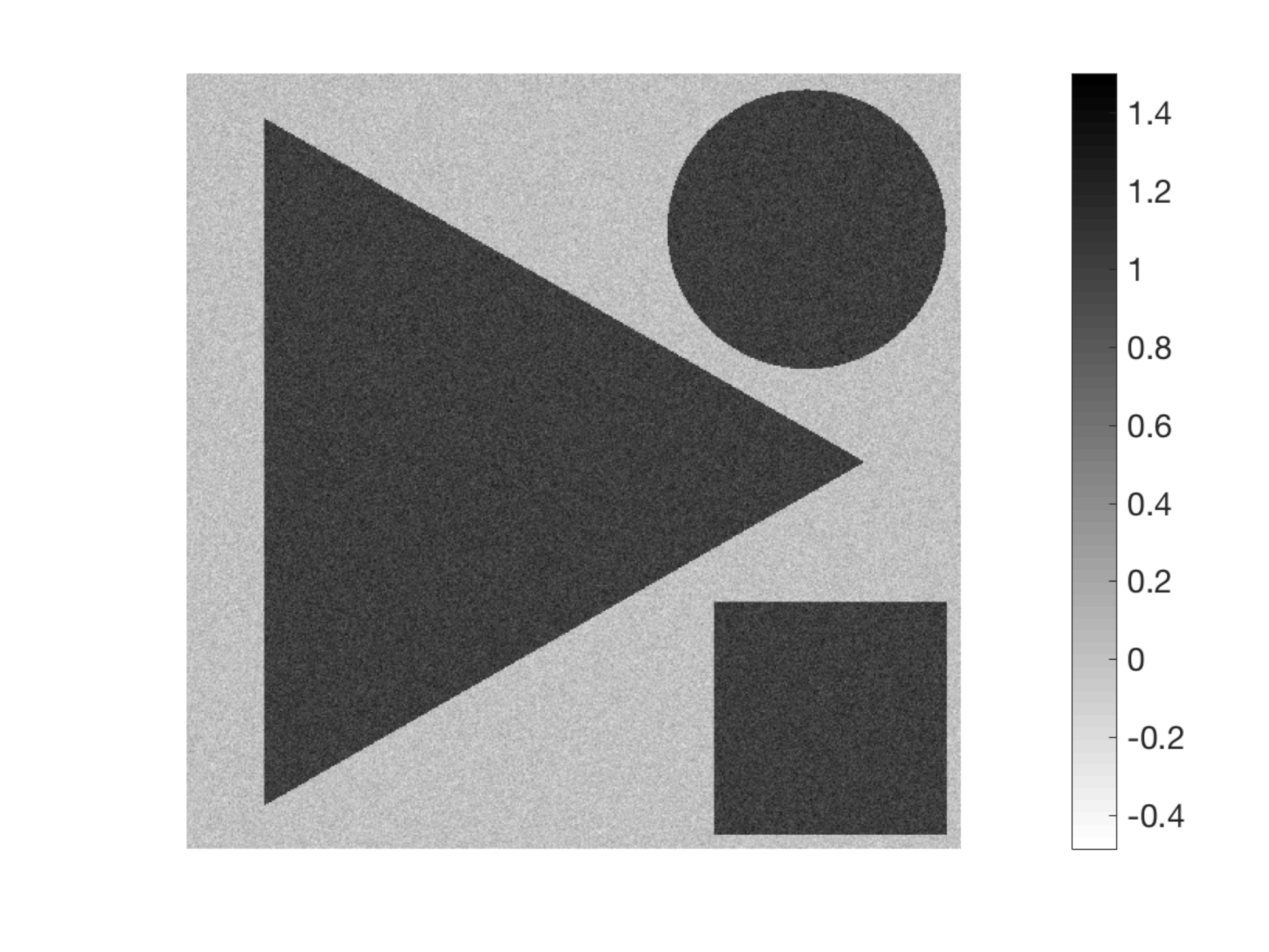}
\includegraphics[width=0.49\textwidth]{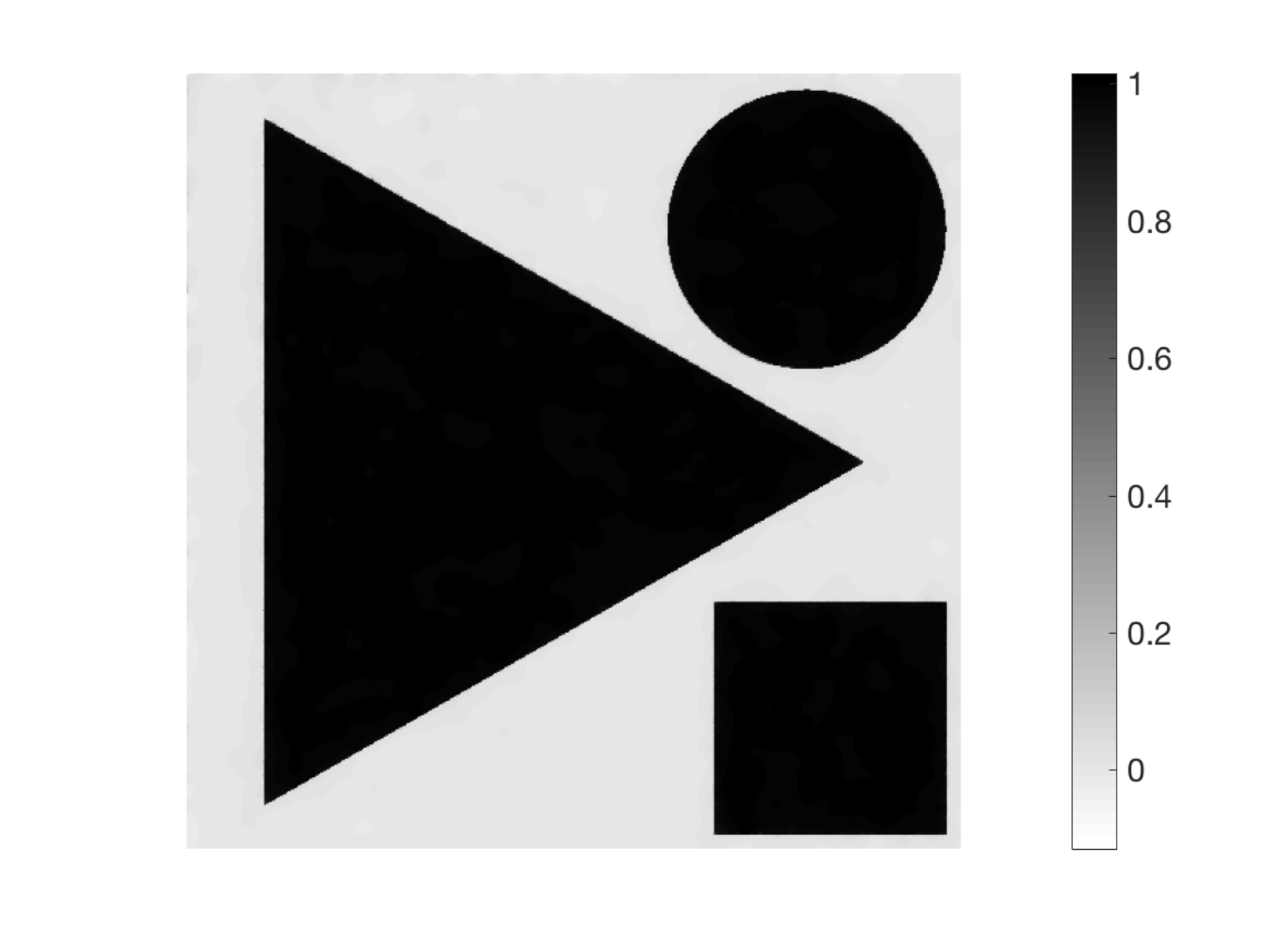}
\includegraphics[width=0.49\textwidth]{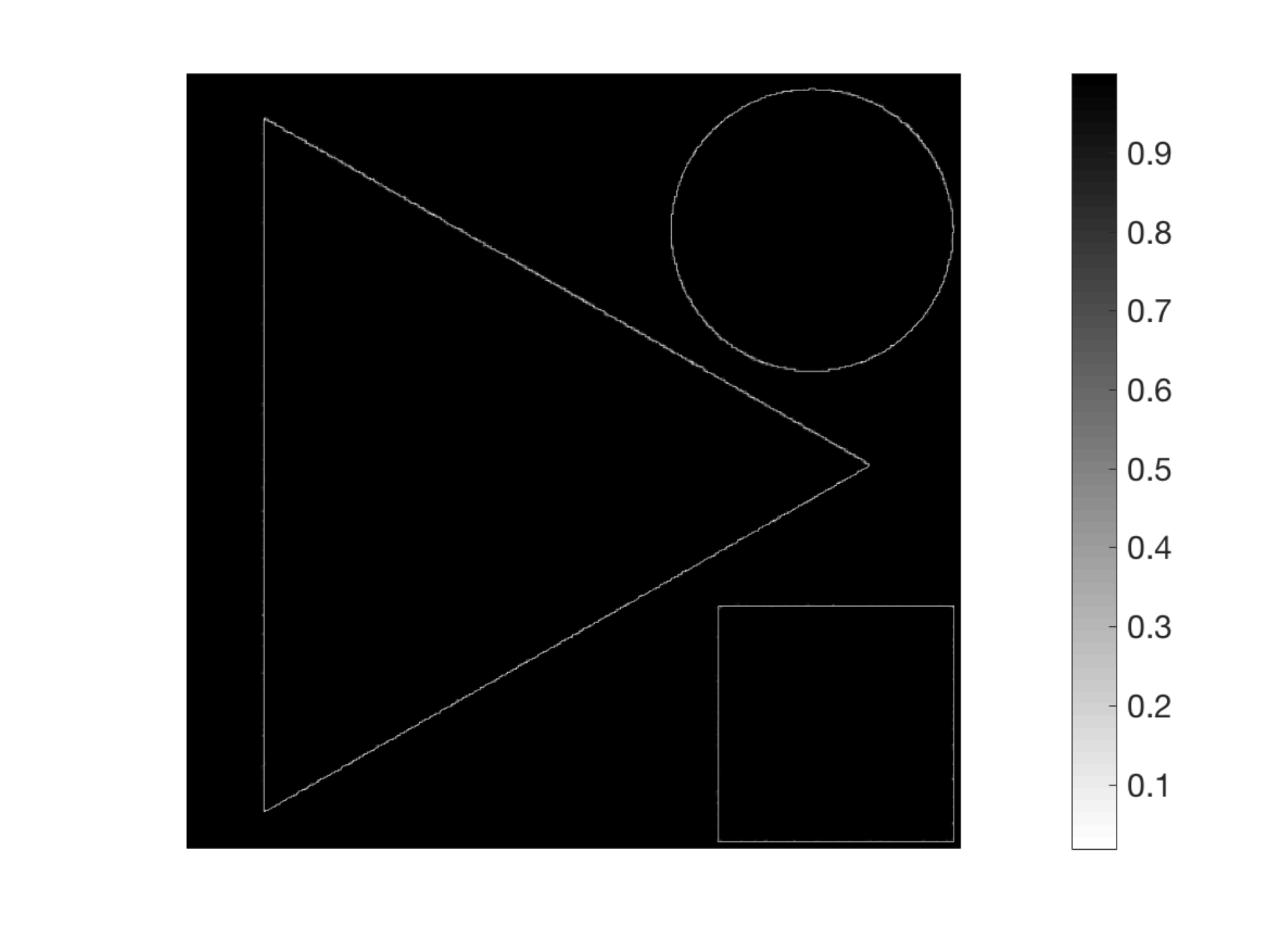}
\includegraphics[width=0.49\textwidth]{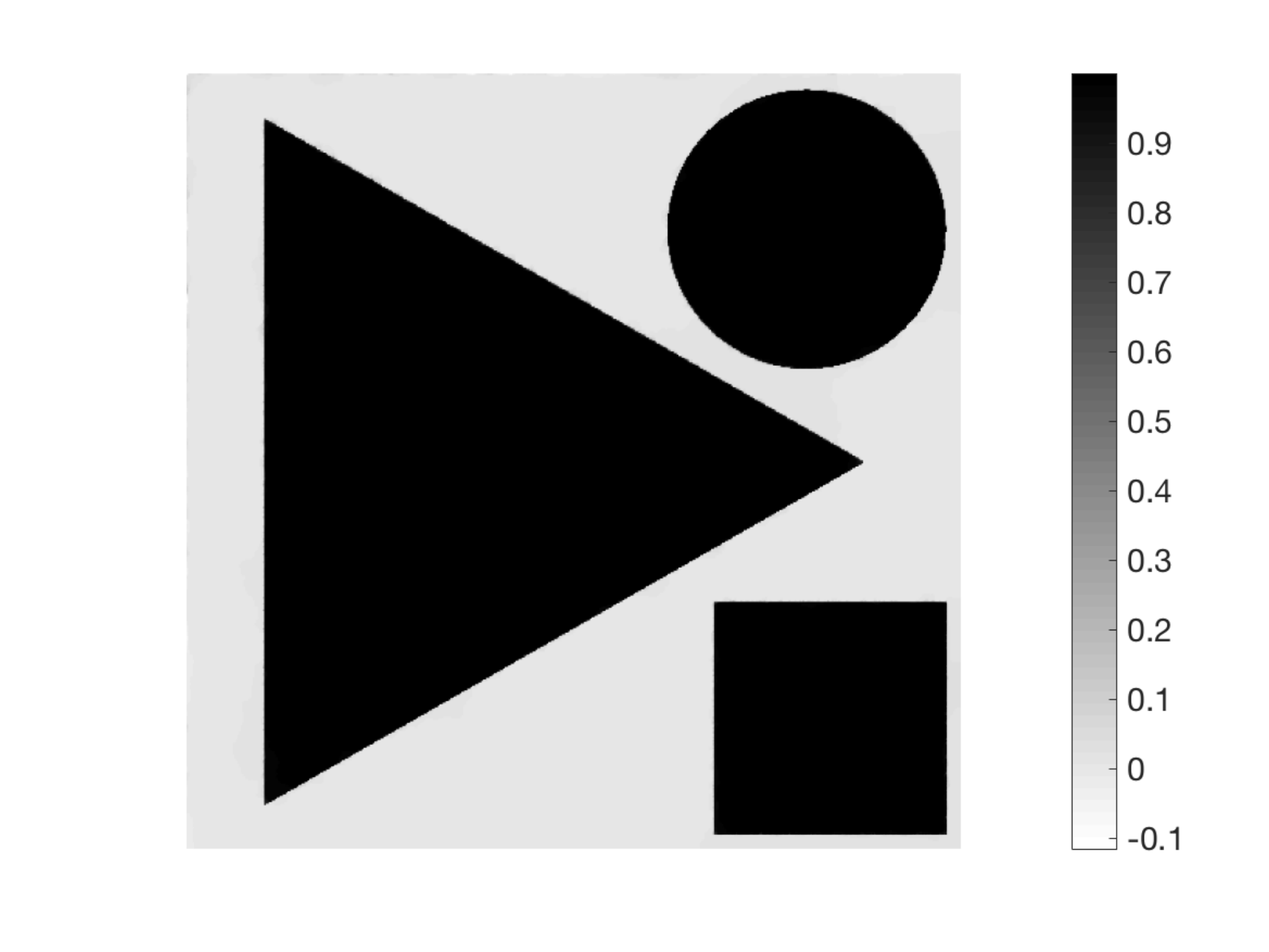}
\includegraphics[width=0.49\textwidth]{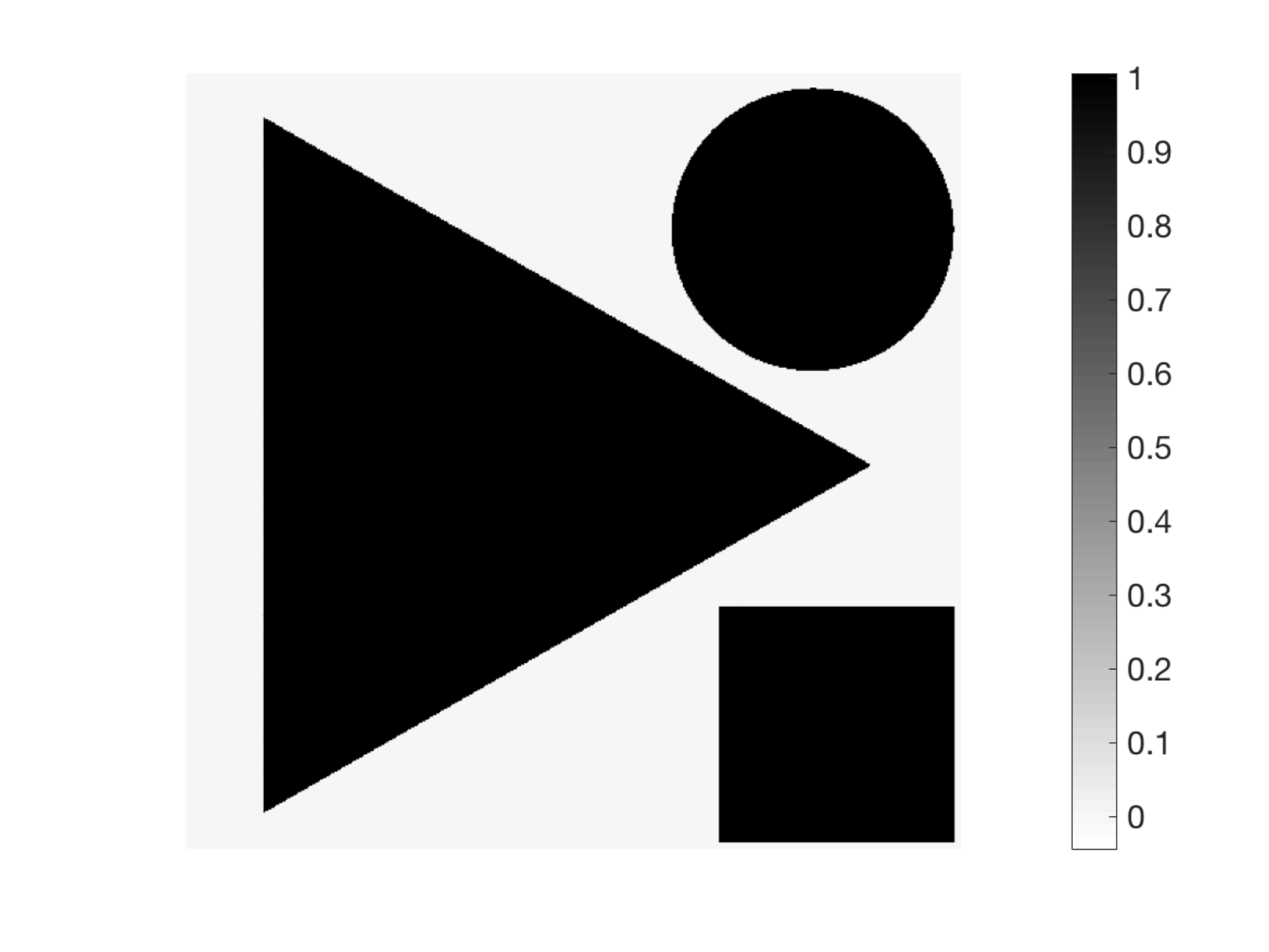}
\caption{\label{f:ex_1_sigma10}
Example 1 ($\sigma=0.1$). Top row (from left to right): original and noisy images, respectively. Middle row: $u_{\rm tv}$ (left) from Step~\ref{step1} in Algorithm~\ref{Algorithm} and the corresponding $s$  (right) from Step~\ref{step3}.
Bottom row: reconstruction using  total variation with optimized $\zeta>0$ (left) and our approach (right), respectively.}
\end{figure}

\begin{figure}[h!]
\centering
\includegraphics[width=0.49\textwidth]{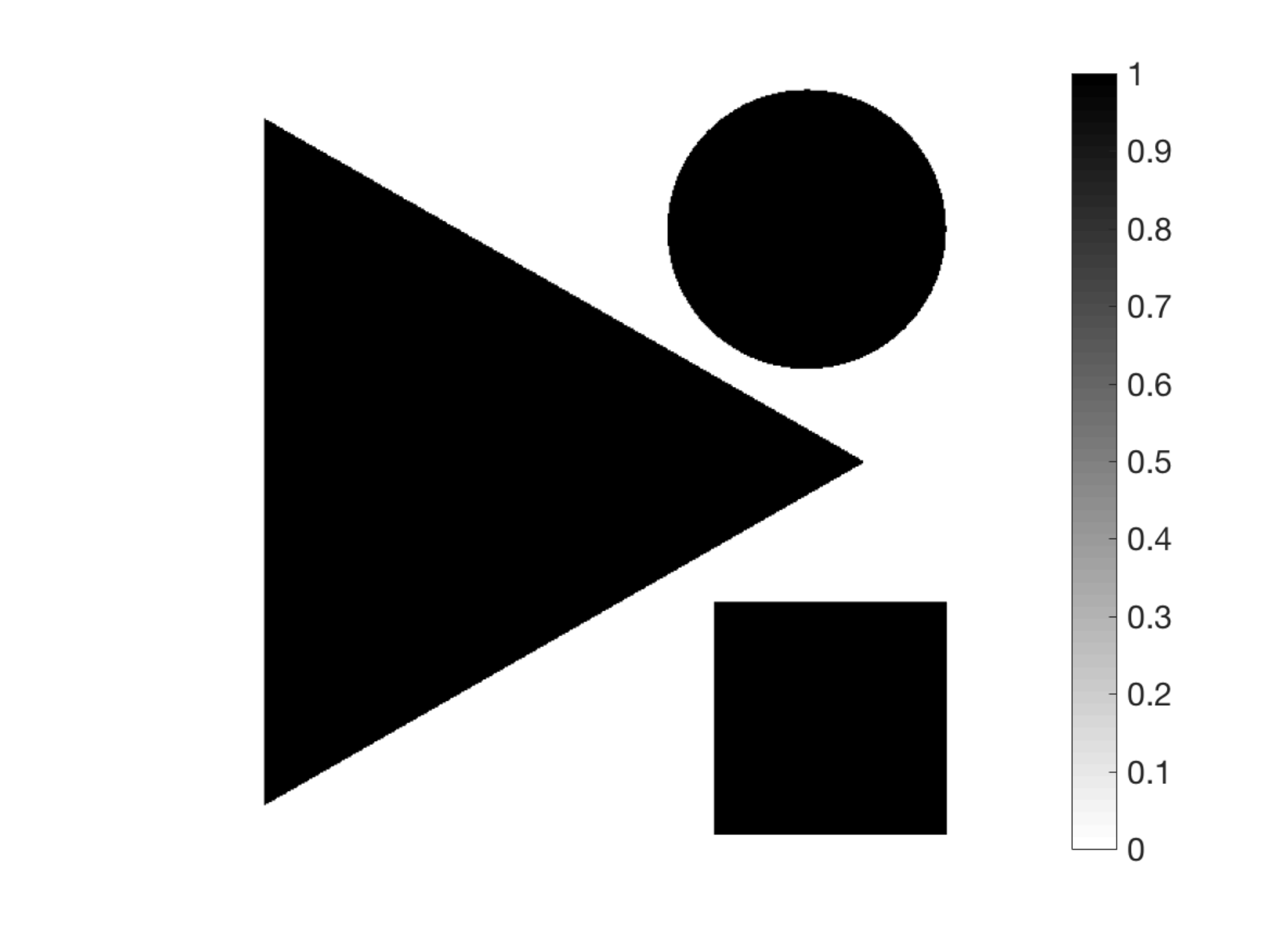}
\includegraphics[width=0.49\textwidth]{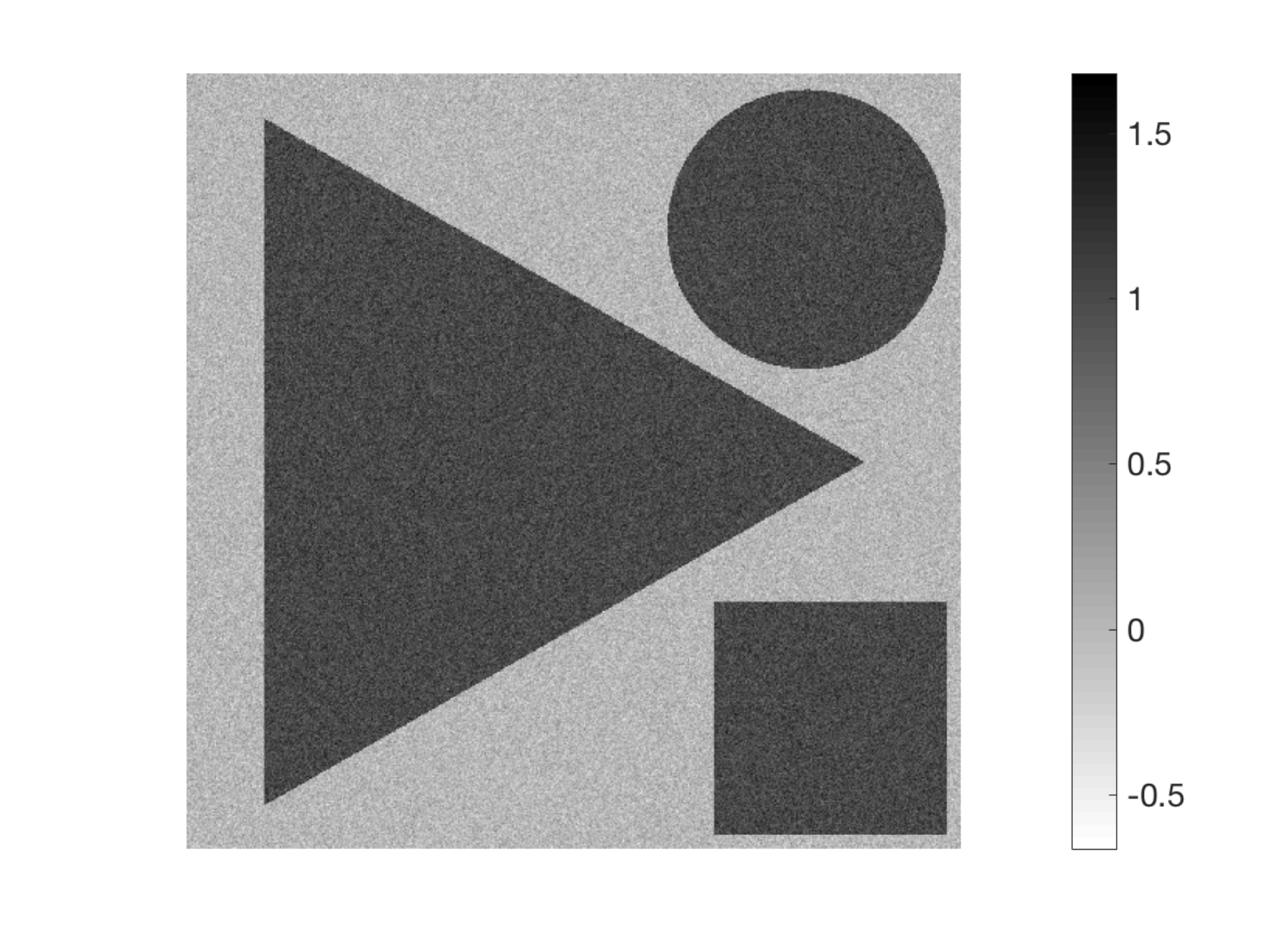}
\includegraphics[width=0.49\textwidth]{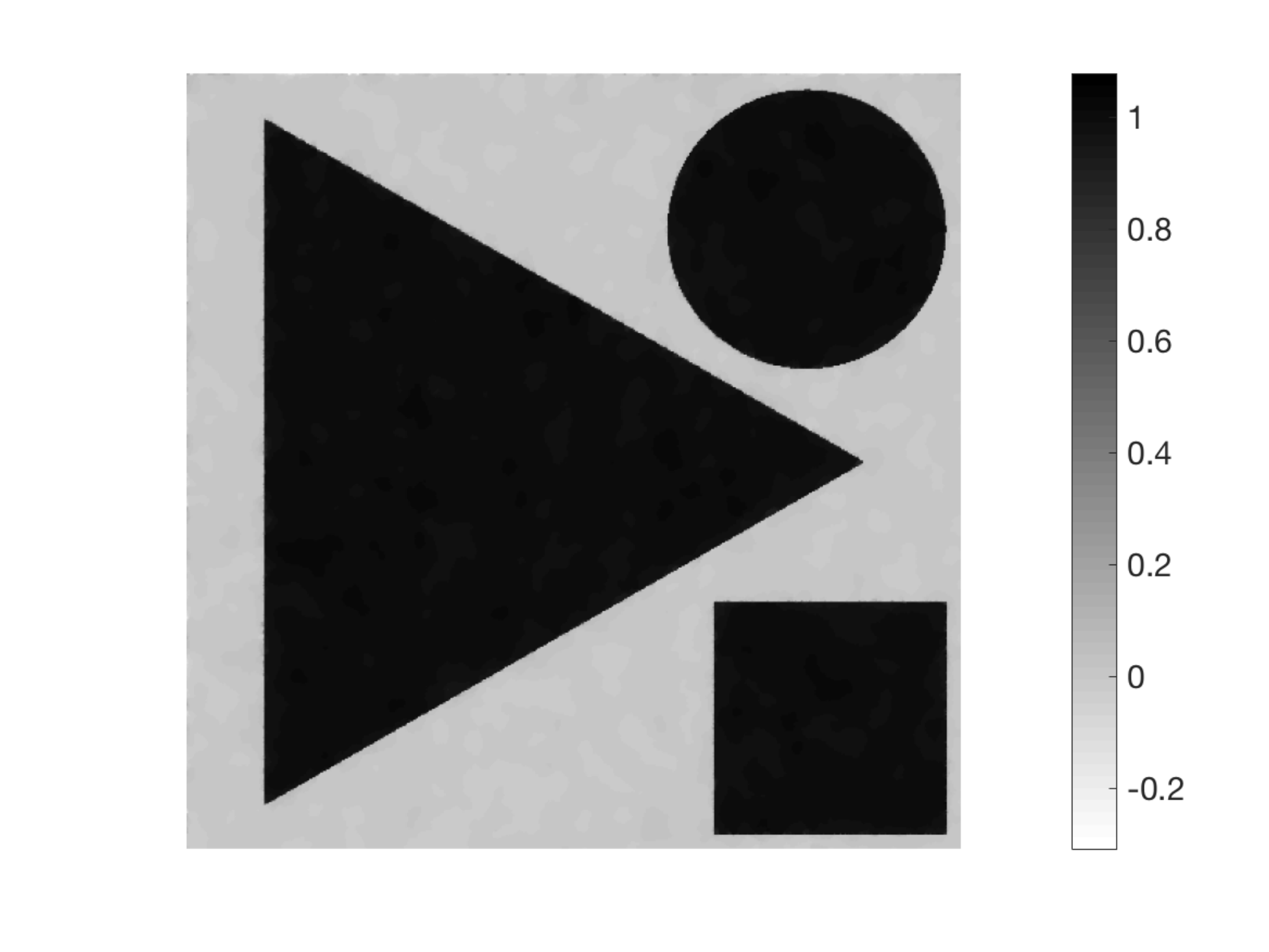}
\includegraphics[width=0.49\textwidth]{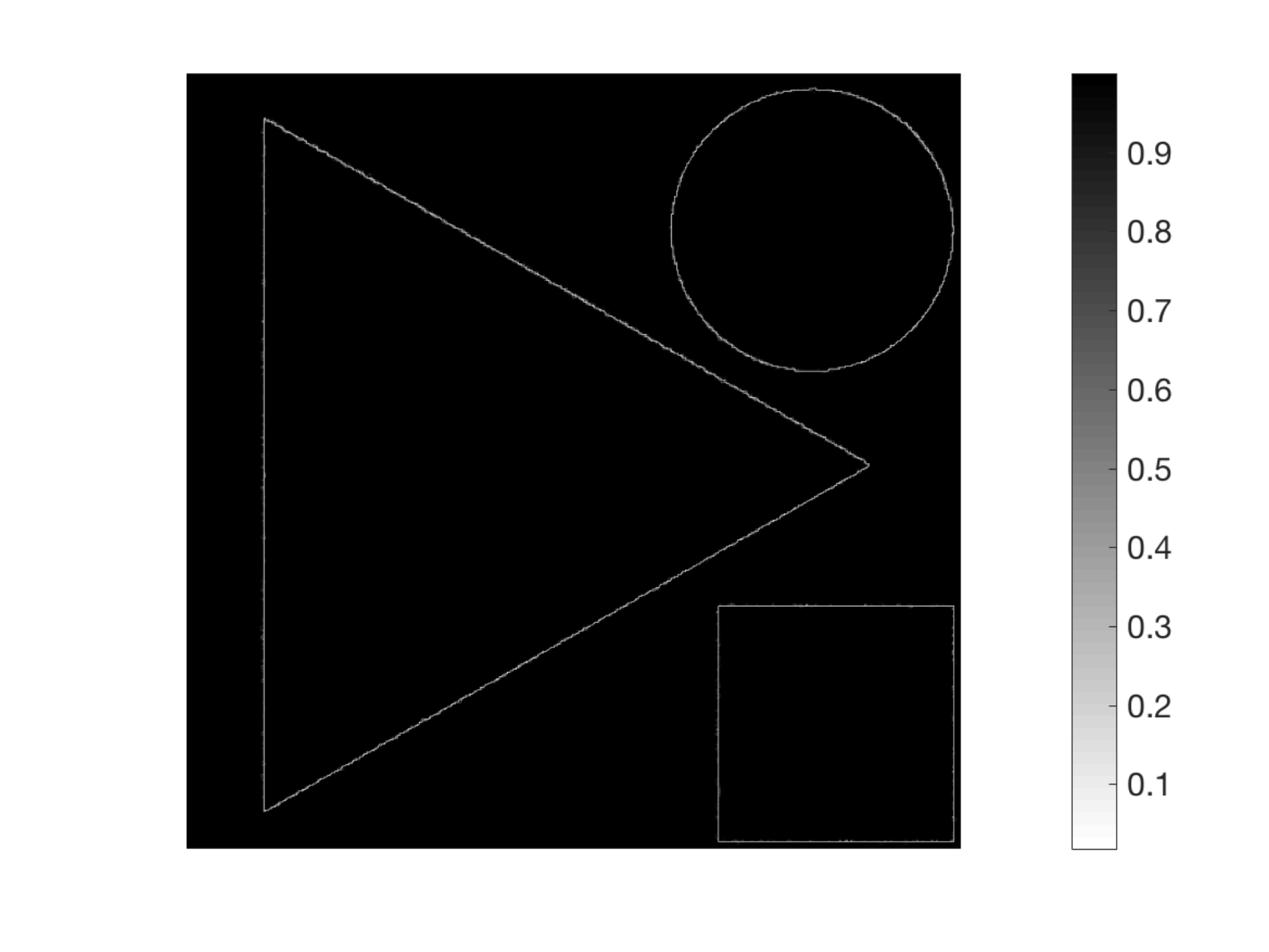}
\includegraphics[width=0.49\textwidth]{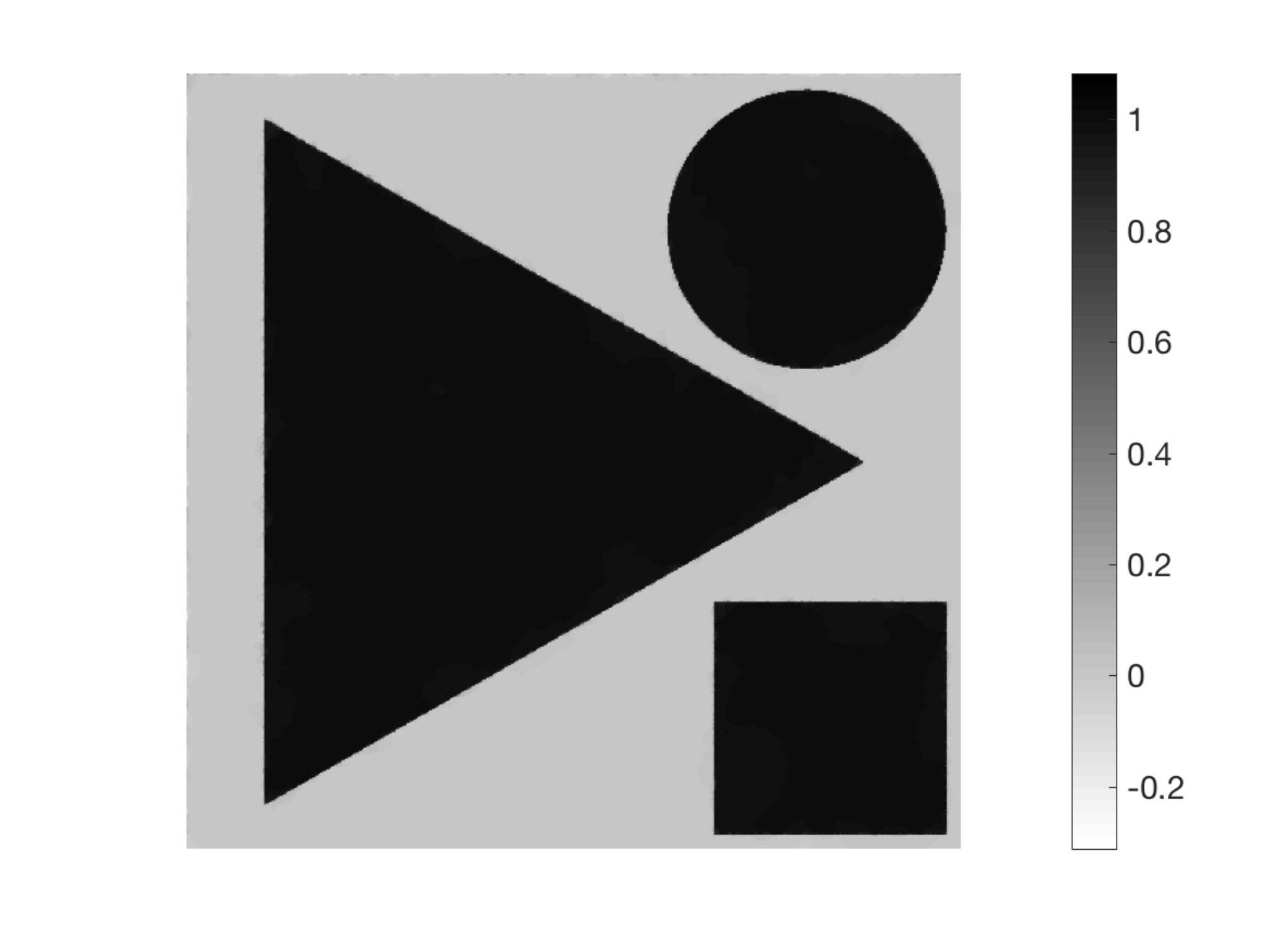}
\includegraphics[width=0.49\textwidth]{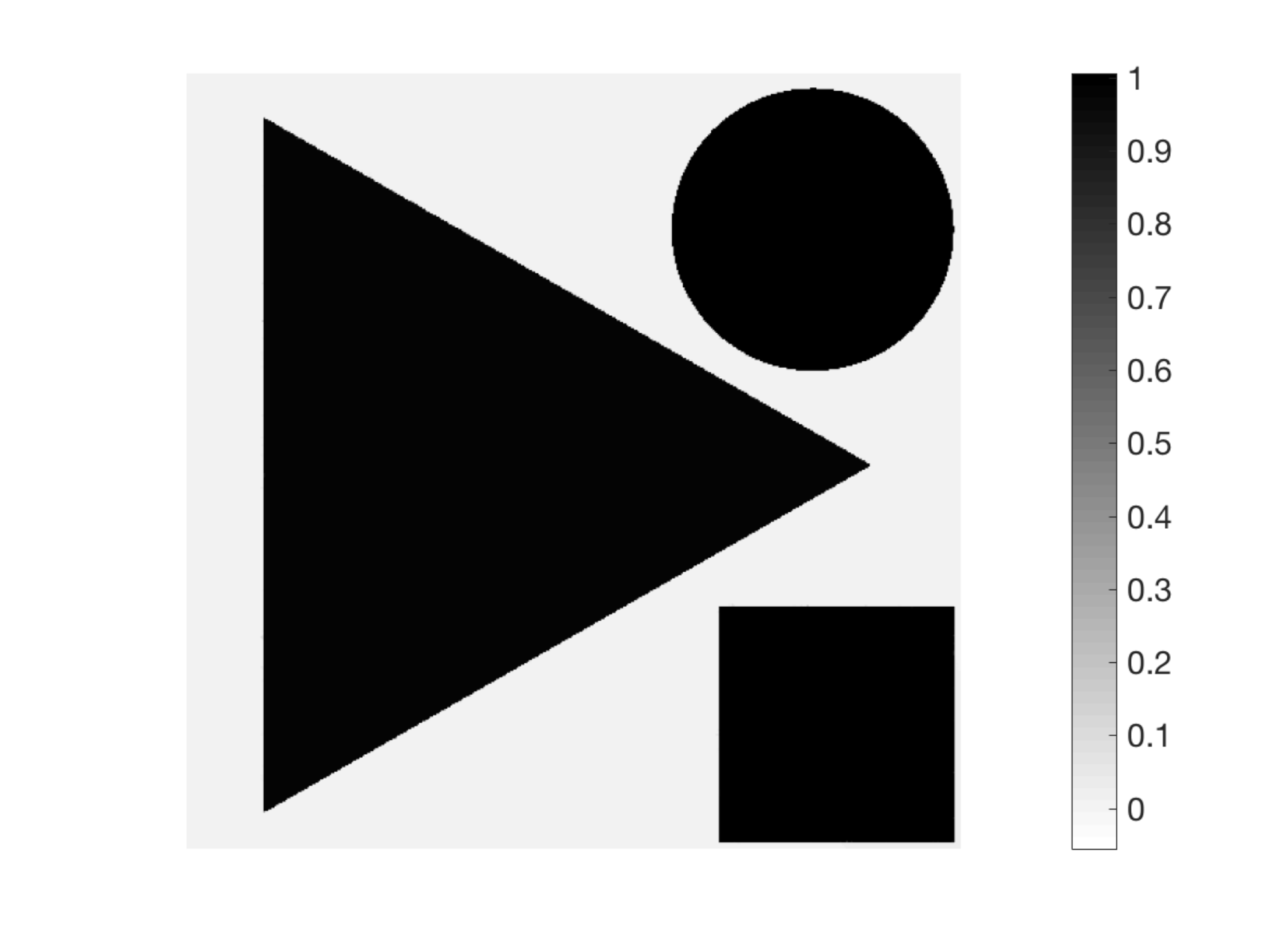}
\caption{\label{f:ex_1_sigma15}
Example 1 ($\sigma=0.15$). Top row (from left to right): original and noisy images, respectively. Middle row: $u_{\rm tv}$ (left) from Step~\ref{step1} in Algorithm~\ref{Algorithm} and the corresponding $s$  (right) from Step~\ref{step3}.
Bottom row: reconstruction using  total variation with optimized $\zeta>0$ (left) and our approach (right), respectively.}
\end{figure}

\begin{figure}[h!]
\centering
\hspace{-1.8cm}
\includegraphics[width=0.4\textwidth]{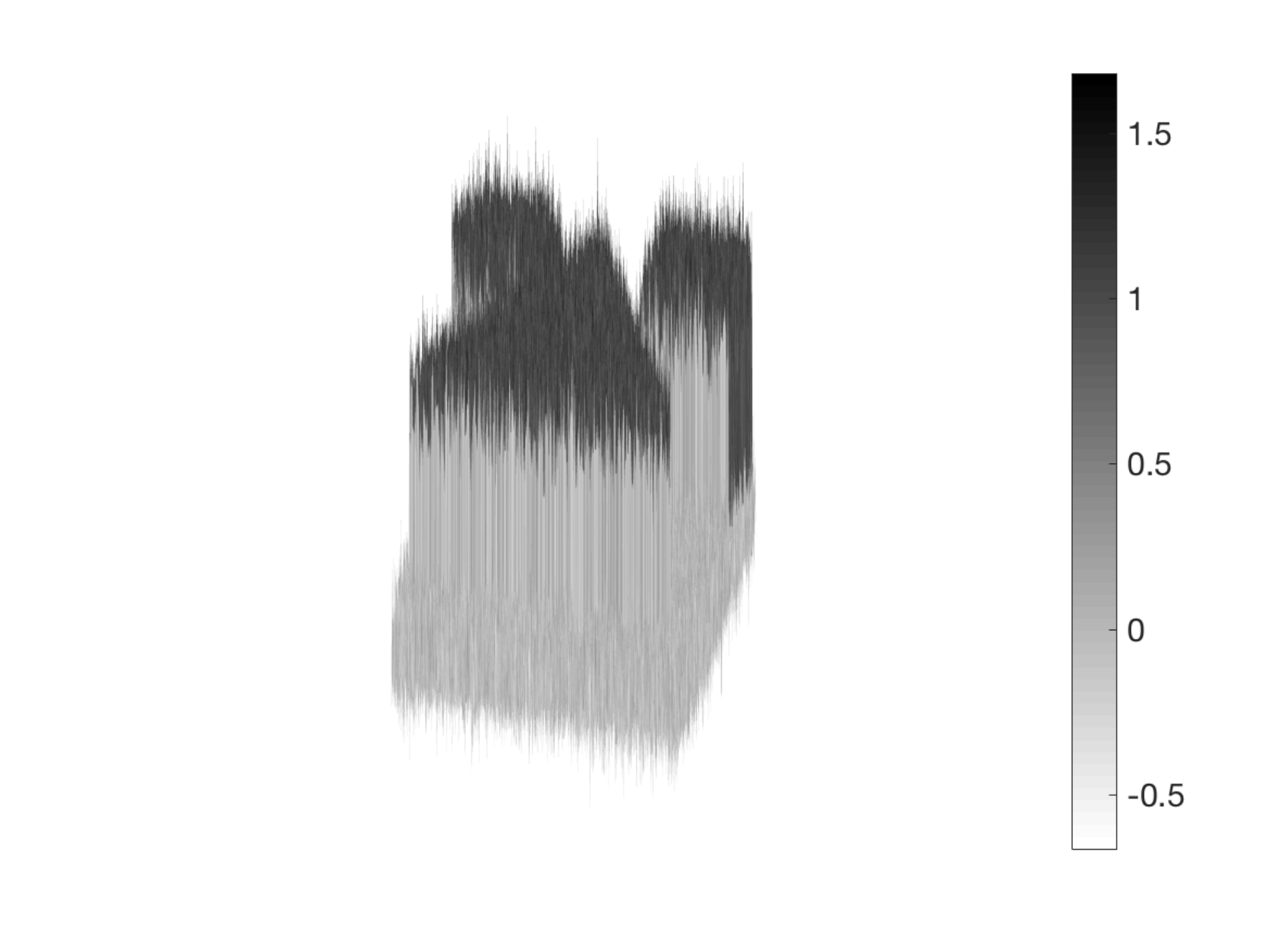}
\hspace{-0.59cm}
\includegraphics[width=0.4\textwidth]{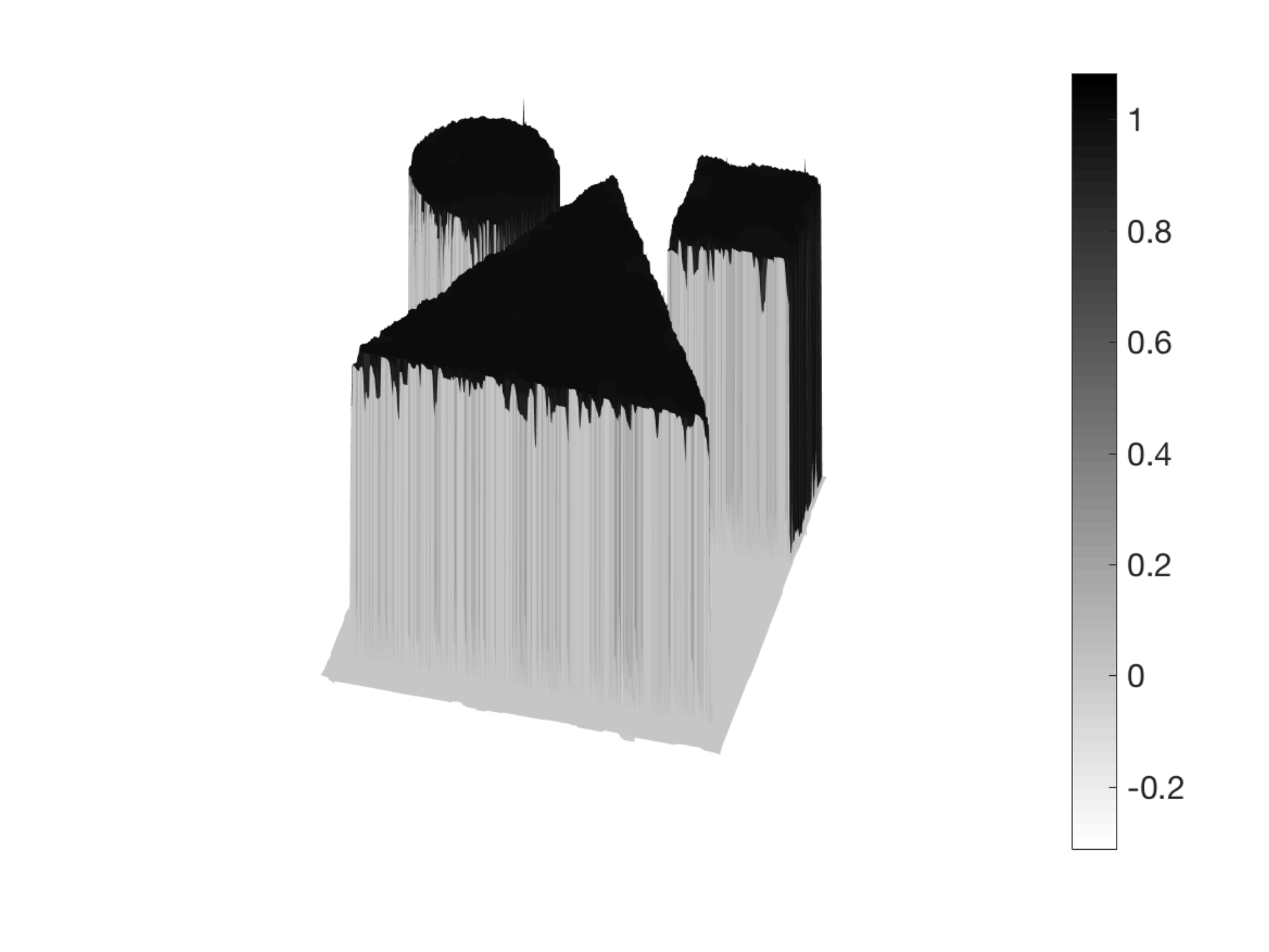}
\hspace{-0.62cm}
\includegraphics[width=0.4\textwidth]{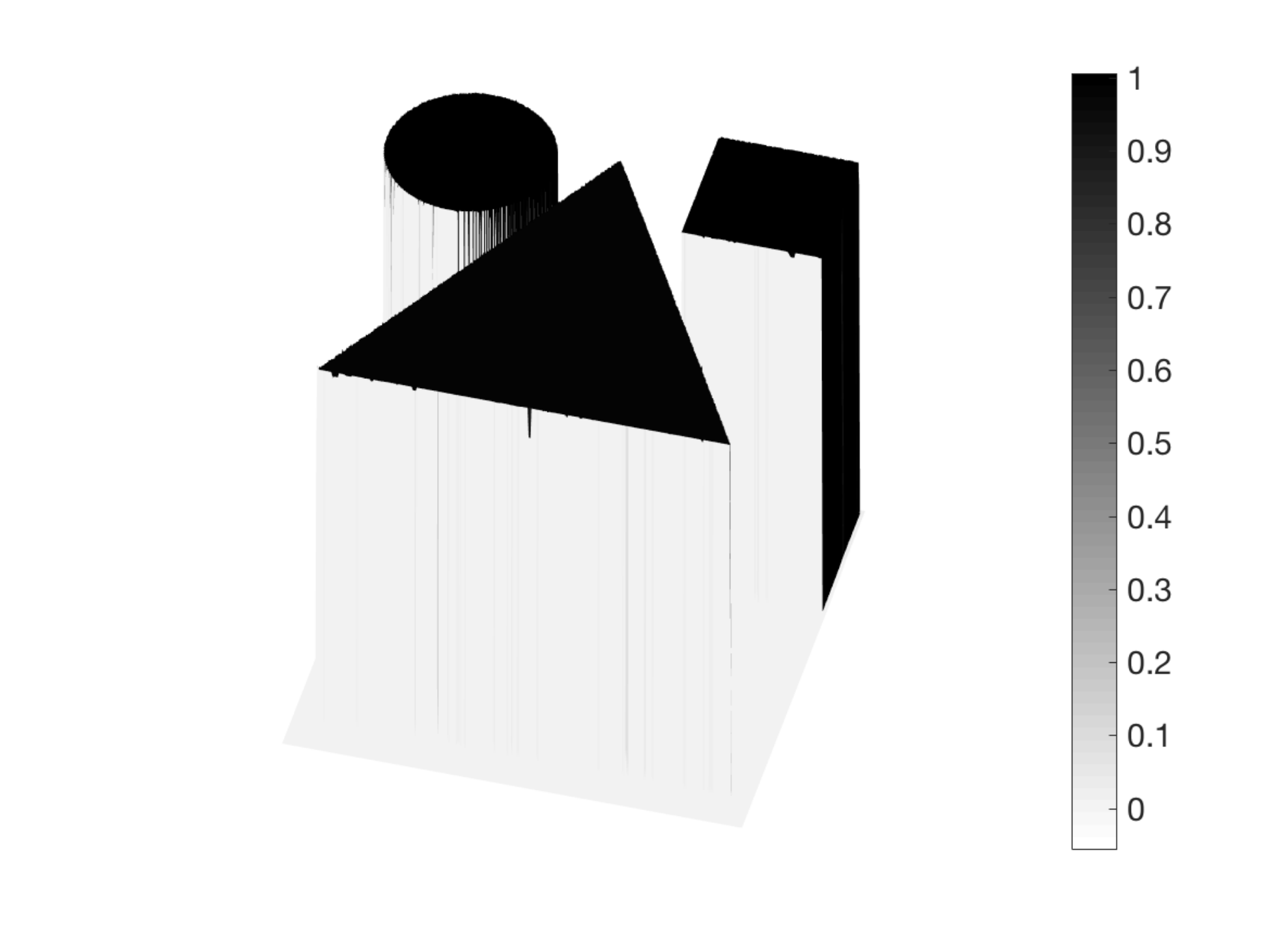}
\hspace{-0.8cm}
\caption{\label{f:ex_2_diffview_15} Example 1 ($\sigma = 0.15$). Left panel: noisy image. Middle panel: reconstruction
using Total Variation (TV) approach with optimal $\zeta>0$. Right panel: reconstruction using 
the our approach. TV tends to smooth out the edges and corners 
that are not aligned with the grid. On the other hand it is possible to have perfect 
recovery using our approach.}
\end{figure}

\subsection{Example 2: stripes}

In our second example we consider 6 stripes with intensities equal to 0.8, 0.7, 0.6, 0.5, 0.4 
and 0.35 (from left to right), respectively (cf. Figure~\ref{f:ex_2_sigma10} (top row left)). As in the previous example
we again consider two additive noise levels with mean 0 standard deviations $0.1$ and $0.15$ respectively. Our results are shown in Figures~\ref{f:ex_2_sigma10} and \ref{f:ex_2_sigma15} respectively. Here again we first find $S$ by using Algorithm~\ref{Algorithm} using the parameters
$\zeta = 0.2$, 
${\rm tol}_{\rm tv} = 10e$-4, 
$N_{\rm refine} = 8$,
$\lambda = 15$, 
$\beta = 0.99$,
$\nu = 100$. 
We further set $\mu = 2900$ in \eqref{eq:ext_var_trunc_disc}.
We then solve for $V$. We call $\tr V$ as our reconstruction. 
A comparison between PSNR and SSIM is shown in Table~\ref{t:ex_2}.
As we noticed in the previous example we again obtain visually almost
perfect reconstruction using our approach. 

\begin{table}[h!]
\centering
\begin{tabular}{|l|l|l|l|l|} \hline 
$\sigma$  &  PSNR (TV)  & PSNR (New) & SSIM (TV)   & SSIM (New)  \\ \hline
 0.1      &  3.6253e+01 & 2.7917e+01 & 9.0799e-01  &  9.4789e-01 \\ \hline 
 0.15     &  3.3616e+01 & 2.7419e+01 & 8.7028-01  &  9.3000e-01 \\ \hline
\end{tabular}
\caption{\label{t:ex_2}Example 2: PSNR and SSIM using two 
different standard deviations ($\sigma = 0.1$ and $\sigma = 0.15$) using
TV and proposed scheme (New).}
\end{table}

\begin{figure}[h!]
\centering
\includegraphics[width=0.49\textwidth]{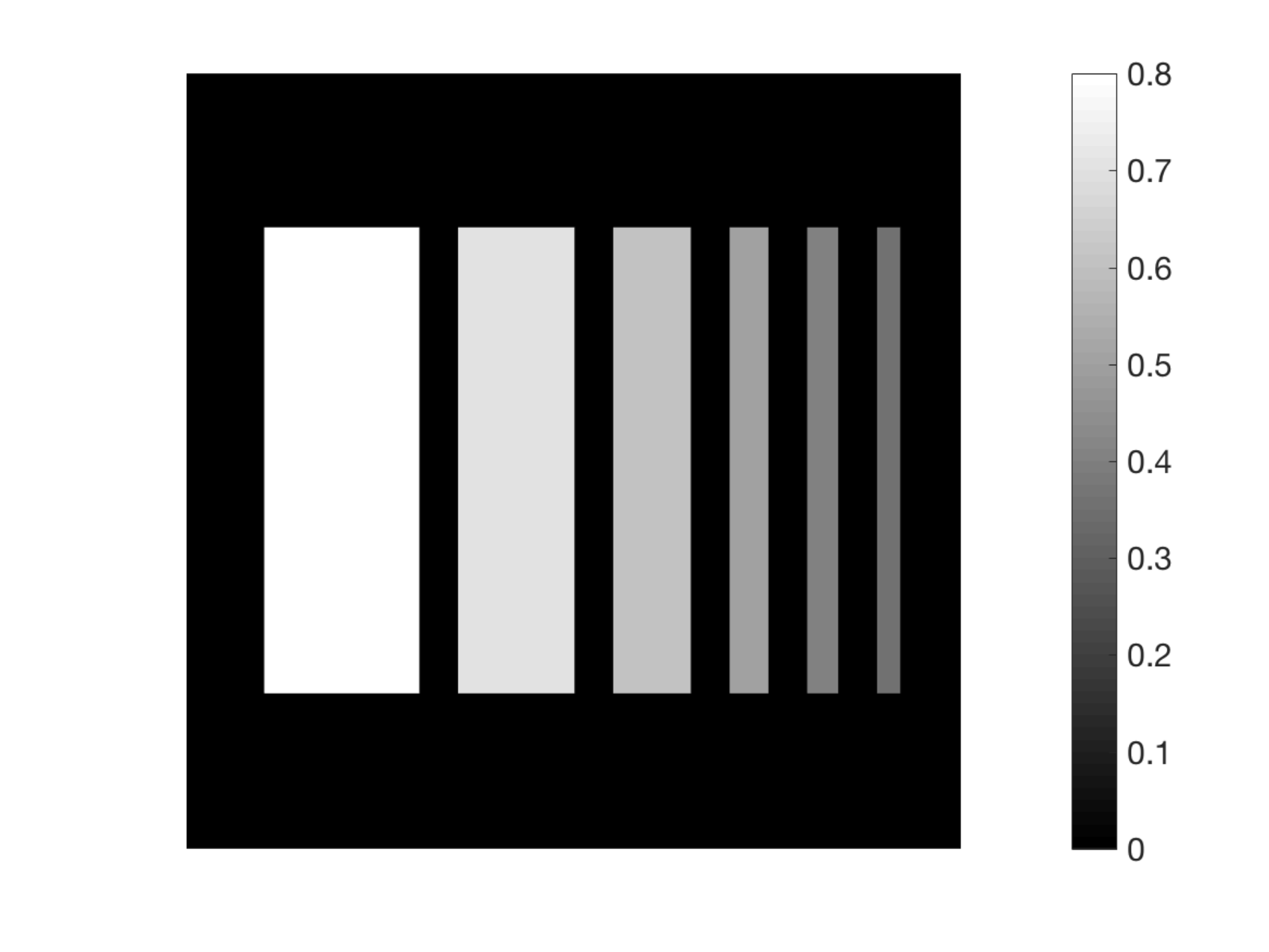}
\includegraphics[width=0.49\textwidth]{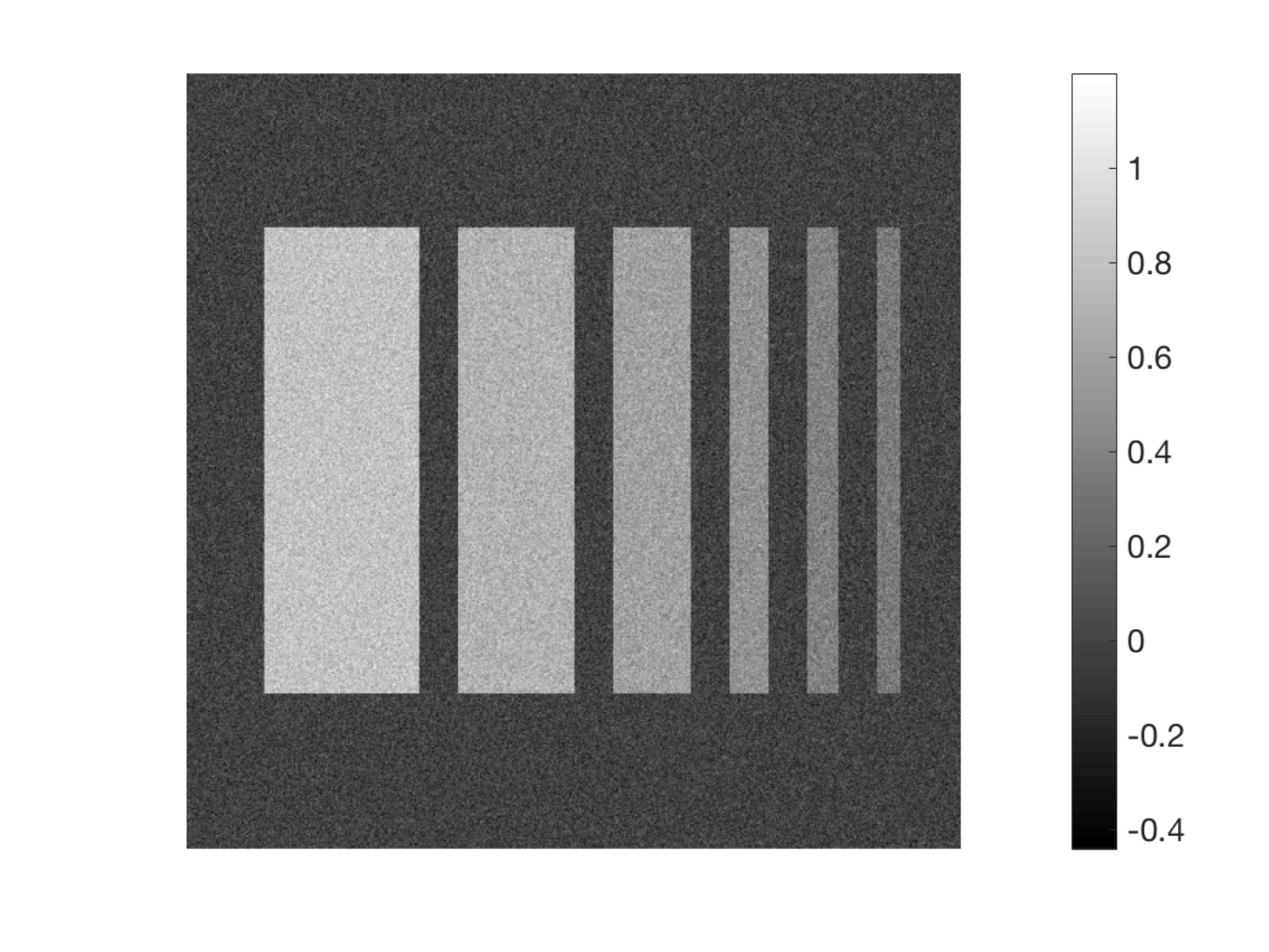}
\includegraphics[width=0.49\textwidth]{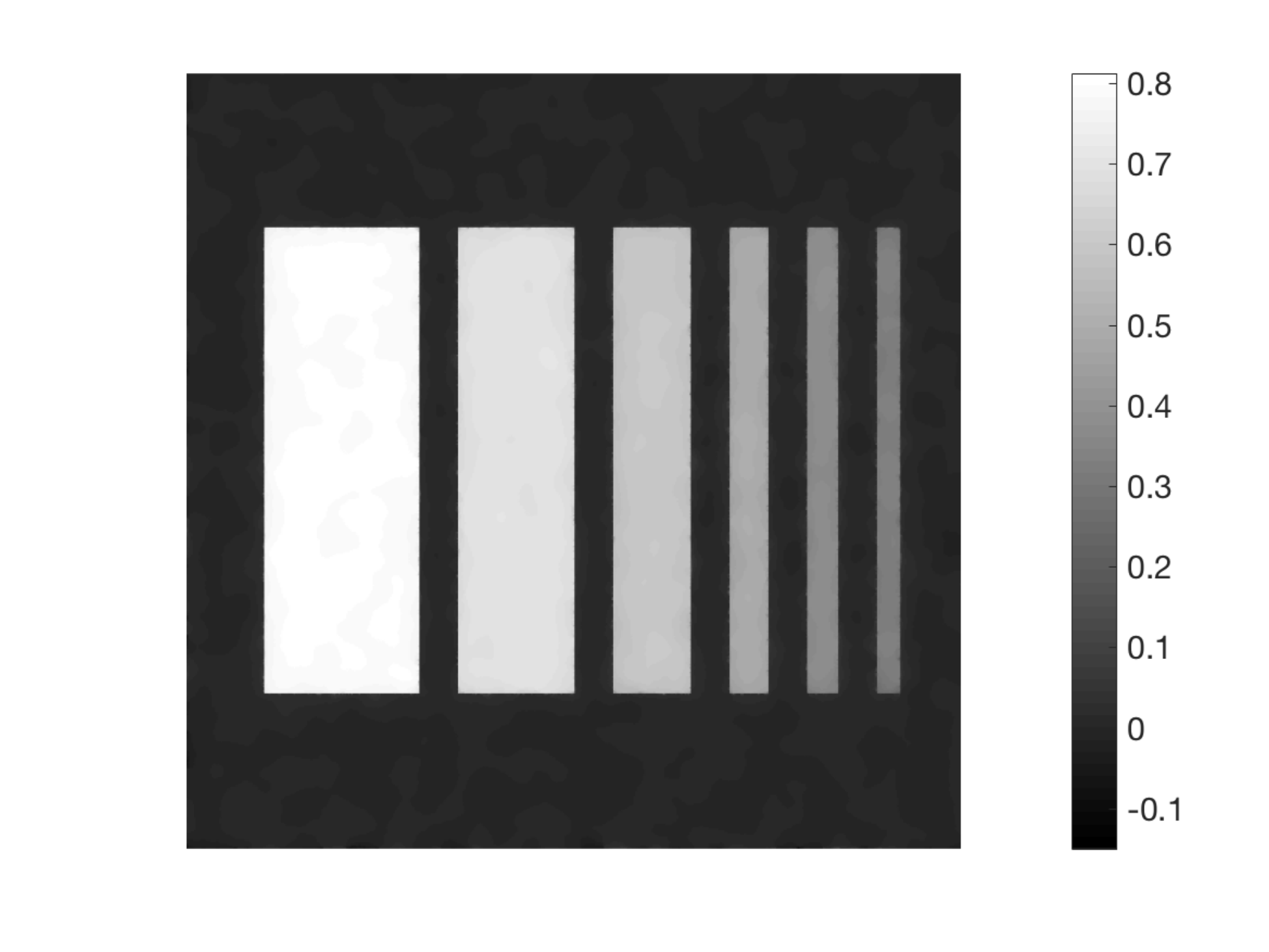}
\includegraphics[width=0.49\textwidth]{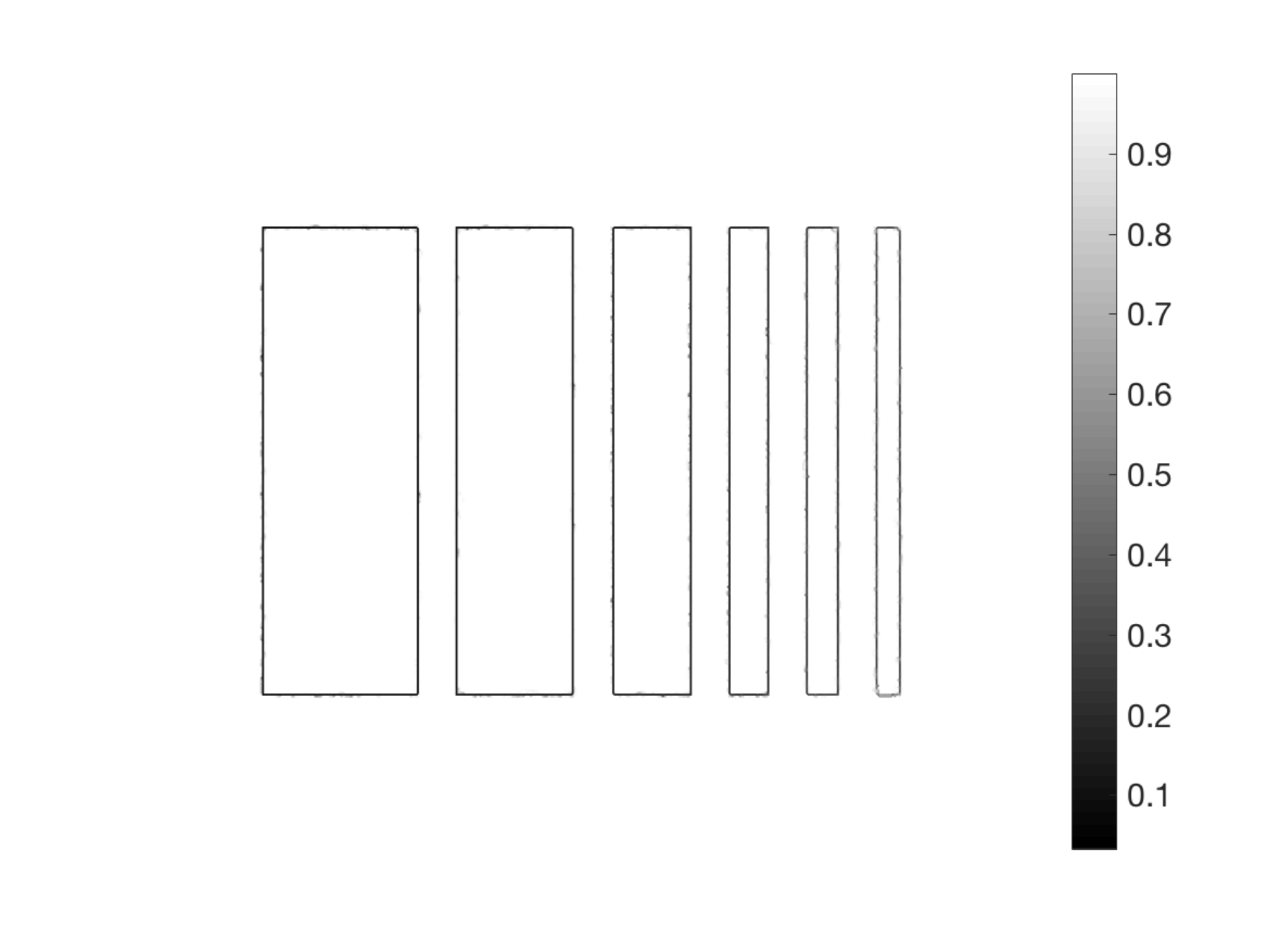}
\includegraphics[width=0.49\textwidth]{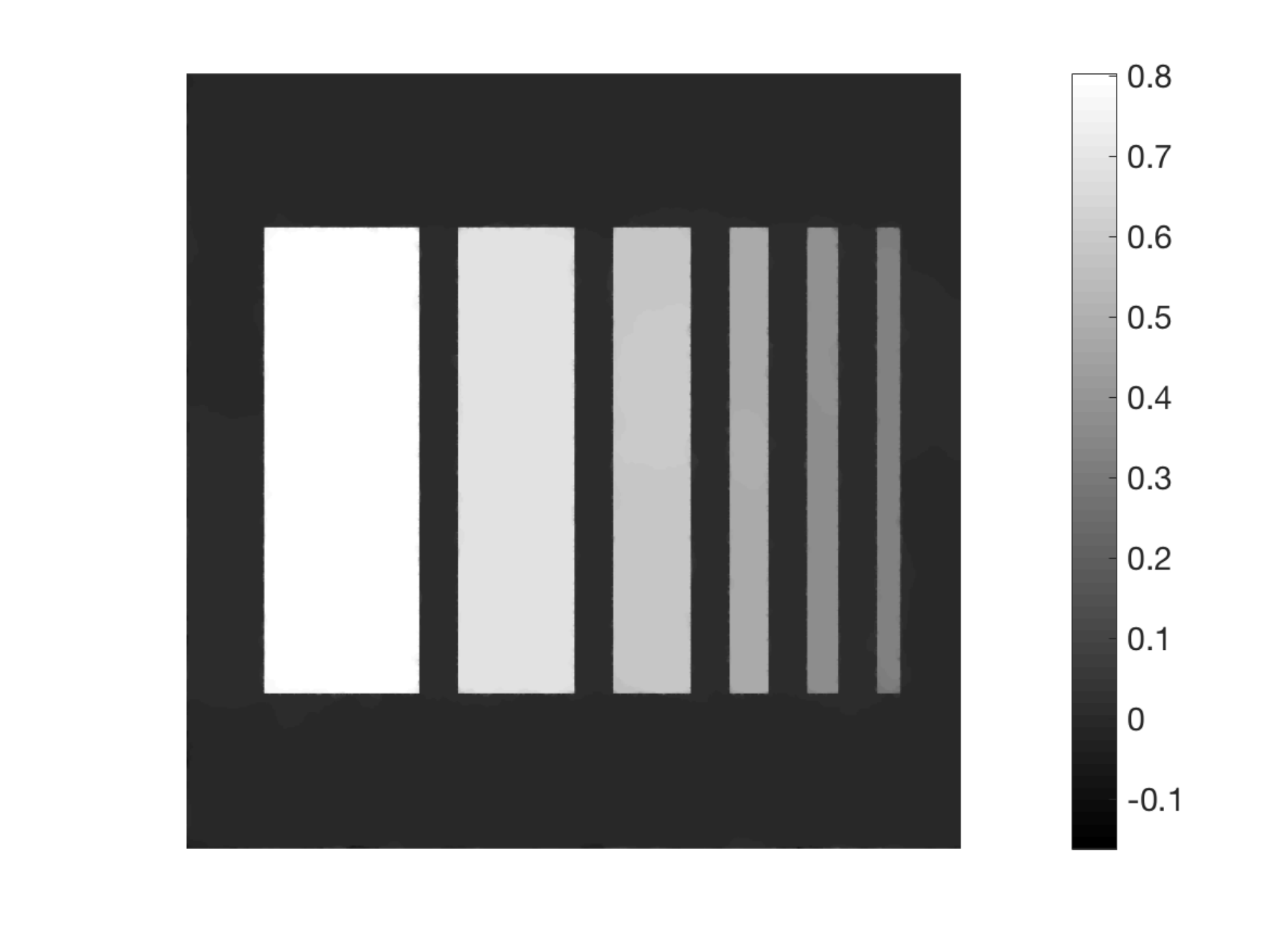}
\includegraphics[width=0.49\textwidth]{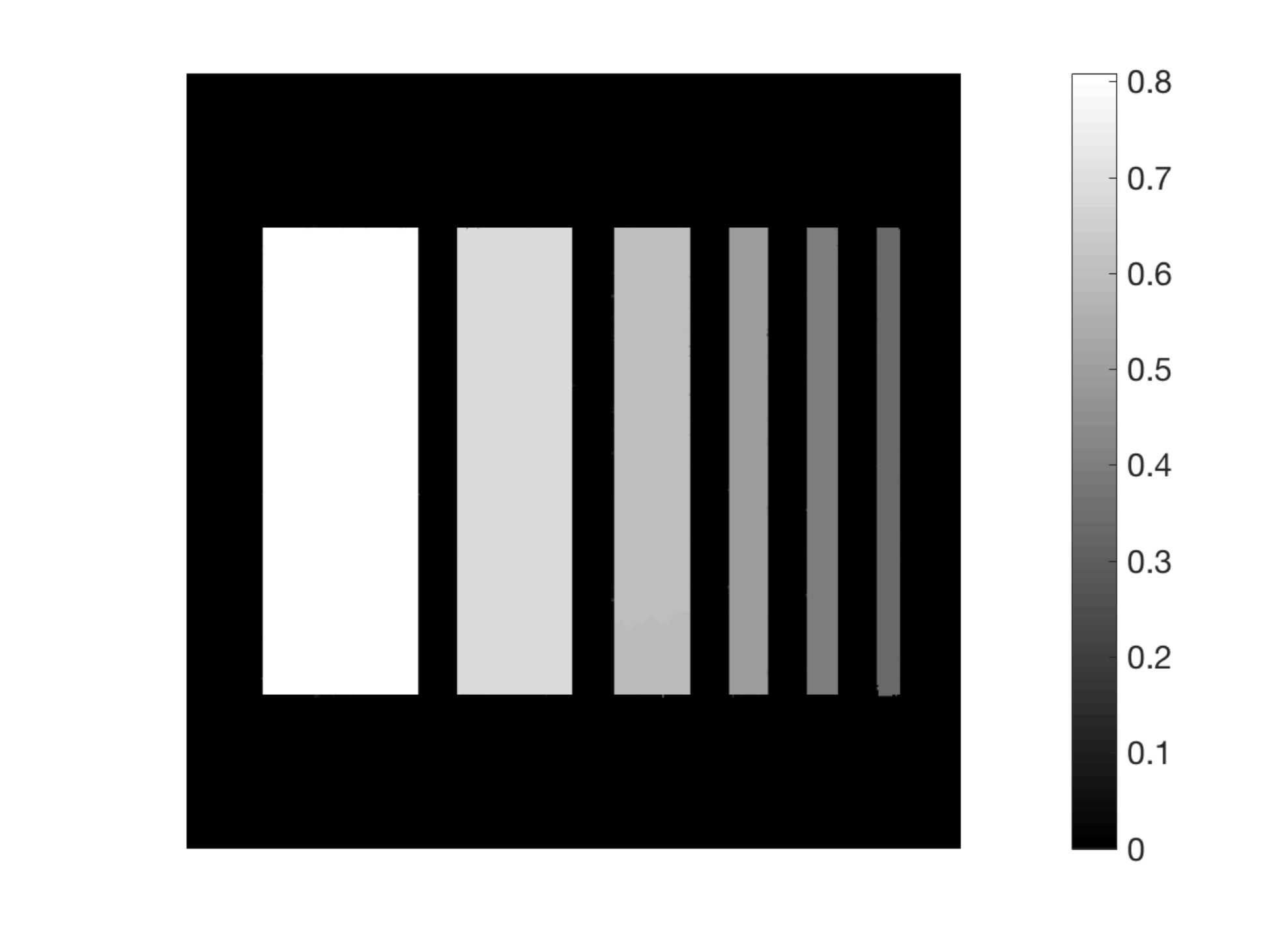}
\caption{\label{f:ex_2_sigma10}
Example 2 ($\sigma=0.1$). Top row (from left to right): original and noisy images, respectively. Middle row: $u_{\rm tv}$ (left) from Step~\ref{step1} in Algorithm~\ref{Algorithm} and the corresponding $s$  (right) from Step~\ref{step3}.
Bottom row: reconstruction using  total variation with optimized $\zeta>0$ (left) and our approach (right), respectively.}
\end{figure}

\begin{figure}[h!]
\centering
\includegraphics[width=0.49\textwidth]{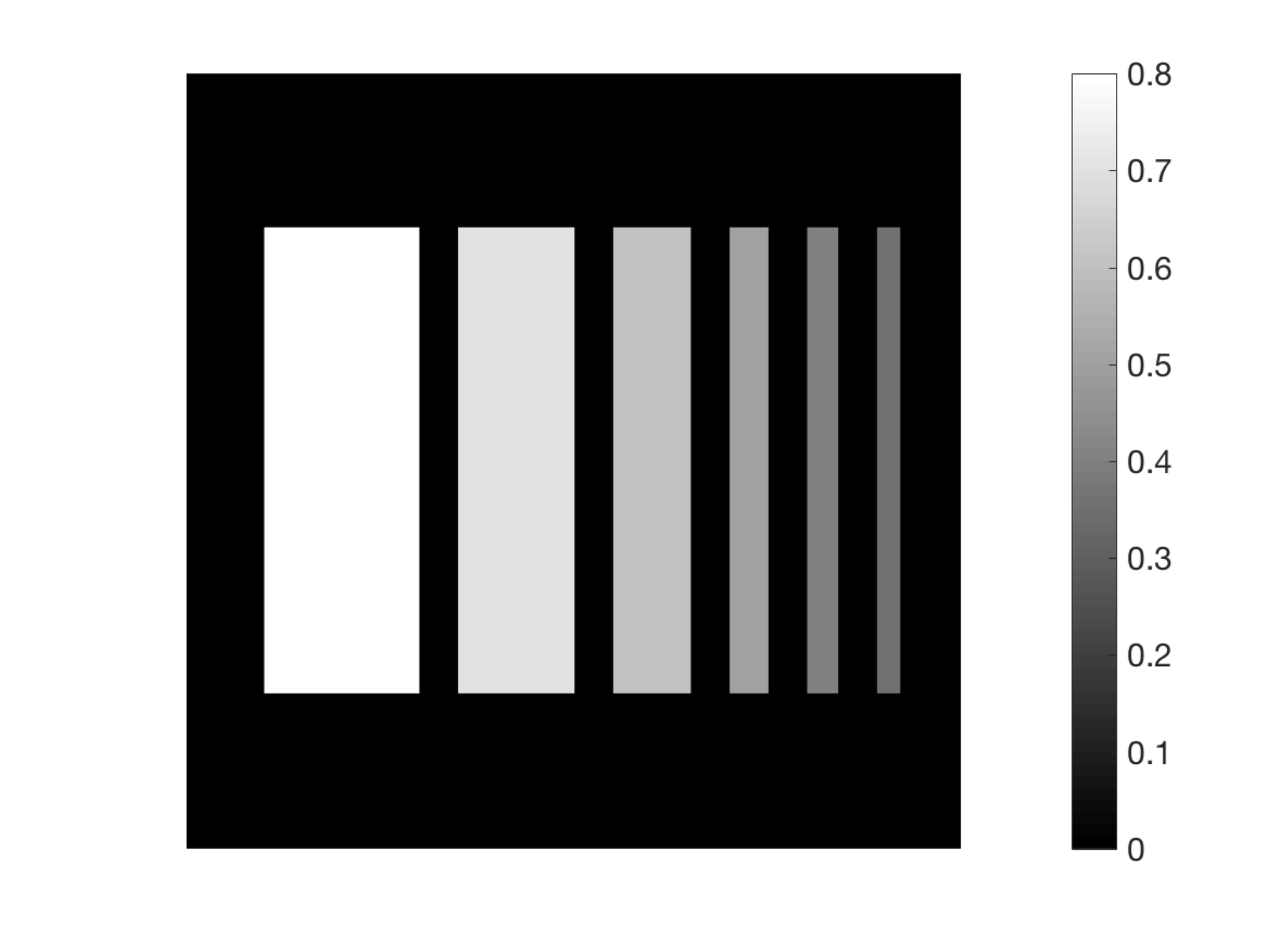}
\includegraphics[width=0.49\textwidth]{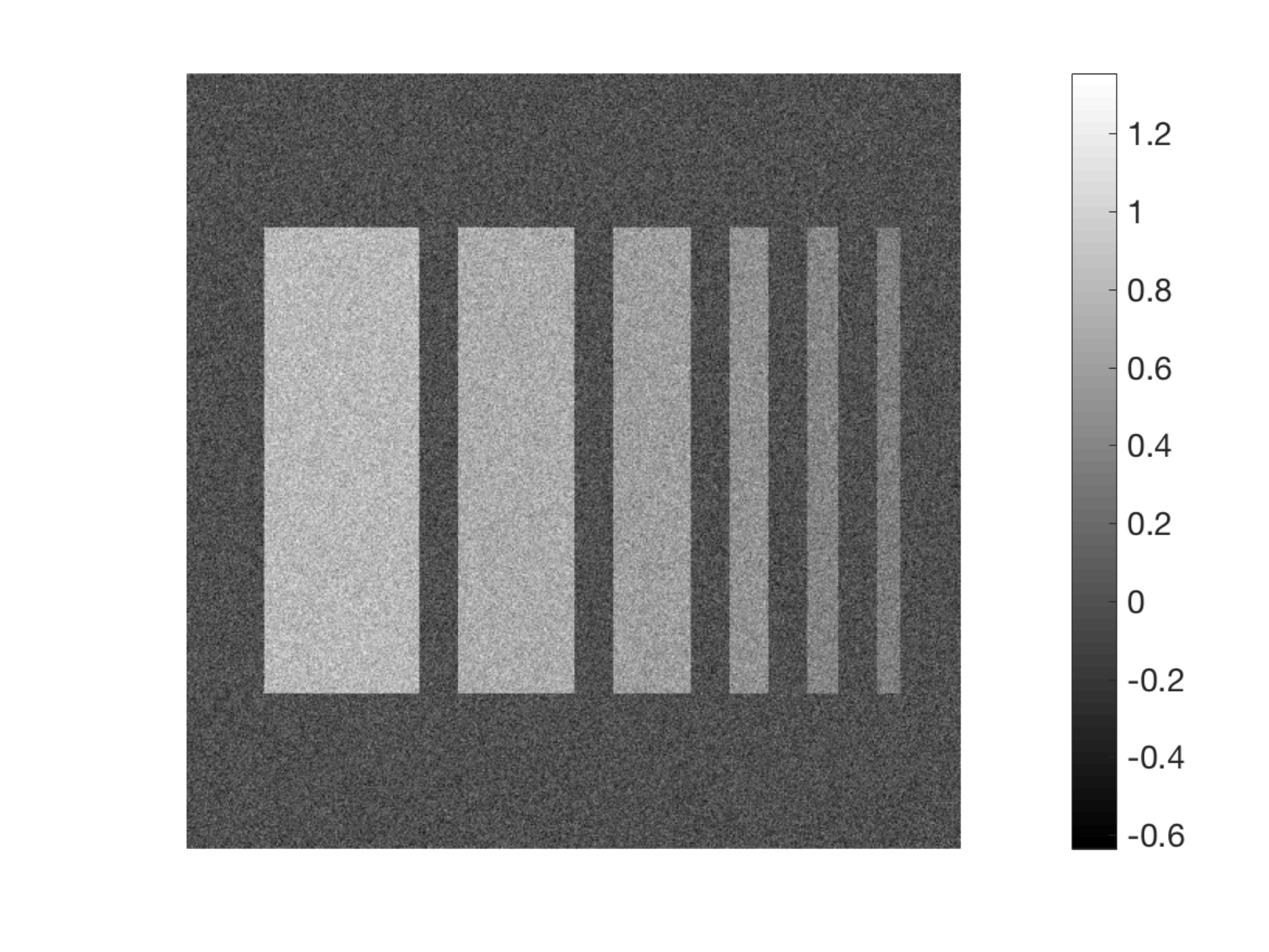}
\includegraphics[width=0.49\textwidth]{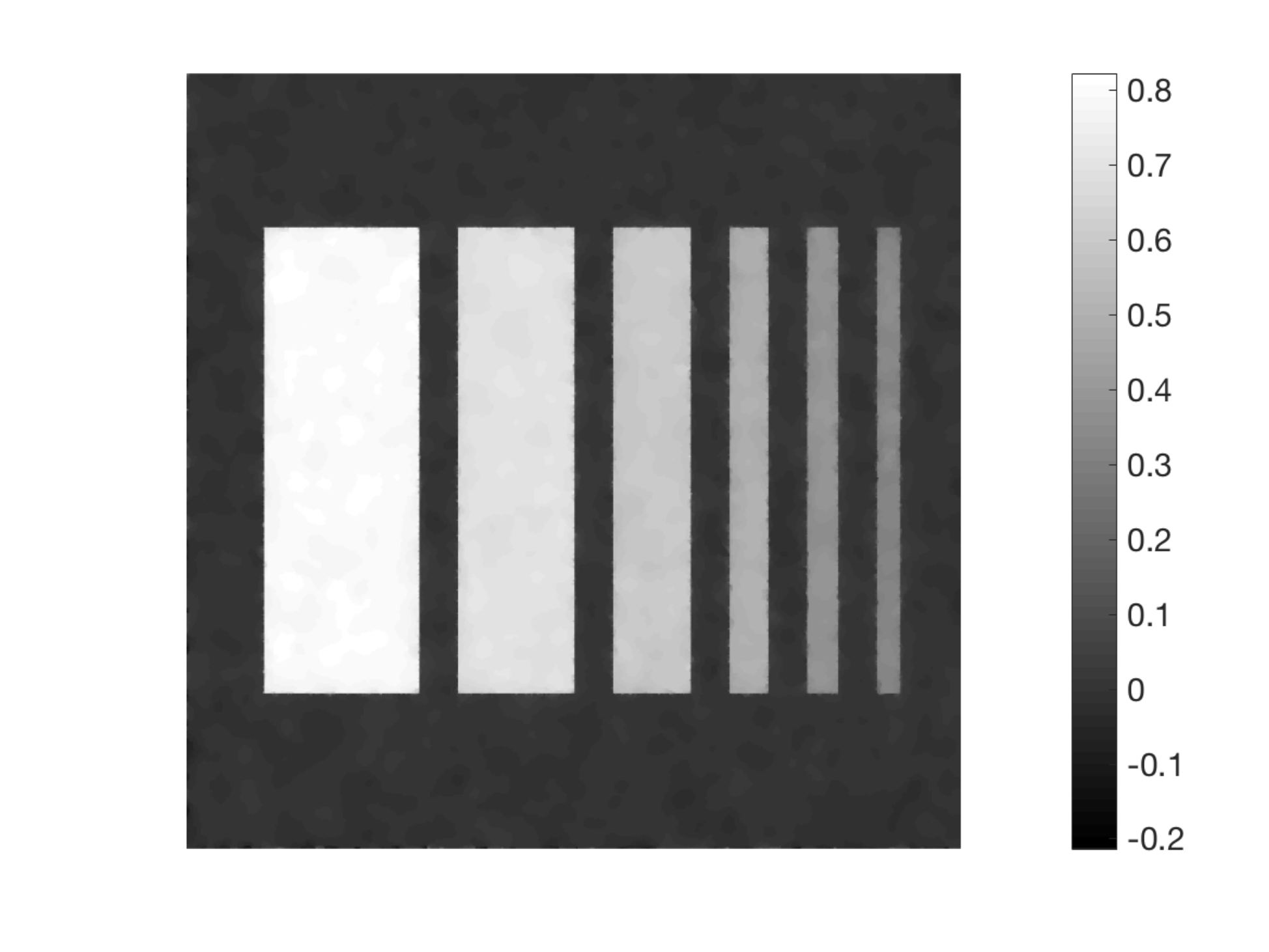}
\includegraphics[width=0.49\textwidth]{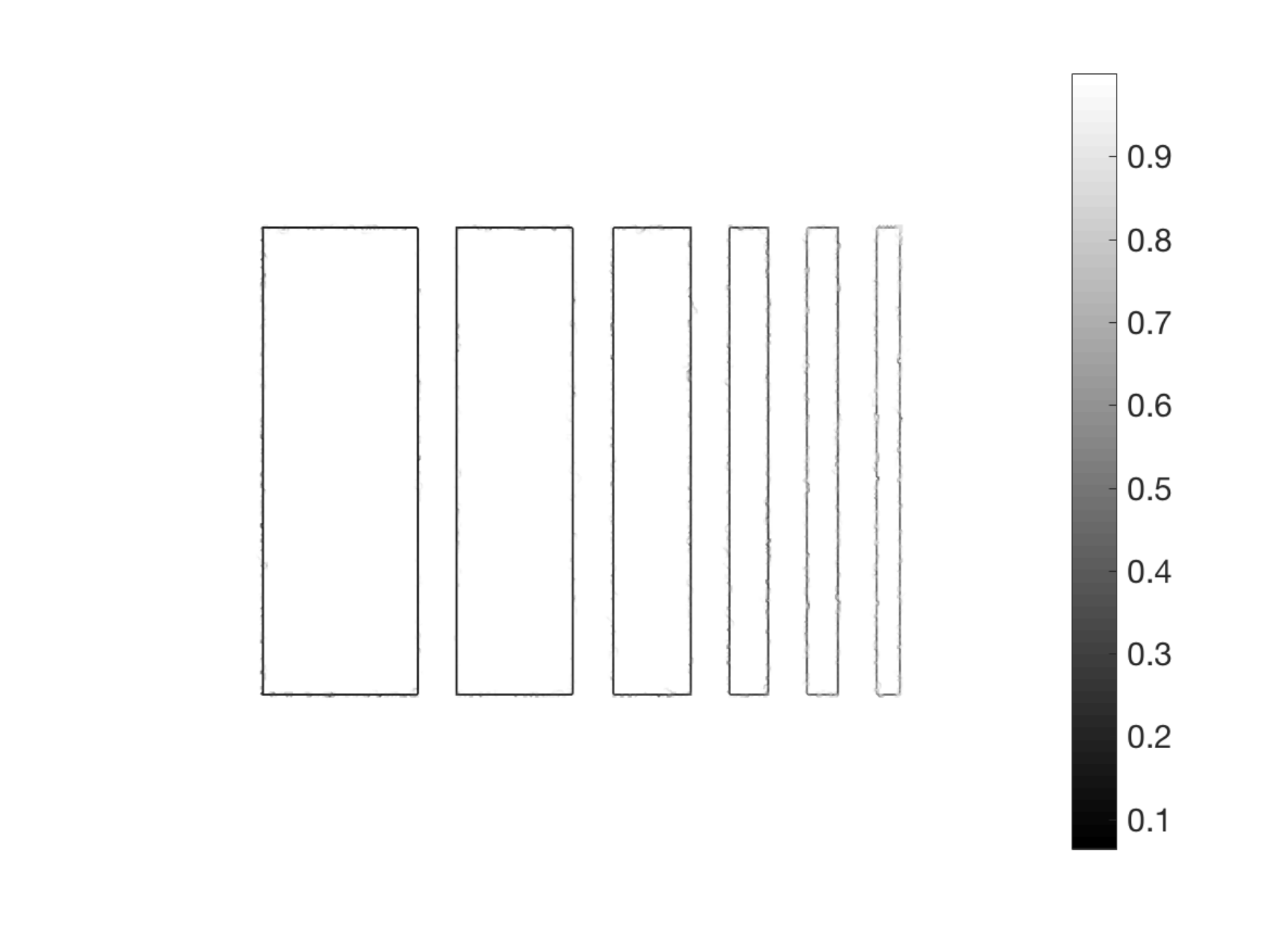}
\includegraphics[width=0.49\textwidth]{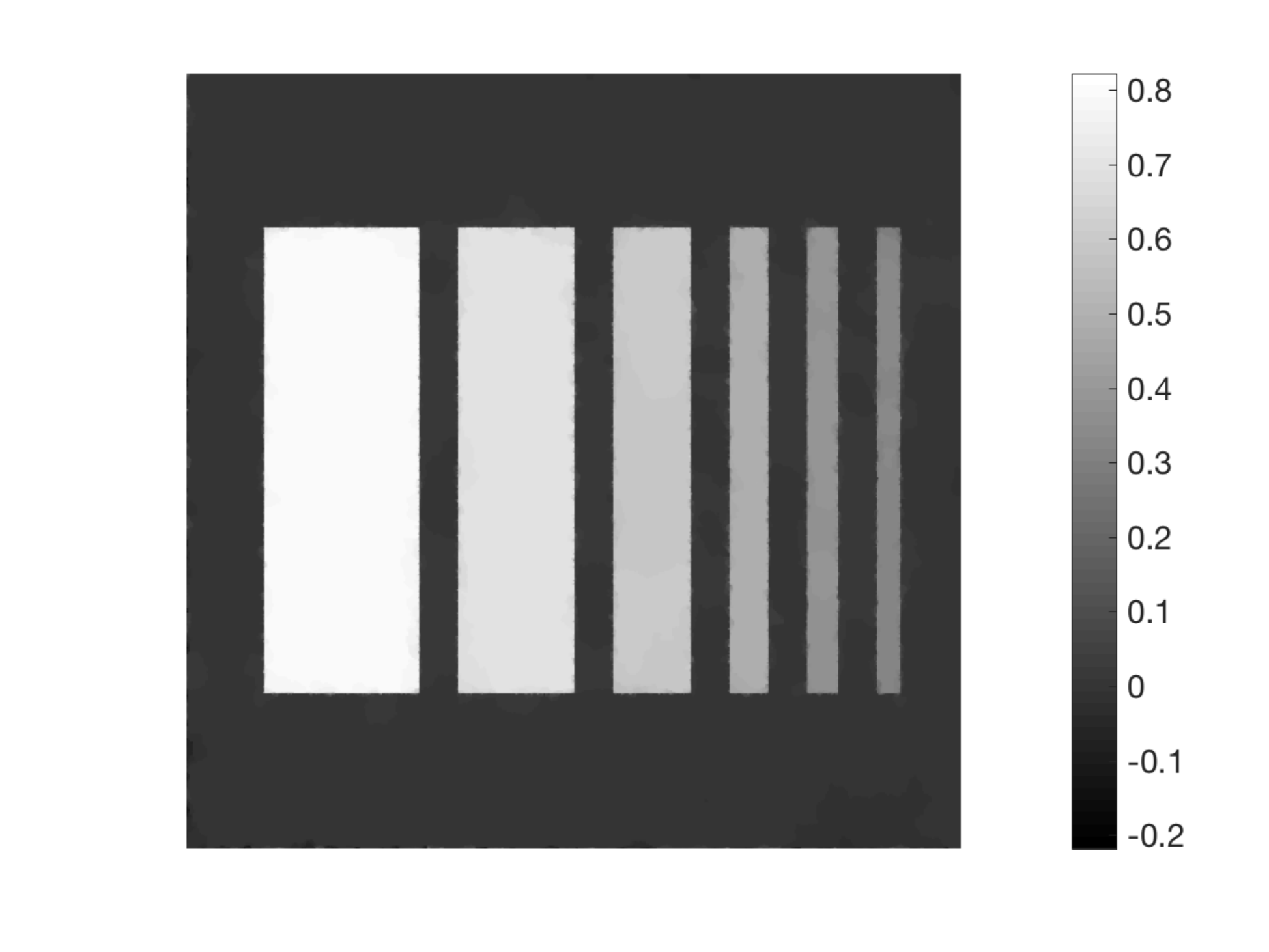}
\includegraphics[width=0.49\textwidth]{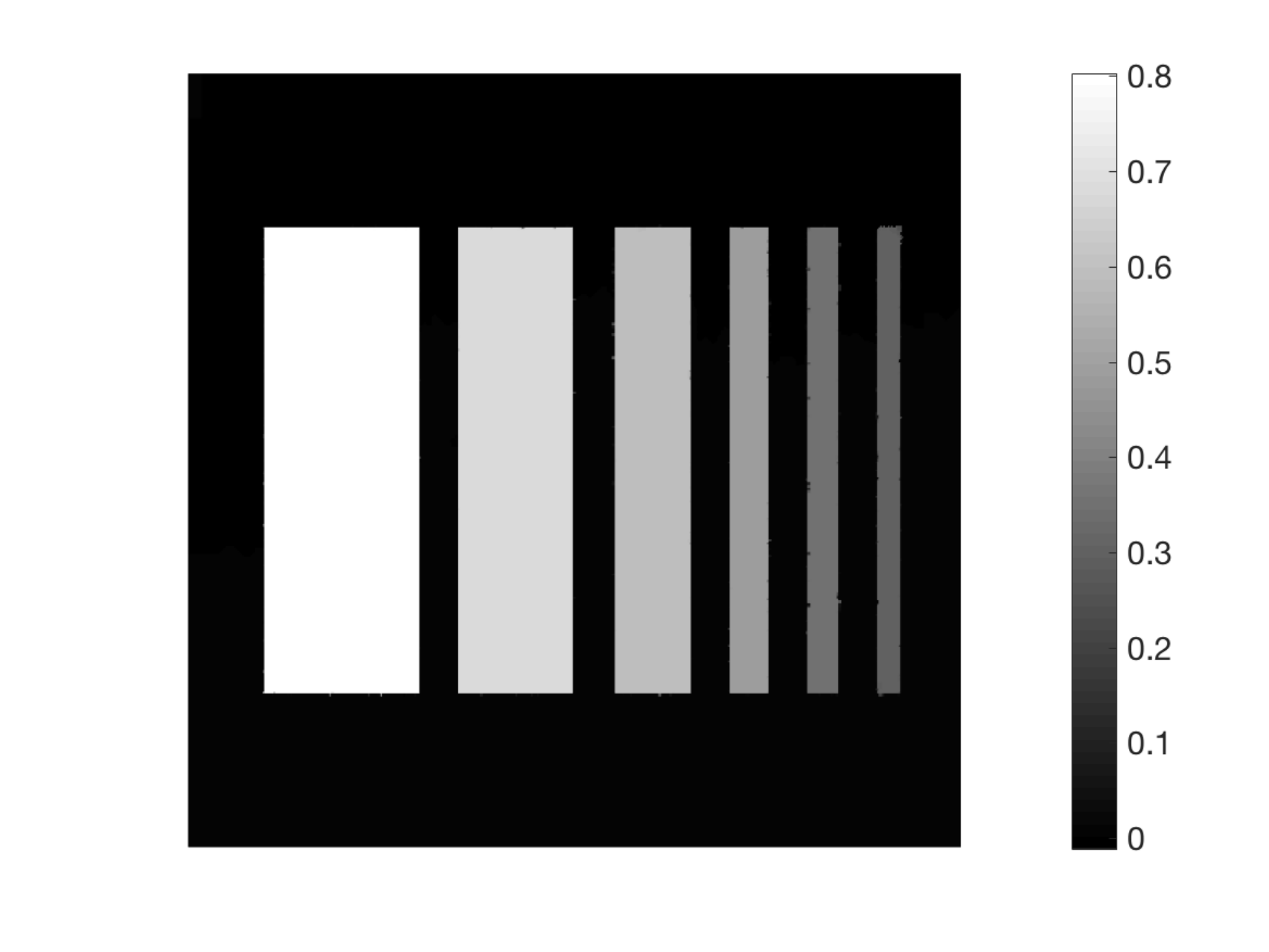}
\caption{\label{f:ex_2_sigma15}
Example 2 ($\sigma=0.15$). Top row (from left to right): original and noisy images, respectively. Middle row: $u_{\rm tv}$ (left) from Step~\ref{step1} in Algorithm~\ref{Algorithm} and the corresponding $s$  (right) from Step~\ref{step3}.
Bottom row: reconstruction using  total variation with optimized $\zeta>0$ (left) and our approach (right), respectively.}
\end{figure}

\subsection{Example 3: cameraman}

In the first two examples we considered synthetic images. In our
final example we consider a more realistic situation. We consider
a prototypical image of the cameraman (cf. Figure~\ref{f:ex_3_sigma10}). As in the previous examples 
we again consider two additive noise levels with standard deviations $0.1$ and $0.15$ respectively. Our results are shown in Figures~\ref{f:ex_3_sigma10} and \ref{f:ex_3_sigma15}, respectively

At first we apply the Algorithm~\ref{Algorithm} to find $S$. Here we have set the underlying parameters as 
$\zeta = 0.2$, 
${\rm tol}_{\rm tv} = 10e$-4, 
$N_{\rm refine} = 8$,
$\lambda = 0.7$, 
$\beta = 0.99$,
$\nu = 20$.
We further set $\mu = 10^4$ in \eqref{eq:ext_var_trunc_disc}.
We then solve for $V$ and we call $\tr V$ as our reconstruction. 
A comparison between PSNR and SSIM is shown in Table~\ref{t:ex_3}.
As we noticed in the previous examples we again obtain better reconstructions using our approach.

\section{Conclusion and further directions}
A new variational model associated to the fractional Laplacian was introduced. In particular, we have identified a weighted Sobolev space with respect to the weight $w=y^{1-2s(x)}$ appropriate for the treatment of the problem. We have shown that in general these weights are not of Muckenhoupt type, and have established that the trace space embeds in an $s-$weighted Lebesgue space. We have provided a discretization method for the full problem, and an algorithm for its resolution that also builds a selection procedure for $s$. The full scheme is advantageous when it comes to recovery of discontinuous features, details, homogeneous regions, and also for contrast preservation, in data perturbed by additive noise.

Future research directions are multiple, we enumerate some of them. 

1) The study of the optimization problem \eqref{1st}, seems to be the first step for the identification of a possible definition for $(-\Delta)^{s(x)}$. Such a task does not seem directly approachable via spectral or functional calculus points of view.

2) Full characterization of the trace space for $H$ and $H_0$. We have identified the embedding of $\tr H $ into $L^2(\Omega;s(x)^2)$, but it would be of interest to understand the Sobolev regularity of $\tr H$.

3) Differentiability and stability properties of the solution to \eqref{1st} with respect to $s$. For the optimal selection of $s$,  
it is required to identify a topology over an admissible set for $s$, so that solutions to \eqref{1st}, are stable with respect to perturbations. This seems like a complex task where the usual obstacles from homogenization and convergence of differential operators are present. In addition, difficulty is increased as the solutions to \eqref{1st} belong to a state space depending on $s$ as well. The differentiability issue is further more complex, but such a study will be the first step in establishing stationarity systems useful for implementation in the optimal selection of $s$.

4) The extension of the presented methods to a general class of inverse problems. Problem \eqref{1st}, admits the following generalization
\begin{equation*}
\min_{u\in H(\mathcal{C}; y^{1-2s(x)})} \int_{\C}y^{1-2s(x)} \left(\theta |u|^2 + |\nabla u|^2 \right) \dif x\dif y+ \frac{\mu}{2}\int_{\Omega} s(x)^2 |K(\tr u) - f| ^2  \dif x,
\end{equation*}
where $K$ is a bounded linear operator on $L^2(\Omega; s(x)^2)$. This would allow to deal with the problem of finding $y$ such that $Ky=f$. In this case, the choice of $s$ could be simply associated with regions of the domain $\Omega$ where it is more important to recover $y$ more accurately.

\begin{table}[h!]
\centering
\begin{tabular}{|l|l|l|l|l|} \hline 
$\sigma$  &  PSNR (TV)  & PSNR (New) & SSIM (TV)   & SSIM (New)  \\ \hline
 0.1      & 2.7054e+01  & 2.9640e+01 & 8.0475e-01  & 8.3653e-01  \\ \hline 
 0.15     & 2.4376e+01  & 2.6884e+01 & 7.4340e-01  & 7.9335e-01  \\ \hline
\end{tabular}
\caption{\label{t:ex_3}Example 3:PSNR and SSIM using two 
different standard deviations ($\sigma = 0.1$ and $\sigma = 0.15$) using
TV and proposed scheme (New).}
\end{table}

\begin{figure}[h!]
\centering
\includegraphics[width=0.49\textwidth]{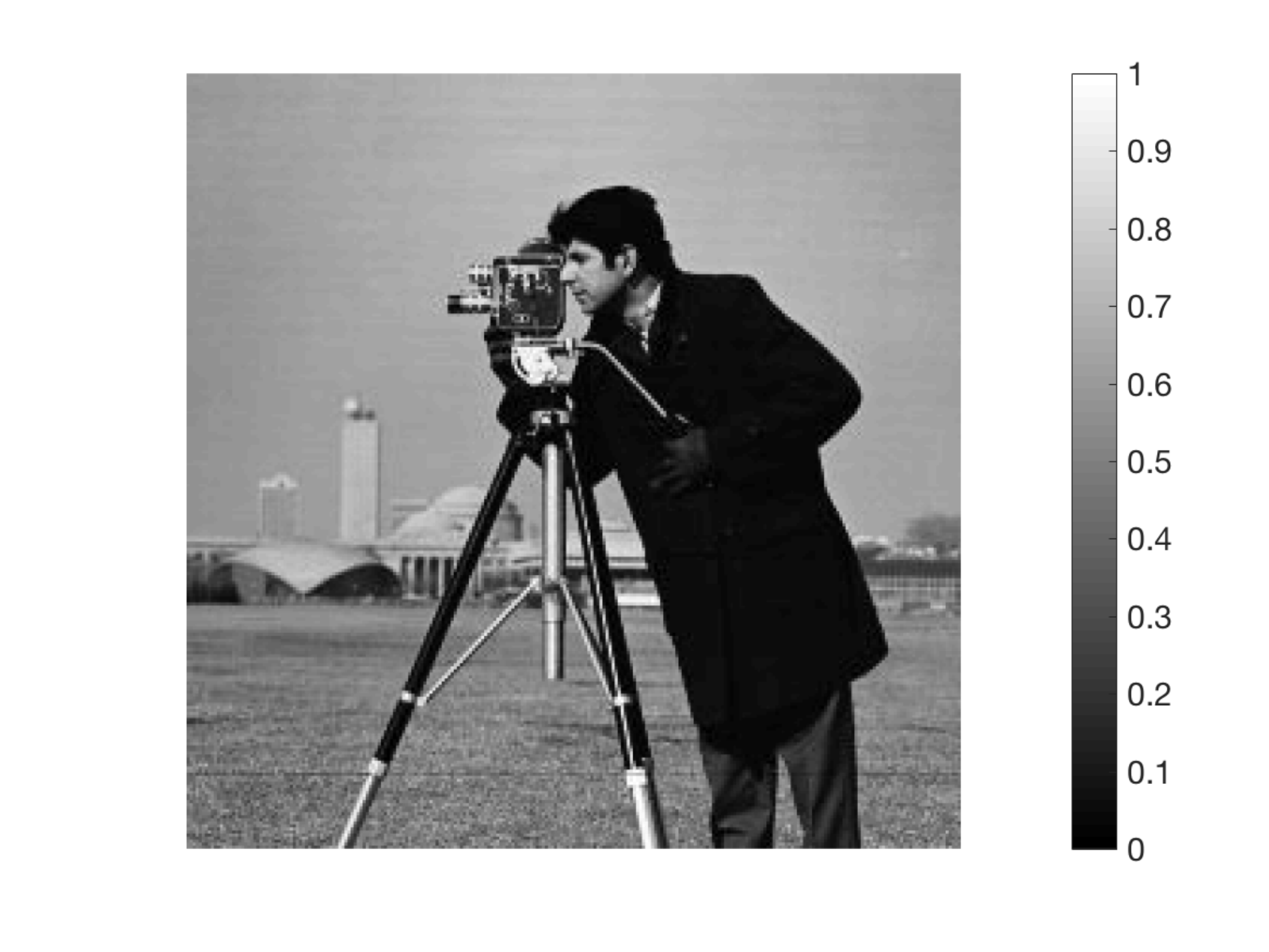}
\includegraphics[width=0.49\textwidth]{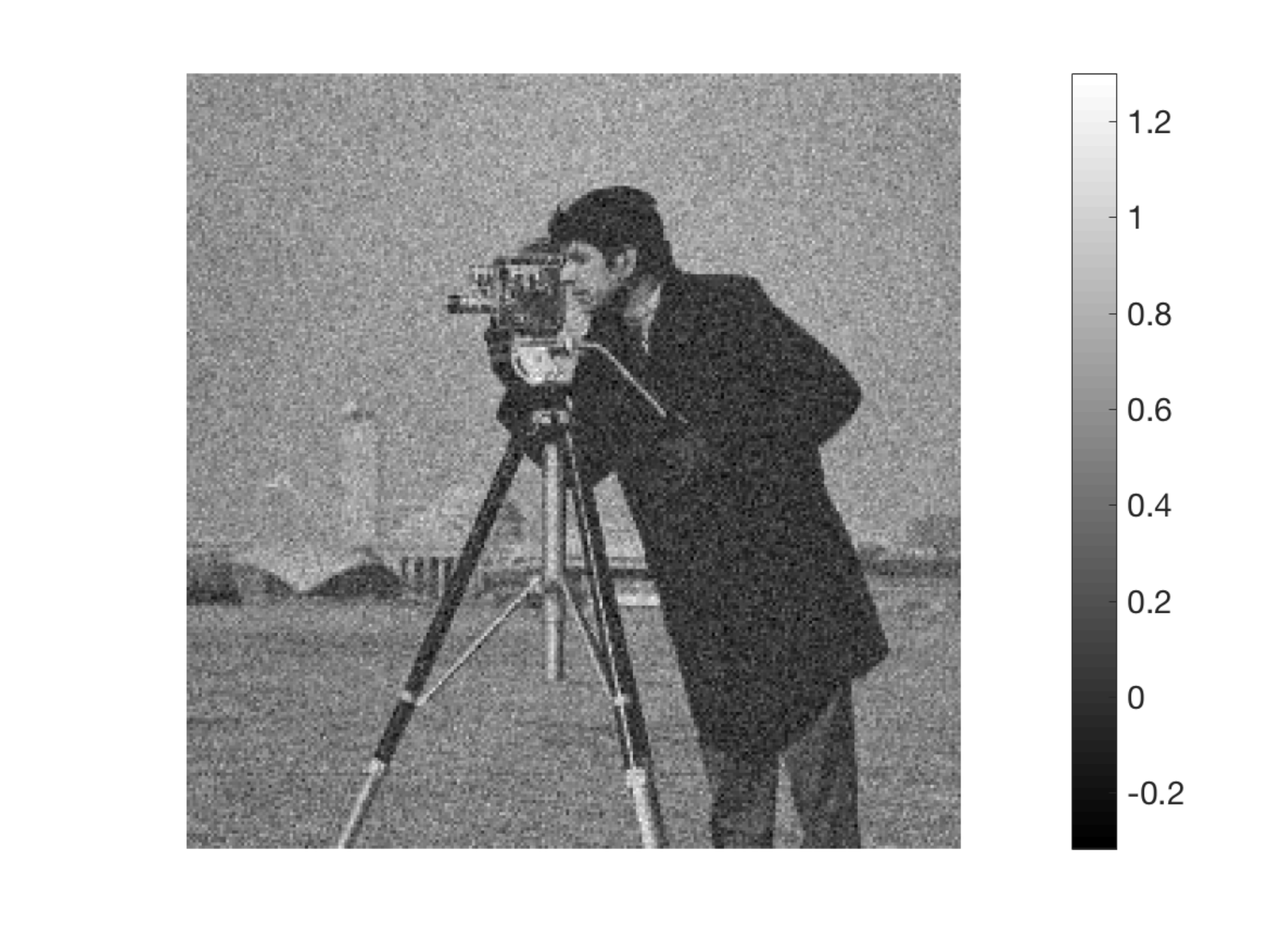}
\includegraphics[width=0.49\textwidth]{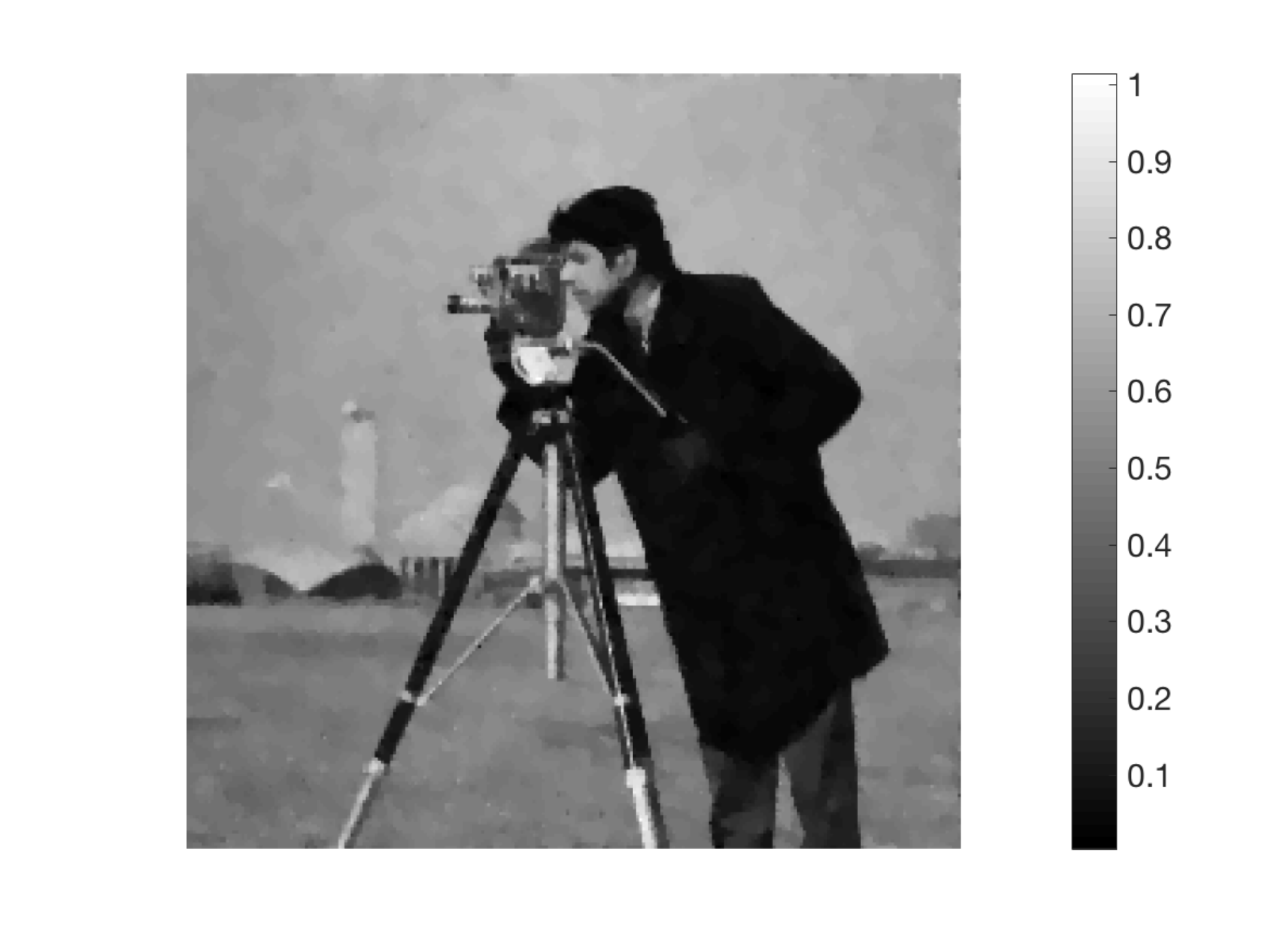}
\includegraphics[width=0.49\textwidth]{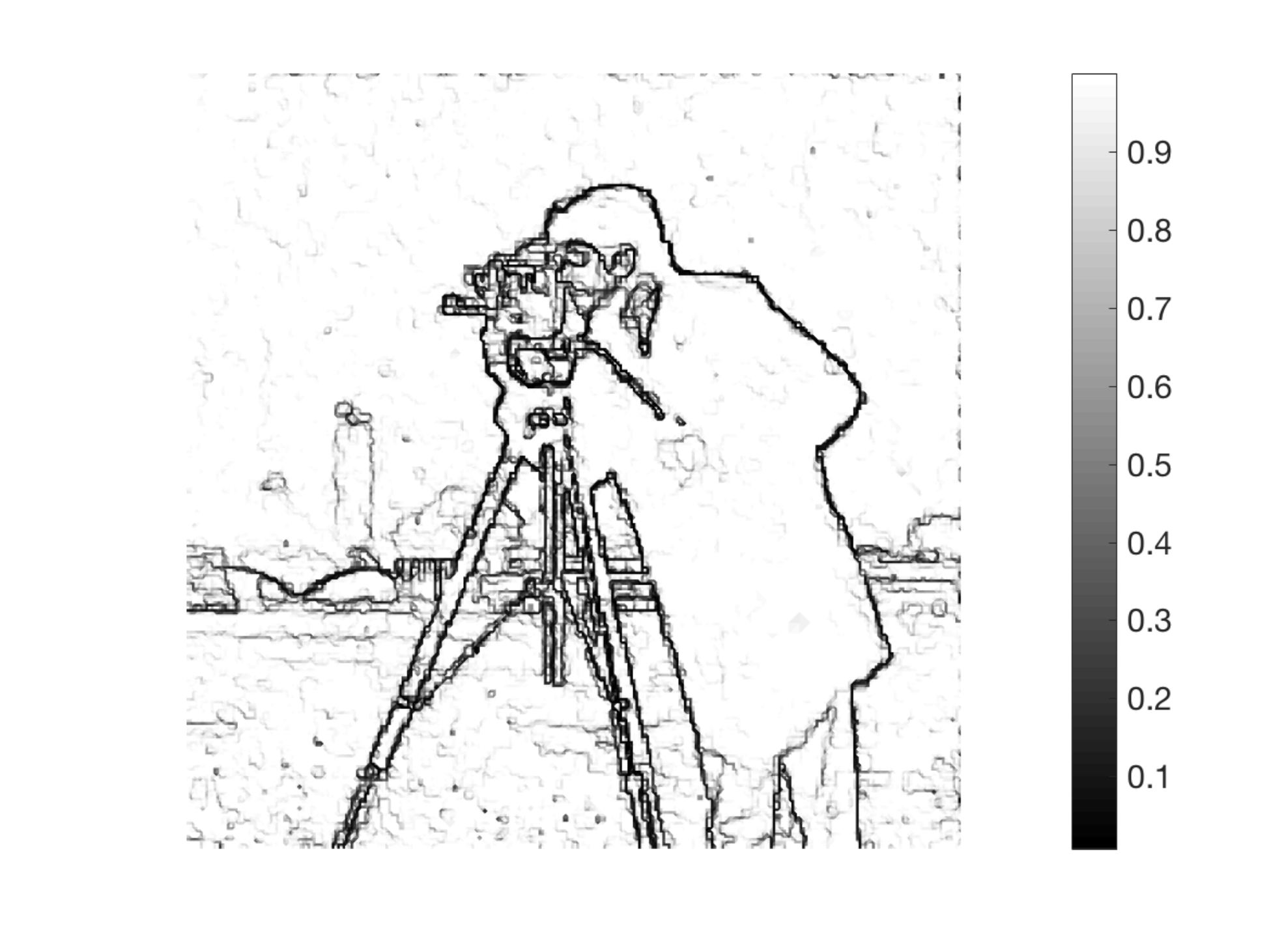}
\includegraphics[width=0.49\textwidth]{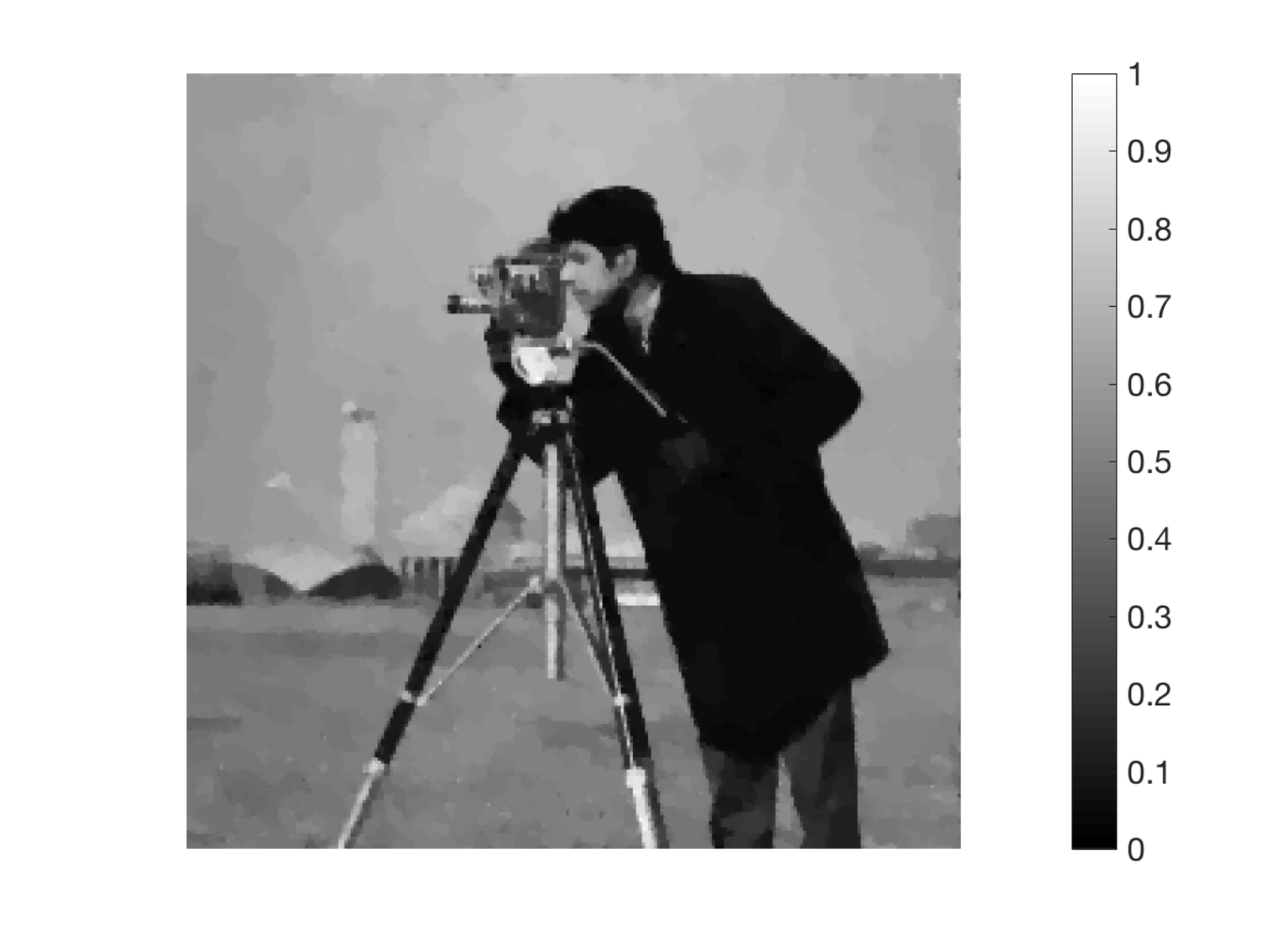}
\includegraphics[width=0.49\textwidth]{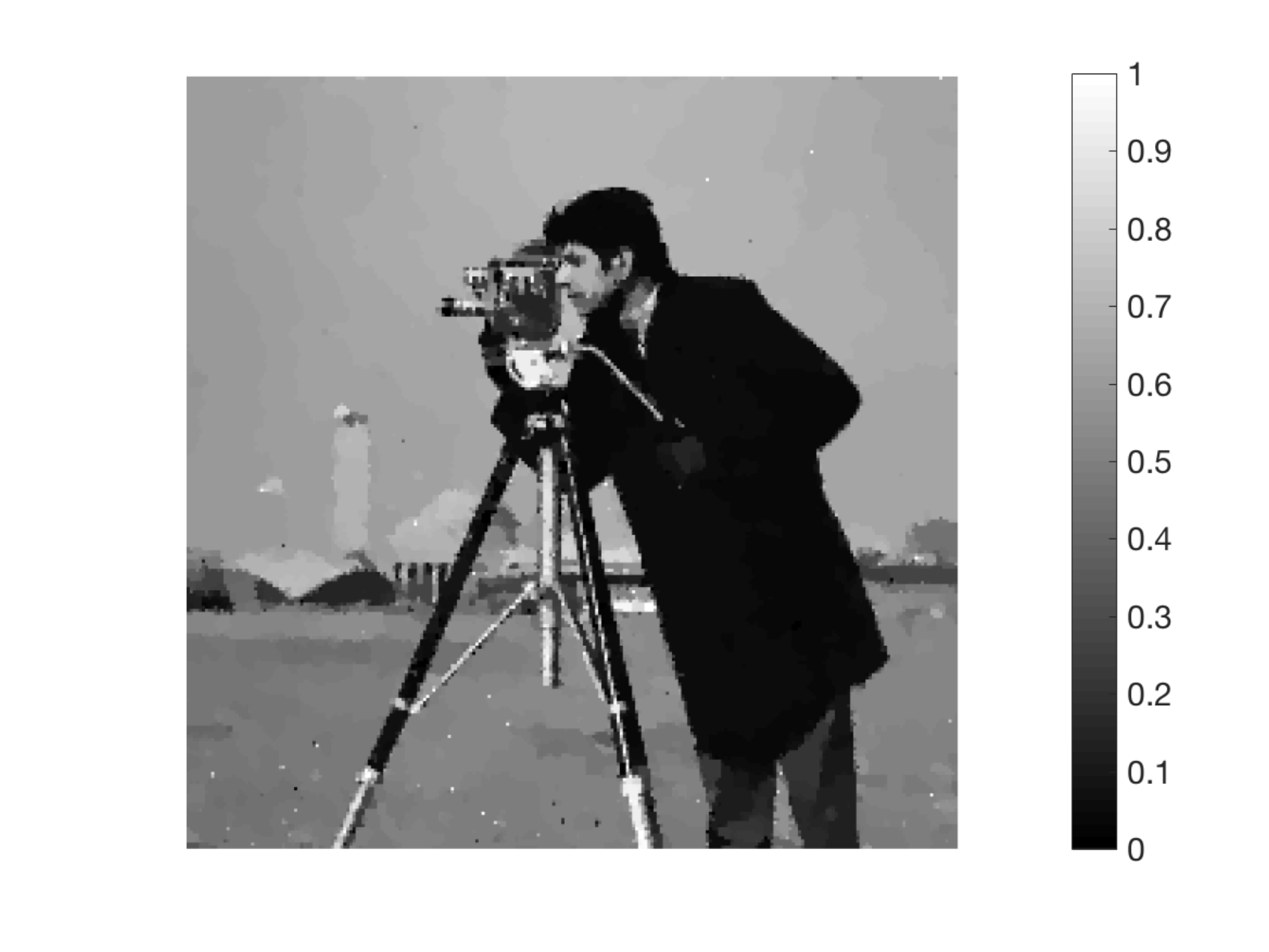}
\caption{\label{f:ex_3_sigma10}
Example 3 ($\sigma=0.1$). Top row (from left to right): original and noisy images, respectively. Middle row: $u_{\rm tv}$ (left) from Step~\ref{step1} in Algorithm~\ref{Algorithm} and the corresponding $s$  (right) from Step~\ref{step3}.
Bottom row: reconstruction using  total variation with optimized $\zeta>0$ (left) and our approach (right), respectively.}
\end{figure}

\begin{figure}[h!]
\centering
\includegraphics[width=0.49\textwidth]{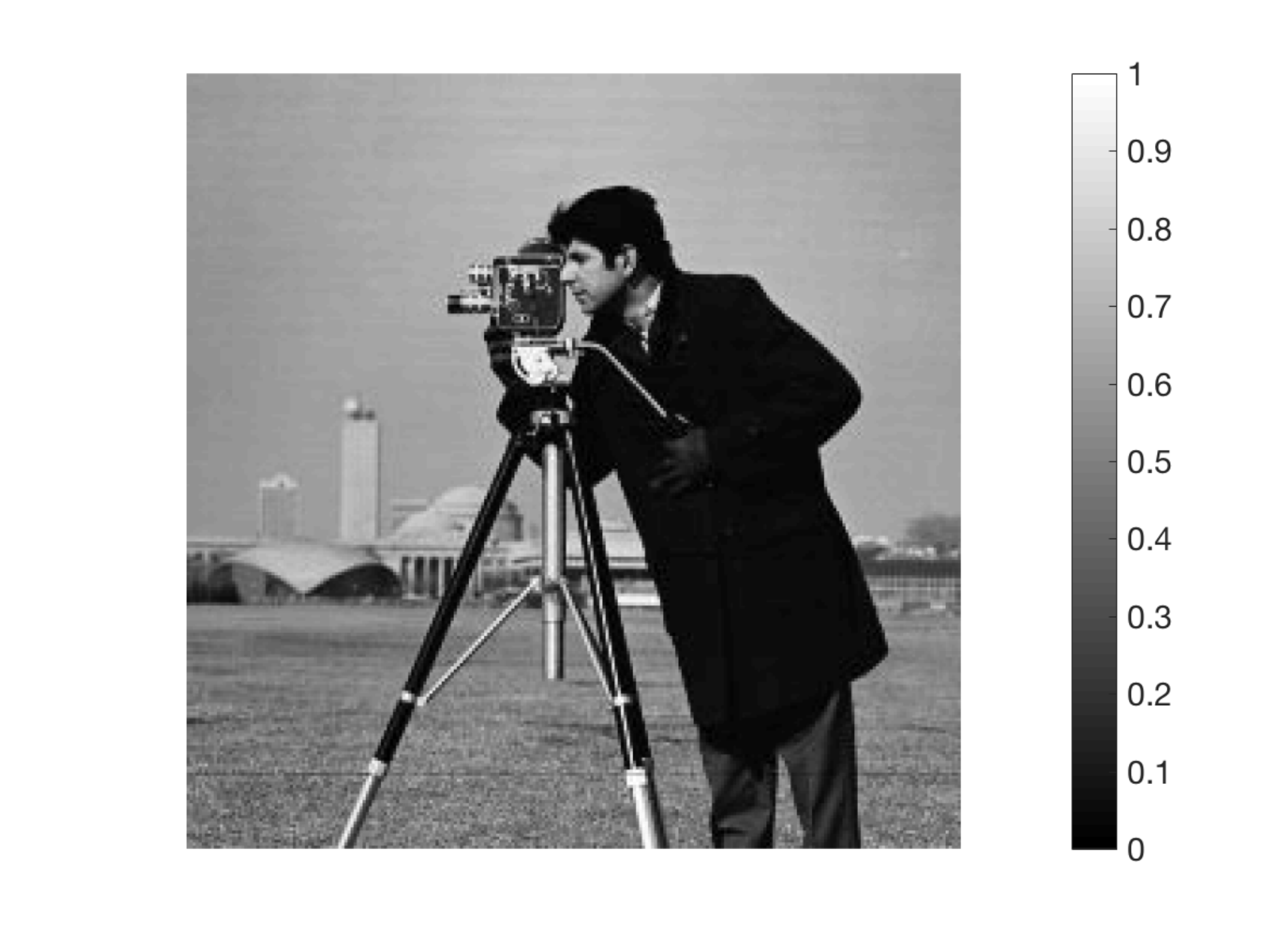}
\includegraphics[width=0.49\textwidth]{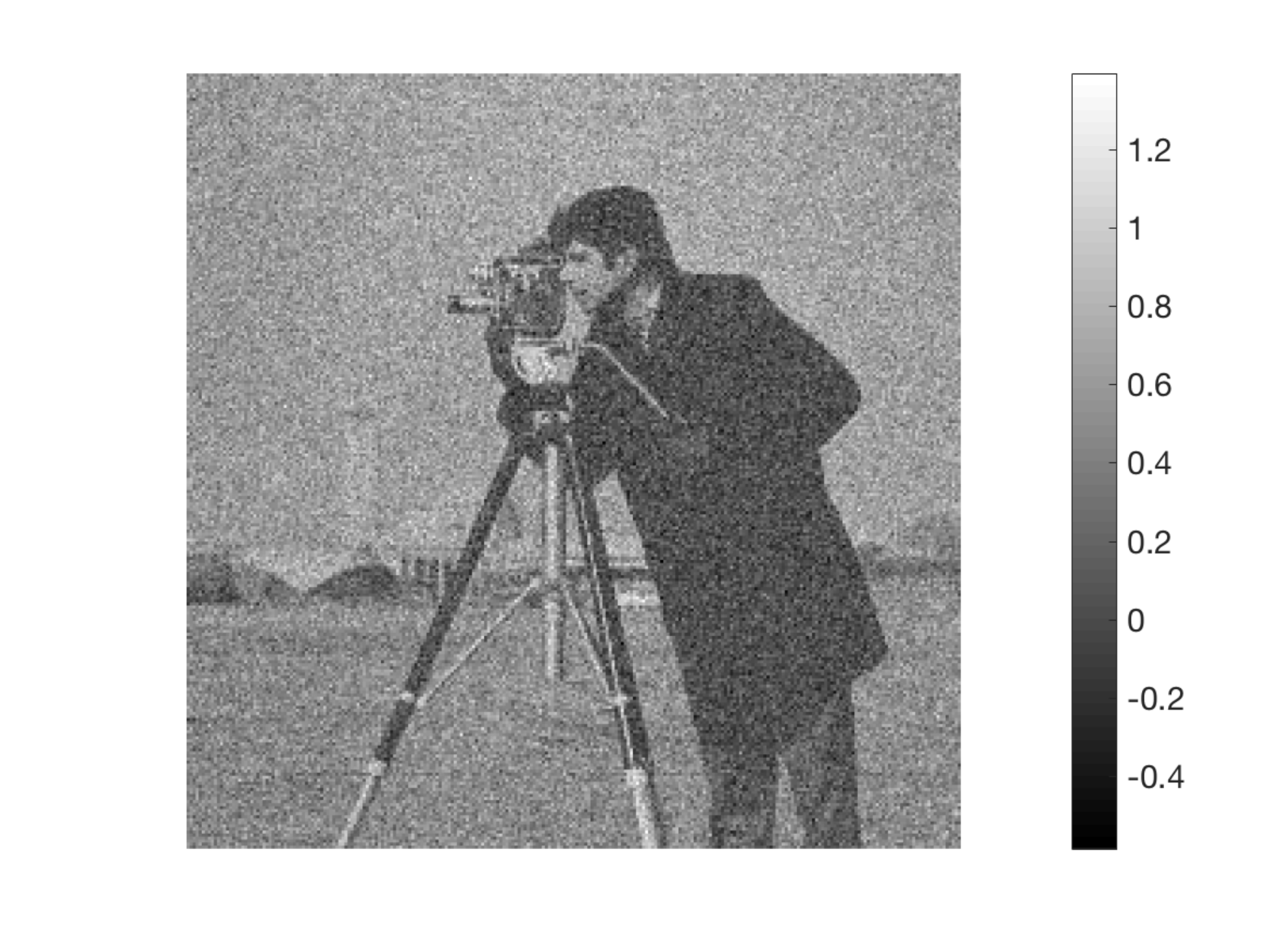}
\includegraphics[width=0.49\textwidth]{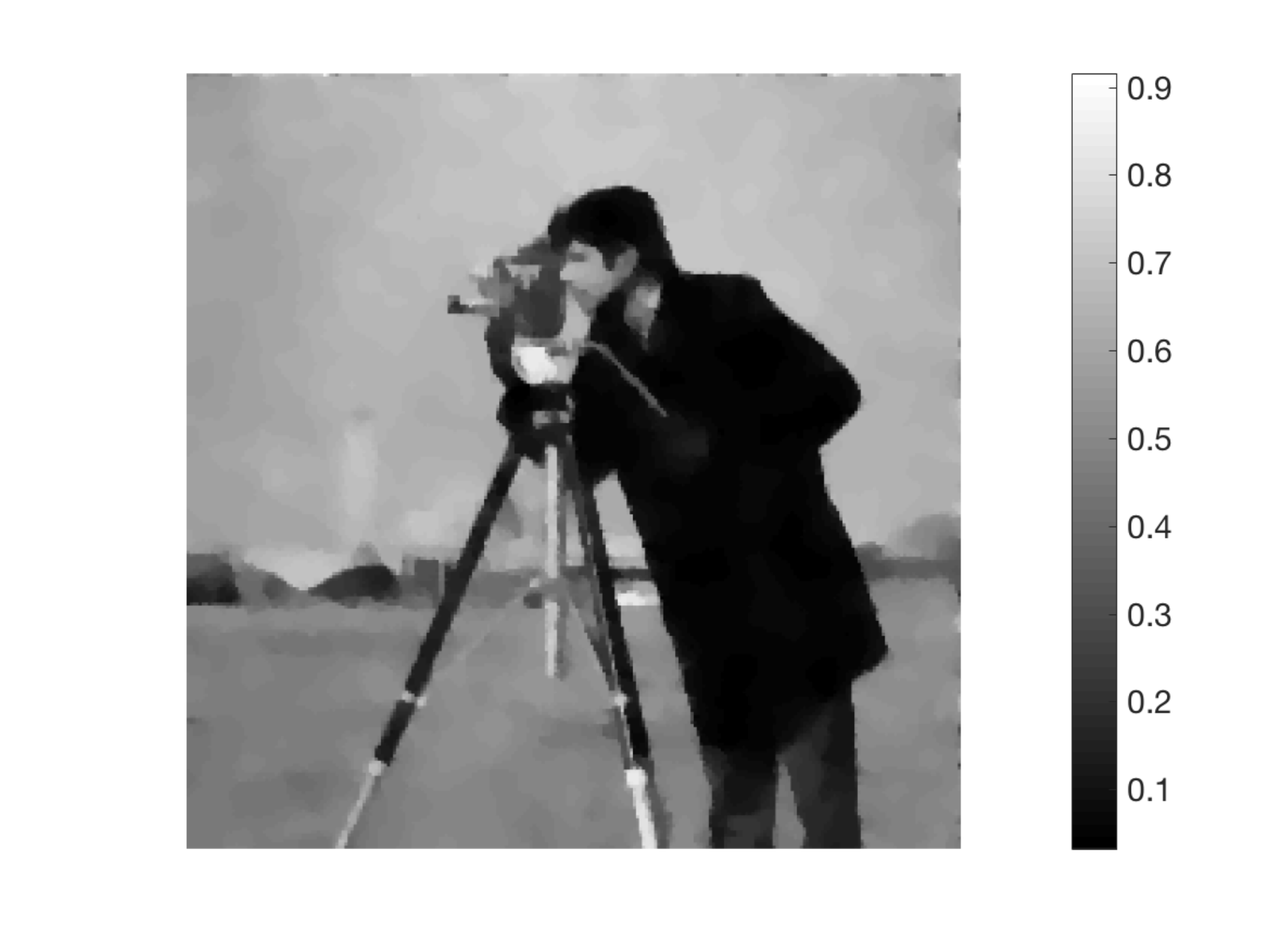}
\includegraphics[width=0.49\textwidth]{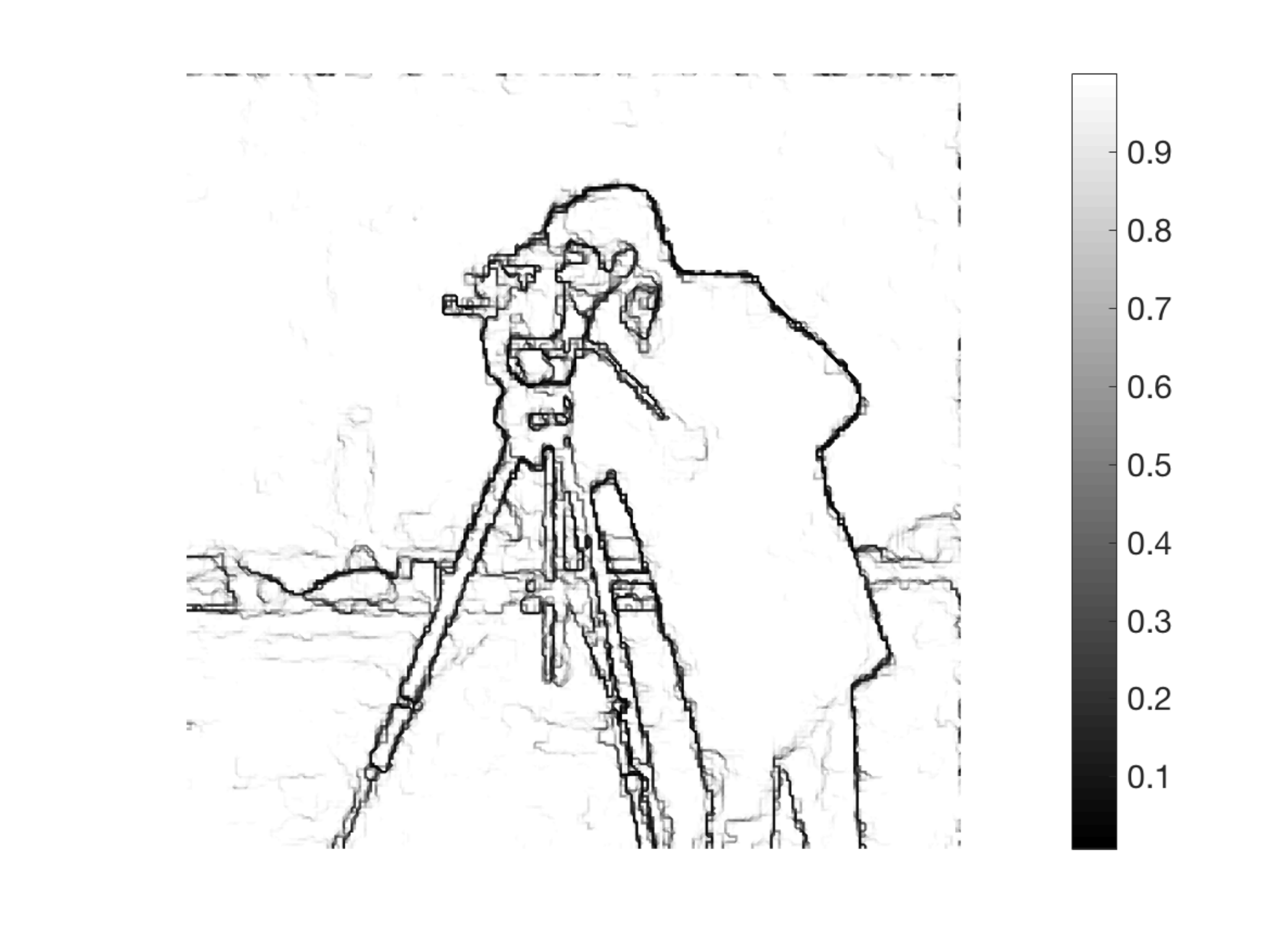}
\includegraphics[width=0.49\textwidth]{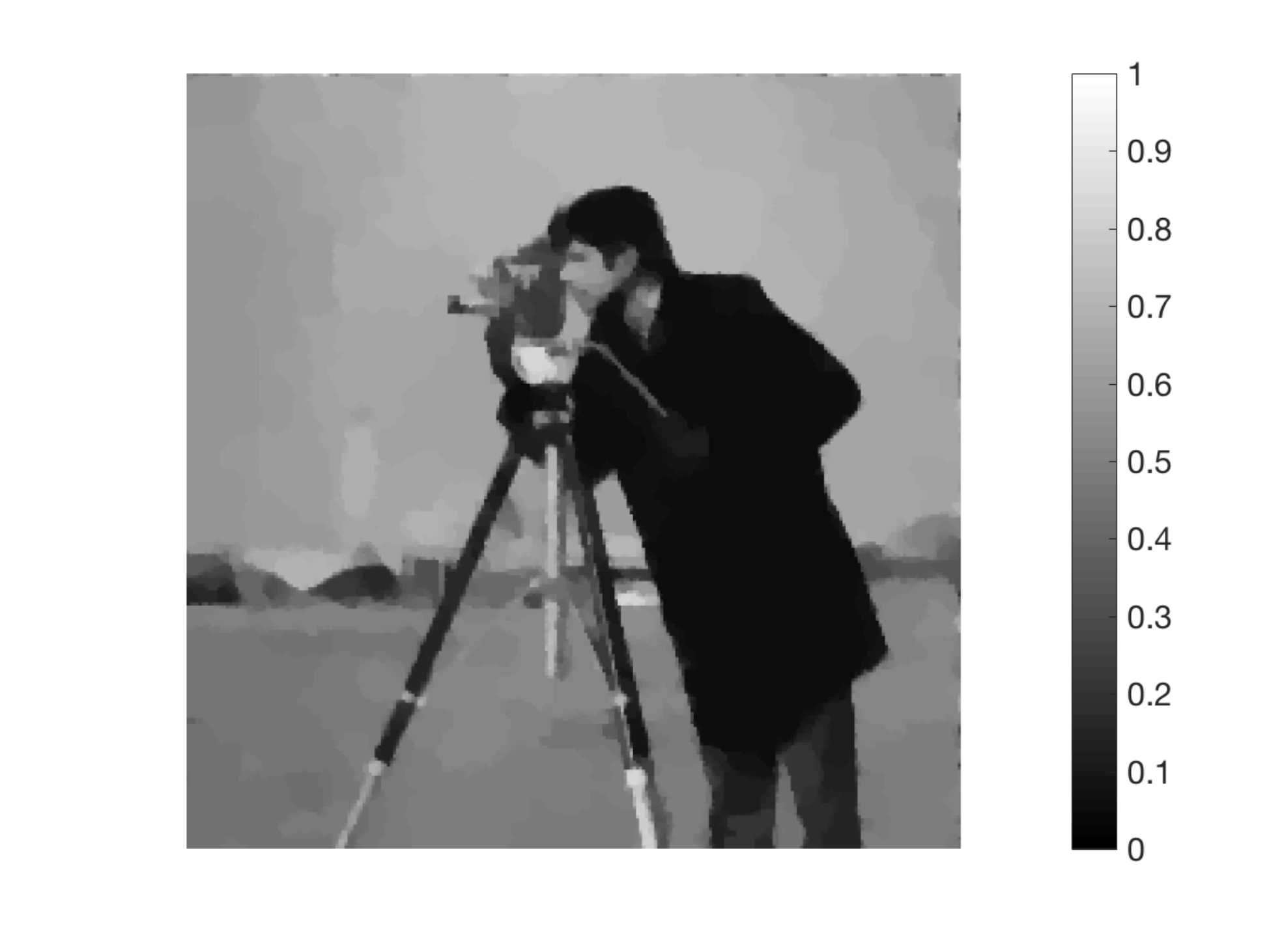}
\includegraphics[width=0.49\textwidth]{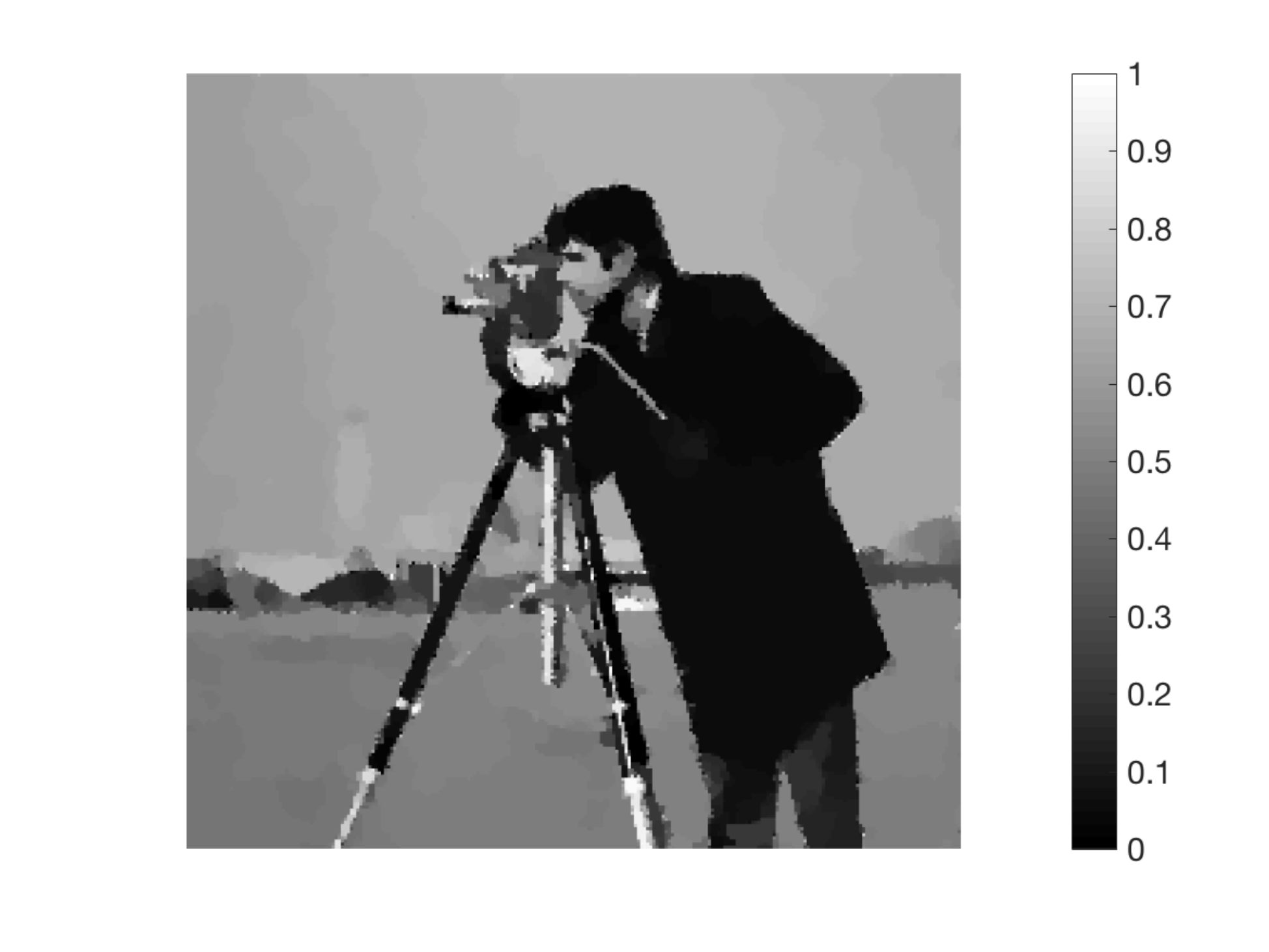}
\caption{\label{f:ex_3_sigma15}
Example 3 ($\sigma=0.15$). Top row (from left to right): original and noisy images, respectively. Middle row: $u_{\rm tv}$ (left) from Step~\ref{step1} in Algorithm~\ref{Algorithm} and the corresponding $s$  (right) from Step~\ref{step3}.
Bottom row: reconstruction using  total variation with optimized $\zeta>0$ (left) and our approach (right), respectively.}
\end{figure}

\section*{Acknowledgement}

We thank Gunay Dogan for point us to the edge error indicator and Pablo Raul Stinga
for sharing the reference \cite{ANekvinda_1993a}.

\bibliography{refs}
\bibliographystyle{plain}
\end{document}